\newtheorem{theorem}{Theorem}[section]
\newtheorem{definition}[theorem]{Definition}
\newtheorem{lemma}[theorem]{Lemma}
\newtheorem{proposition}[theorem]{Proposition}
\newtheorem{corollary}[theorem]{Corollary}
\newtheorem{remark}[theorem]{Remark}
\numberwithin{equation}{section}
\newcommand{\1}{{\text{\Large $\mathfrak 1$}}}
\title{On the intersection of critical percolation clusters and other tree-like random graphs}
\author{Amine Asselah}
\address{Universit\'e Paris-Est, LAMA, UMR 8050, UPEC, UPEMLV, CNRS, F-94010 Cr\'eteil}
\email{asselah@u-pec.fr}
\author{Bruno Schapira}
\address{Aix-Marseille Universit\'e, CNRS, I2M, UMR 7373, 13453 Marseille, France}
\email{bruno.schapira@univ-amu.fr}
\begin{document}
\maketitle 

\begin{abstract}
We study intersection properties of two or more independent tree-like random graphs. Our setting encompasses critical, possibly long range, Bernoulli percolation clusters, incipient infinite clusters, as well as critical branching random walk ranges. We obtain sharp excess deviation bounds on the  number of intersection points of two or more clusters, under minimal assumption on the two-point function.  
The proofs are based on new bounds on the $n$-point function, in case of critical percolation, and on the joint moments of local times of branching random walks. 
\\
\\
\emph{Keywords and phrases.} Critical percolation, incipient infinite cluster, branching random walk, intersection of ranges, moments of local times, capacity. 
\\
MSC 2020 \emph{subject classifications.} 60J80, 60K35. 
\end{abstract}

\section{Introduction}
Excess intersection of random walks is a classical theme in probability theory. In particular, large deviations bounds for the intersection of two independent simple random walk ranges in dimension five and higher can be found in 
Khanin, Mazel, Schlosman and Sina\"i~\cite{KMSS}. Later, van den Berg, Bolthausen and den Hollander~\cite{BBH}, have obtained a Large Deviation Principle for the intersection of two Wiener sausages in a finite time horizon, in any dimension. In dimension five and higher, the present authors were able in~\cite{AS23} to extend their estimates to an infinite time horizon in the discrete setup. In this paper we consider the more intricate setting of tree-like random graphs. Our main results are stretched exponential upper bounds for the intersection of critical percolation clusters, together with matching upper and lower bounds for the intersection of ranges of critical branching random walks. 

\vspace{0.2cm}
Our analysis partly relies on techniques that we developed for tackling similar questions for simple random walks  \cite{AS17,AS21,AS23,AS23b}, and on related results for branching random walks \cite{AS24+,ASS23, LGL,LGL2,Zhu,ZhuECP}. However, there are a number of important novelties here. First we adapt our arguments to the setting of critical percolation clusters, including incipient infinite clusters (IIC) in high dimension. Furthermore, we prove matching upper and lower bounds for the intersection of two Branching random walks in all dimensions, except for the critical dimension eight, where we get different exponents in the upper and lower bounds. Finally we manage to treat the intersection of more than two random sets. The latter relies crucially on some new upper bounds on the $n$-point function (in the setting of percolation clusters), see Proposition~\ref{prop.npoint}, and on the joint moments of the local times of Branching random walks, see Proposition~\ref{prop.localtimesBRW}. In the case of percolation clusters, this can be thought of as a kind of integrated version of the famous tree-graph inequality of Aizenman and Newman~\cite{AN84}, where we unravel a new labelled tree structure. Likewise,  in the case of Branching random walks, our approach complements the exact and more intricate  diagrammatic expansion derived by Angel, Hutchcroft and J\'arai~\cite{AHJ}. In both cases, integrating the occupation field against a test function and optimizing  brings into the game a capacity, whose distinguished properties play an important role in our analysis.

\subsection{Main results: case of two fractal clusters}
Our results may be formulated in a quite general setting, namely we just need some rough control on the volume growth of balls of the ambient space, some polynomial upper bound on the two-point function, and the BK inequality.

To be more specific, 
consider $(V,d_V)$ a countable metric space with polynomial volume growth, in the sense that if for $x\in V$ and $r>0$, 
$$B(x,r) = \{y\in V:d_V(x,y)\le r\},$$
there exist $d\ge 1$ (not necessarily integer), and positive constants $c_1,c_2$, such that
\begin{equation}\label{poly}
c_1\cdot r^d \le |B(x,r)|\le c_2\cdot r^d, \quad \textrm{for all }x\in V,\ r>0.  
\end{equation}
(See e.g.~\cite{Bar} for examples of graphs satisfying this hypothesis for each real $d\ge 1$.) The hypothesis~\eqref{poly} will be in force in the whole paper, and will therefore not be recalled anymore. A prominent example is of course when $V=\mathbb Z^d$, and the distance is given by the Euclidean norm, which is denoted by  $\|\cdot\|$.

Then we consider $\mathcal G$ a random graph with deterministic vertex set $V$ and possibly random edge set. Write $\{x\longleftrightarrow y\}$ for the event that two vertices $x,y\in V$ are connected in $\mathcal G$, and define the {\it two-point function} by 
$$\tau(x,y)= \mathbb P(x\longleftrightarrow y).$$ 
Define also for $\alpha>0$, the function $g_\alpha$, by
$$g_\alpha(x,y) = \frac 1{1+d_V(x,y)^\alpha},\quad x,y\in V.$$
We will require that for some $\alpha \in (d/2,d)$,
\begin{equation}\label{Halpha}
\begin{array}{cc}
\textrm{there exists } C>0, \textrm{ such that for all }x,y\in V, \\
\tau(x,y) \le C\cdot  g_\alpha(x,y). 
\end{array}
\tag{$\mathcal{H}_\alpha$}
\end{equation}
Furthermore, we always assume that $\mathcal G$ satisfies the van den Berg--Kesten (BK) inequality~\cite{BK}, whose definition is  recalled later, see Definition~\ref{def.BK}.

\vspace{0.2cm}
Fix arbitrarily a vertex $0\in V$, and denote by $\mathcal C_0$ its connected component, also sometimes called cluster. 
Our first result provides a stretched exponential moment bound for the intersection of two independent clusters.  

\begin{theorem}\label{LD.intersection.finite}
Assume that $\mathcal G$ satisfies the BK inequality and \eqref{Halpha}, for some $\alpha\in (\tfrac{3d}{4},d)$. Let $\mathcal C_0$ and $\widetilde{\mathcal C_0}$ be two independent copies of the connected component of $0$ in $\mathcal G$. 
There exists $\kappa>0$, such that  
$$\mathbb E\Big[\exp\big(\kappa\cdot|\mathcal C_0 \cap \widetilde {\mathcal C}_0|^{\frac{2\alpha}{d}-1}\big)\Big]<\infty.   $$  
\end{theorem}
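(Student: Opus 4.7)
\emph{Overall strategy.} The plan is to deduce the stretched exponential moment bound from polynomial moment estimates of the form
\[
\mathbb E\big[|\mathcal C_0\cap\widetilde{\mathcal C}_0|^n\big] \le C^n\,(n!)^{d/(2\alpha-d)}, \qquad n\ge 1,
\]
for a suitable constant $C>0$. Markov's inequality together with Stirling's formula and the optimization $n \asymp t^{(2\alpha-d)/d}$ converts this into the tail bound $\mathbb P(|\mathcal C_0\cap\widetilde{\mathcal C}_0|\ge t)\le \exp(-c\,t^{2\alpha/d-1})$, from which finiteness of the exponential moment follows for $\kappa$ small enough. The assumption $\alpha>3d/4$ is used precisely to ensure $d/(2\alpha-d)<2$, or equivalently that the stretched exponent $2\alpha/d-1$ exceeds $1/2$, so that the moments grow slowly enough for the conversion to close.

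\emph{Moment expansion and tree bounds.} Independence of $\mathcal C_0$ and $\widetilde{\mathcal C}_0$ gives
\[
\mathbb E\big[|\mathcal C_0\cap\widetilde{\mathcal C}_0|^n\big] = \sum_{x_1,\ldots,x_n\in V} \tau_{n+1}(0,x_1,\ldots,x_n)^2,
\]
where $\tau_{n+1}(y_0,\ldots,y_n):=\mathbb P(y_0\leftrightarrow y_i\text{ for all }i)$ is the $(n{+}1)$-point function. The crucial input is Proposition~\ref{prop.npoint}, which, by refining the Aizenman--Newman tree-graph inequality via the BK inequality and the new labelled tree structure announced in the introduction, bounds $\tau_{n+1}$ by a sum over rooted trees whose combinatorial count grows at worst factorially, rather than as Cayley's $(n+1)^{n-1}$. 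Hypothesis $(\mathcal H_\alpha)$ then allows each two-point function along a tree edge to be replaced by $g_\alpha$.

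\emph{Convolution--capacity step and main obstacle.} Because each $x_i$ appears in exactly two $g_\alpha$-factors (one from each copy of $\tau_{n+1}$), the sum over $x_i$ can be performed via the basic convolution estimate
\[
\sum_{y\in V} g_\alpha(x,y)\,g_\alpha(y,z)\le C\cdot g_{2\alpha-d}(x,z),
\]
valid for $\alpha\in(d/2,d)$ thanks to the volume-growth hypothesis~\eqref{poly}. Iteratively peeling leaves of the combined diagram reduces the estimate to a sum of products of $g_{2\alpha-d}$-kernels, whose evaluation is controlled by the $(2\alpha-d)$-capacity of the configuration together with elementary volume bounds; balancing the spatial allocation of the $n$ points over nested annuli then yields the target factorial exponent $d/(2\alpha-d)$. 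The principal obstacle is Proposition~\ref{prop.npoint} itself: extracting from the tree-graph inequality a labelled expansion whose combinatorics survive integration against $g_\alpha$ without losing a fatal factor of $n^n$ is the main new technical contribution, and without it the moment bound (and hence the exponential moment bound) would be out of reach.
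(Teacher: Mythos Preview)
Your proposal has a genuine gap at the ``convolution--capacity step.'' The moment bound you assert,
\[
\mathbb E\big[|\mathcal C_0\cap\widetilde{\mathcal C}_0|^n\big] \le C^n\,(n!)^{d/(2\alpha-d)},
\]
is not delivered by the sketch. Squaring the tree expansion of Proposition~\ref{prop.npoint} and summing over $x_1,\dots,x_n$, done carefully, yields $C^n(n!)^2$ --- this is exactly the computation in the proof of Theorem~\ref{theo.generalmoment} with $k=2$ --- and hence only a stretched-exponential exponent $1/2$, strictly worse than the target $2\alpha/d-1>1/2$. Your claim that each $x_i$ ``appears in exactly two $g_\alpha$-factors'' is incorrect: the edge weights in $G_\alpha$ are $g_{2\alpha-d}$, not $g_\alpha$, and a vertex of degree $k$ in $\pi(\mathfrak t)$ contributes $k$ such factors. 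The two trees in the square need not coincide, so the combined diagram is not a tree and ``peeling leaves'' does not reduce it. The sentence about ``balancing the spatial allocation of the $n$ points over nested annuli'' is not an argument; no mechanism is offered for improving $(n!)^2$ to $(n!)^{d/(2\alpha-d)}$.

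The paper's proof is structurally different and does not attempt a direct moment bound on the intersection. It conditions on $\mathcal C_0$, restricts to a ball $B(0,R_t)$ with $R_t=\exp(t^{2\alpha/d-1})$, and performs a multiscale decomposition of $\mathcal C_t(0)=\mathcal C_0\cap B(0,R_t)$ into level sets $\Lambda_i$ of the local occupation density at scales $(r_i,\rho_i)$ linked by $\rho_i r_i^{2\alpha-d}\asymp t^{2\alpha/d-1}$. Using Theorem~\ref{thm.expmoment} (not Proposition~\ref{prop.npoint}) together with capacity estimates, it shows that the event $\mathcal E=\{|\Lambda_i|\le \rho_i^{(2\alpha-2d)/(2\alpha-d)}t\ \forall i\}$ has complement probability $\le C\exp(-\kappa t^{2\alpha/d-1})$, and that on $\mathcal E$ one has $\sup_x g_{2\alpha-d}(x,\mathcal C_t(0))\le C t^{2-2\alpha/d}$; the hypothesis $\alpha>3d/4$ enters here to make the annulus sums converge. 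Only then is Corollary~\ref{cor.1} applied to $\widetilde{\mathcal C}_0$. Proposition~\ref{prop.npoint} plays no role in the proof of Theorem~\ref{LD.intersection.finite}; it appears only later for $k\ge3$ clusters, and even there the sharp exponent still requires the multiscale argument (see the proof of Theorem~\ref{theo.generalcluster}).
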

The result is mainly designed to apply to critical Bernoulli percolation on $\mathbb Z^d$, for which the BK inequality always holds. Furthermore, it is believed that~\eqref{Halpha} should hold in any dimension (see e.g.~\cite[Ch.9]{Gri}), with an optimal $\alpha$ that should take its mean-field value $\alpha=d-2$, when $d>6$. 
Currently a proof of this latter fact is known in dimension $d>10$ for the usual nearest-neighbor lattice, and in any dimension $d>6$ provided the lattice is sufficiently spread out (i.e.~with edges connecting all pairs of vertices at distance at most $L$ one from each other, with $L$ sufficiently large), see~\cite{FvdH17,FvdH17b}, or~\cite{DCP} for a recent proof not using lace expansion,  and~\cite{BS85, Hara08, HvdHS,HS90,HS00,HvdHSa} for earlier results, as well as~\cite{HvdH} for a thorough account on high dimensional percolation. Note that in the mean-field regime where $\alpha=d-2$, the condition $\alpha>3d/4$, is equivalent to $d\ge 9$. This condition may be better understood, knowing that it is only in dimension nine and higher that the intersection of two independent IIC is almost surely finite, but we shall comment more on this condition in a moment.

\vspace{0.2cm}
Interestingly, we do not make any hypothesis on the degrees of the graph. In particular our setting covers the case of long-range percolation on $\mathbb Z^d$, for which the asymptotic behavior of the two-point function at criticality has been obtained   in~\cite{CS15,CS19,HvdHS,Hut21,Hut24+}, see also~\cite{Hut24} for similar results on the hierarchical lattice in the mean-field regime. Strikingly, in this model the range of possible values for $\alpha$ spans the whole admissible interval $(3d/4,d)$.

\vspace{0.2cm} Some other examples that might be particularly interesting to look at, and which have been extensively studied in recent years, are the closely related graphs formed by the excursion sets of a Gaussian Free Field, the (vacant) set of Random Interlacements, and the (vacant) set of a random walk loop soup. In the first of these examples an explicit expression of the two-point function has been computed by Lupu~\cite{Lupu16}, and its asymptotic behavior was identified in~\cite{AS19} for the vacant set of a random walk loop soup. 
Interestingly, for level sets of the Gaussian free field, the capacity of a  cluster is now well understood thanks to~\cite{DPR,DPR2}, contrarily to the case of Bernoulli percolation on $\mathbb Z^d$. Note that 
one difficulty in these models is that the BK inequality is not satisfied, which typically complicates the analysis, see however~\cite{CD23,CD24,Werner21}, and references therein, for recent major progresses, which circumvent this difficulty.

\vspace{0.2cm} Our results for branching random walks, see Theorem~\ref{thm.BRW} below, provide evidence that the exponent $\tfrac{2\alpha}{d}-1$ should be sharp, in the sense that if the two-point function really decays as $d_V(x,y)^{-\alpha +o(1)}$, then the stretched exponential moment with any larger exponent should be infinite.  However, in the setting of critical percolation, proving this essentially amounts to estimate the probability that a single cluster  
covers a positive fraction of a ball centered at the origin, which appears to be a very challenging problem.

\vspace{0.2cm}
As for the condition $\alpha\in (3d/4,d)$, first note that a direct computation shows that if $\tau(x,y)$ decays as $d_V(x,y)^{-\alpha}$, for some $\alpha>0$, then 
$$\mathbb E\big[|\mathcal C_0 \cap \widetilde{\mathcal C}_0|\big]<\infty \quad \Longleftrightarrow \quad \alpha>d/2. $$ 
In particular, when $\alpha\le d/2$, the tail distribution cannot be stretched exponential. In fact, based on our computation for BRWs, we conjecture that if $\alpha < 3d/4$, the tail distribution should decay at a polynomial speed. The precise exponent of the polynomial is not easy to guess in general, since in particular the analogy with Branching random walks might not be valid outside the mean-field regime. However, as we will see later, for Bernoulli bond percolation in a sufficiently spread out lattice in dimension seven, we are able to show that it decays as $t^{-4}$, thanks to the computation of the one-arm exponent by  Kozma and Nachmias~\cite{KN}, see Section~\ref{sec.lowdim}. 
Actually, it might also be possible to compute the exponent in case of Bernoulli site percolation on the triangular lattice, thanks to~\cite{LSW} and~\cite{SW}, but we have not tried to push further in this direction.   
Finally, in the critical case $\alpha=3d/4$ (which is equivalent to $d=8$ when $\alpha = d-2$), we can show that for some constant $\kappa>0$, 
$$\mathbb E\big[\exp(\kappa\cdot |\mathcal C_0\cap  \widetilde{\mathcal C}_0|^{1/3})\big]<\infty,$$
see the proof in Section~\ref{sec.dim8}. 
In particular the tail is at least stretched exponential, but we cannot identify the optimal exponent, even in the  case of BRWs.


\vspace{0.2cm} 
The proof of Theorem~\ref{LD.intersection.finite} is based on a multiscale analysis, which is adapted from the techniques developed in~\cite{AS23} and in previous works. In particular it relies primarily on an exponential moment bound for additive functionals of the occupation field of a cluster, which is the content of Theorem~\ref{thm.expmoment} below. This result may be seen as an extension of Kac's moment formula for random walks, see e.g.~\cite[Proposition 2.9]{Sz}, and is interesting on its own. 
Given a vertex $x\in V$, we denote by $\mathcal C(x)$ its connected component in $\mathcal G$: 
$$\mathcal C(x) = \{y\in V : x\longleftrightarrow y\}. $$ 
For $\varphi:V\to \mathbb R$, and $K:V\times V\to \mathbb R$, 
we write 
$K*\varphi(x) = \sum_{y\in V} K(x,y)\varphi(y)$. 
and let  $\|\varphi\|_\infty = \sup_{x\in V} |\varphi(x)|$. 

\begin{theorem}\label{thm.expmoment}
Assume that $\mathcal G$ satisfies the BK inequality and \eqref{Halpha}, for some $\alpha\in(d/2,d)$. There exists a constant $\kappa>0$, such that for any $\varphi:V\to [0,\infty)$, satisfying $\|g_{2\alpha- d}*\varphi\|_\infty \le 1$, and any $x\in V$,
$$\mathbb E\left[\exp\left(\kappa \cdot \sum_{y\in \mathcal C(x)} \varphi(y)\right)\right] \le 2. $$ 
\end{theorem}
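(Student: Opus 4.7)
The plan is to prove the moment bound
\[
\mathbb{E}\!\left[Z(x)^n\right] \;\le\; C_0^n \qquad \text{for every } n \ge 0,
\]
where $Z(x):=\sum_{y \in \mathcal{C}(x)} \varphi(y)$ and $C_0>0$ depends only on $\alpha,d$ and on the constants from \eqref{poly} and \eqref{Halpha}. This immediately yields the theorem through the Taylor expansion $\mathbb{E}[\exp(\kappa Z(x))] = \sum_{n\ge 0} \kappa^n \mathbb{E}[Z(x)^n]/n! \le e^{\kappa C_0}$, upon choosing $\kappa := (\ln 2)/C_0$. Note that no factorial appears: the tree-graph structure of a critical cluster compensates the combinatorics of the $(n{+}1)$-point function, in contrast with Kac's formula for random walks where $\mathbb{E}[Z^n]$ grows like $n!\,\|G\varphi\|_\infty^n$.

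The central probabilistic input is a caterpillar form of the Aizenman--Newman tree-graph inequality, obtained by iterating BK. Peeling off one $y_i$ at a time via the standard pivotal argument, one produces pivots $u_1,\ldots,u_{n-1}$ such that the chain of occurrences $\{x\longleftrightarrow u_1\}, \{u_1\longleftrightarrow y_n\}, \{u_1\longleftrightarrow u_2\}, \ldots, \{u_{n-1}\longleftrightarrow y_2\}, \{u_{n-1}\longleftrightarrow y_1\}$ takes place disjointly, and BK then gives
\[
\mathbb{P}\!\bigl(x\longleftrightarrow y_1,\ldots,x\longleftrightarrow y_n\bigr) \;\le\; \sum_{u_1,\ldots,u_{n-1}} \tau(x,u_1) \tau(u_{n-1},y_1) \prod_{j=1}^{n-1}\tau(u_j,y_{n+1-j})\prod_{j=1}^{n-2}\tau(u_j,u_{j+1}).
\]
Multiplying by $\prod_i \varphi(y_i)$, summing over $\vec y \in V^n$, and invoking \eqref{Halpha} to replace every $\tau$ by $C g_\alpha$ (which produces a prefactor $C^{2n-1}$), each leaf sum $\sum_{y_i}\varphi(y_i)g_\alpha(u_j,y_i)$ collapses into a factor $(g_\alpha*\varphi)(u_j)$. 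Hence every internal vertex $u_j$ with $1\le j\le n-2$ carries one factor $(g_\alpha*\varphi)(u_j)$, whereas $u_{n-1}$ carries the square $(g_\alpha*\varphi)(u_{n-1})^2$.

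I would then use two analytic ingredients, both valid throughout the range $\alpha\in(d/2,d)$ under \eqref{poly}. First, the pointwise comparison $g_\alpha \le g_{2\alpha-d}$ (which holds because $\alpha>2\alpha-d$ and $d_V(x,y)\ge 1$ for $x\ne y$), combined with the hypothesis of the theorem, yields $(g_\alpha*\varphi)(v)\le(g_{2\alpha-d}*\varphi)(v)\le 1$ for every $v\in V$. Second, the convolution estimate $(g_\alpha * g_\alpha)(u,v) \le C g_{2\alpha-d}(u,v)$, which follows by splitting $V$ into the balls of radius $d_V(u,v)/2$ around $u$ and $v$ and the far field, applying \eqref{poly} in each. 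With these at hand, I process the caterpillar rung by rung from the tail: at $u_{n-1}$ I bound one of the two $(g_\alpha*\varphi)(u_{n-1})$ factors by $1$, then evaluate $\sum_{u_{n-1}}g_\alpha(u_{n-2},u_{n-1})(g_\alpha*\varphi)(u_{n-1}) \le C(g_{2\alpha-d}*\varphi)(u_{n-2}) \le C$. Absorbing this constant, I iterate: at each subsequent rung, $\sum_{u_j}g_\alpha(u_{j-1},u_j)(g_\alpha*\varphi)(u_j)\le C(g_{2\alpha-d}*\varphi)(u_{j-1})\le C$, pocketing one factor of $C$ per rung. After $n-1$ reductions, only $\sum_{u_1}g_\alpha(x,u_1)(g_\alpha*\varphi)(u_1)\le C$ remains, giving $\mathbb{E}[Z(x)^n]\le C^{3n-2}$ and hence $\le C_0^n$ with $C_0 := C^3$.

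The main obstacle is the accounting in this rung-by-rung reduction: one must bound exactly one of the two $(g_\alpha*\varphi)(u_{n-1})$ factors by $1$ and keep exactly one $(g_\alpha*\varphi)(u_j)$ per internal vertex to feed the convolution estimate. Bounding too many factors by $1$ too early leaves a bare, divergent sum of $g_\alpha$'s along the spine, while keeping too many forces the appearance of $g_\alpha*g_{2\alpha-d}*\varphi$, whose $L^\infty$ norm we cannot control unless $\alpha > 2d/3$. The specific asymmetry of the Aizenman--Newman caterpillar---in which only the tail internal vertex carries two leaves while every other one carries a single leaf---is precisely what makes the rung-by-rung balance feasible throughout the full range $\alpha \in (d/2,d)$.
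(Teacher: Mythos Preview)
Your proof has a genuine gap: the caterpillar tree--graph inequality you invoke is false for a fixed labelling of the leaves. Concretely, take $n=3$ and the configuration with open path $P: x\!-\!a\!-\!b\!-\!y_3$ together with the single open edges $a\!-\!y_1$ and $b\!-\!y_2$. Then $y_1,y_2,y_3\in\mathcal C(x)$, but there is \emph{no} vertex $u_1$ on $P$ connected to both $y_1$ and $y_2$ off $P$: from $a$ one reaches $y_1$ but not $y_2$ without using $a\!-\!b\in P$, and symmetrically from $b$. Thus the inclusion
\[
\{y_1,\dots,y_n\in\mathcal C(x)\}\subseteq \bigcup_{u_1}\{x\leftrightarrow u_1\}\circ\{u_1\leftrightarrow y_n\}\circ\{y_1,\dots,y_{n-1}\in\mathcal C(u_1)\}
\]
fails, and so does the iterated caterpillar bound. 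What the Aizenman--Newman argument actually yields is a sum over \emph{all} binary tree shapes with $n+1$ labelled leaves; the combinatorics of that sum is what produces a factorial.

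In fact your target bound $\mathbb E[Z(x)^n]\le C_0^n$ cannot hold: it would force $\|Z(x)\|_{L^p}\le C_0$ for every $p$, hence $Z(x)\le C_0$ almost surely, uniformly in $\varphi$. But with $\varphi= c\,r^{-2(d-\alpha)}\mathbf 1_{B(0,r)}$ (which satisfies $\|g_{2\alpha-d}*\varphi\|_\infty\le 1$ for suitable $c$), one has $Z(0)=c\,r^{-2(d-\alpha)}|\mathcal C_0\cap B(0,r)|$, and the event $\{B(0,r)\subset\mathcal C_0\}$ has positive probability, giving $Z(0)\gtrsim r^{2\alpha-d}\to\infty$. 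So a factorial is unavoidable. The paper establishes by a direct induction on $n$ (decomposing according to the vertex $M(x;y_1,\dots,y_n)$ that separates $x$ from the $y_i$'s, and to the number $L\ge 2$ of branches at $M$) the estimate
\[
\mathbb E\Big[\Big(\sum_{y\in\mathcal C(x)}\varphi(y)\Big)^n\Big]\le C^{n-1}\,(1\vee(n-2))!\cdot g_\alpha*\varphi(x),
\]
and it is only after dividing by $n!$ in the exponential series that the sum converges. Your rung-by-rung reduction and the two analytic ingredients ($g_\alpha\le g_{2\alpha-d}$ and $g_\alpha*g_\alpha\lesssim g_{2\alpha-d}$) are correct and are exactly the tools the paper uses; what is missing is a valid combinatorial decomposition feeding them.
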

We stress that the range of applications of this result is not merely limited to the  proof of Theorem~\ref{LD.intersection.finite}. In particular we shall see in Section~\ref{subsec.applications} below that it has some other interesting consequences concerning the geometry of a typical cluster. Its proof is a variant of the tree-graph inequality method of Aizenman and Newman~\cite{AN84}. However, here we use a slightly different induction argument to bound the moments of $\sum_{y\in \mathcal C(x)} \varphi(y)$, very much as in~\cite{ASS23} in the setting of branching random walks.

\vspace{0.2cm}
As a matter of fact,  Theorems~\ref{LD.intersection.finite} and~\ref{thm.expmoment} can be extended to the case of the Incipient Infinite Cluster (IIC). To be more precise, consider Bernoulli (bond) percolation on the lattice $\mathbb Z^d$, with either $d>10$ and nearest neighbor edges, or $d>6$ and edges between any pair of vertices at distance at most $L$ one from each other, with $L$ sufficiently large. In this setting, the IIC can be defined as the limit in distribution of the cluster of the origin, conditioned on being connected to $x$, as we let $x$ tend to infinity. The existence of this limit has been shown in~\cite{FvdH17,HvdHH,vdHJ}. Note that the IIC has also been defined in dimension two by Kesten~\cite{Kes} (see also \cite{Jar}), but  this case falls in another regime, which we shall not consider here.

For $\varphi:\mathbb Z^d\to [0,\infty)$, we let  $\|\varphi\|_1=\sum_{x\in \mathbb Z^d}\varphi(x)\in [0,\infty]$.

\begin{theorem}\label{thm.IIC}
Let $\mathcal C_\infty$ be the IIC of Bernoulli percolation on $\mathbb Z^d$, either with $d>6$ and sufficiently spread-out lattice,  or $d>10$ in the nearest neighbor model. There exist positive constants $C$ and $\kappa$, such that for any $\varphi : \mathbb Z^d \to [0,\infty)$,  satisfying  $\|g_{d-4} * \varphi\|_\infty \le 1$,
\begin{equation}\label{IIC.1}
\mathbb E\left[\exp\left(\kappa \cdot \sum_{x\in \mathcal C_\infty} \varphi(x)\right)\right] \le 1+C \, \|\varphi\|_1.
\end{equation}
Furthermore, if $d\ge 9$ and $\widetilde {\mathcal C}_\infty$ denotes an independent copy of $\mathcal C_\infty$, then
\begin{equation}\label{IIC.2} 
\mathbb E\left[\exp\left(\kappa \cdot|\mathcal C_\infty \cap \widetilde {\mathcal C}_\infty|^{1-\frac 4d}\right)\right] <\infty.  
\end{equation}
\end{theorem}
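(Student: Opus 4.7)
The plan is to establish \eqref{IIC.1} by exploiting the IIC limit construction together with Theorem~\ref{thm.expmoment}, and then to feed \eqref{IIC.1} into the multiscale argument that takes Theorem~\ref{thm.expmoment} into Theorem~\ref{LD.intersection.finite} in order to obtain \eqref{IIC.2}. By monotone convergence one may assume $\varphi$ has finite support; then by the construction of the IIC in \cite{FvdH17,HvdHH,vdHJ},
\[
\mathbb{E}\bigl[e^{\kappa F(\mathcal{C}_\infty)}\bigr] \;=\; \lim_{x \to \infty} \frac{1}{\tau(0,x)}\, \mathbb{E}\bigl[e^{\kappa F(\mathcal{C}_0)}\mathbf{1}_{0\longleftrightarrow x}\bigr],
\]
where $F(\mathcal{C}) = \sum_{y\in \mathcal{C}}\varphi(y)$, so the task reduces to showing that the numerator is at most $(1 + C\|\varphi\|_1 + o(1))\,\tau(0,x)$ as $x \to \infty$, with $C$ independent of $\varphi$ and $x$.

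I would expand the exponential in a power series and invoke Proposition~\ref{prop.npoint} to dominate the multi-point function $\tau_{n+2}(0,x,y_1,\ldots,y_n) = \mathbb{P}(\{0,x,y_1,\ldots,y_n\}\subseteq \mathcal{C}_0)$ by a sum of products of two-point functions over labelled trees on $\{0,x,y_1,\ldots,y_n\}$. In each such tree there is a unique self-avoiding path, the \emph{backbone} $0 = z_0, z_1, \ldots, z_k, z_{k+1} = x$, whose interior vertices are some subset of the $y_i$'s, while the remaining $y_j$'s form subtrees hanging off the backbone. Re-summing the $\varphi$-weighted tree expansions of the hanging part, vertex by vertex, is by construction the moment expansion of $\mathbb{E}\bigl[\exp(\kappa \sum_{y \in \mathcal{C}(z_j)}\varphi(y))\bigr]$ at each backbone vertex $z_j$, which, since the hypothesis $\|g_{d-4}*\varphi\|_\infty = \|g_{2\alpha-d}*\varphi\|_\infty \le 1$ is exactly the one of Theorem~\ref{thm.expmoment} for $\alpha = d-2$, is at most $2$. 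It remains to bound the backbone sum
\[
\sum_{k \ge 0} \frac{(C\kappa)^k}{k!}\sum_{z_1, \ldots, z_k} \tau(0,z_1)\tau(z_1,z_2)\cdots\tau(z_k,x)\prod_{i=1}^{k}\varphi(z_i).
\]
Using the mean-field convolution estimate $\tau*\tau \le C\,\tau$ (from the Fourier/random-walk representation of $\tau$, see~\cite{HvdH,FvdH17,FvdH17b}) together with $\|g_{d-4}*\varphi\|_\infty \le 1$ (which in turn implies $\|\varphi\|_\infty \le 1$ since $g_{d-4}(y,y)=1$), the $k=0$ contribution is exactly $\tau(0,x)$; the $k \ge 1$ contributions can be bounded inductively by extracting one factor of $\|\varphi\|_1$ from a chosen $\varphi(z_i)$ and folding the remaining convolutions into $\tau(0,x)$ via $\tau*\tau \le C\tau$. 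Summing the resulting geometric series in $C\kappa$ (with $\kappa$ small enough) yields the bound $(1 + C\|\varphi\|_1)\tau(0,x)$, and dividing by $\tau(0,x)$ and letting $x \to \infty$ proves \eqref{IIC.1}.

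For \eqref{IIC.2}, conditional on $\widetilde{\mathcal{C}}_\infty$ I would view $|\mathcal{C}_\infty\cap\widetilde{\mathcal{C}}_\infty| = \sum_{y\in\mathcal{C}_\infty}\mathbf{1}_{y\in\widetilde{\mathcal{C}}_\infty}$ as an additive functional of $\mathcal{C}_\infty$, and apply \eqref{IIC.1} with $\varphi = \lambda\mathbf{1}_{\widetilde{\mathcal{C}}_\infty\cap B(0,r)}$ on each dyadic scale $r = 2^j$, the scalar $\lambda$ being chosen so that $\|g_{d-4}*\varphi\|_\infty = 1$. This forces $\lambda$ to be comparable to $1/\mathrm{cap}_{d-4}(\widetilde{\mathcal{C}}_\infty\cap B(0,r))$, where $\mathrm{cap}_{d-4}$ is the capacity associated with the kernel $g_{d-4}$. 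In the mean-field IIC, the two-sided estimates $|\widetilde{\mathcal{C}}_\infty\cap B(0,r)|\asymp r^4$ and $\mathrm{cap}_{d-4}(\widetilde{\mathcal{C}}_\infty\cap B(0,r))\asymp r^4$ hold with high probability, so that \eqref{IIC.1} combined with a Chernoff bound on each dyadic shell and the multiscale summation from the proof of Theorem~\ref{LD.intersection.finite} yields, after optimizing in $r$ for a given deviation level $t$, a stretched exponential tail with exponent $1 - 4/d$. The condition $d \ge 9$ is precisely the threshold at which $1 - 4/d > 5/9$ so that the intersection is almost surely finite and the dyadic tails are summable.

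The principal obstacle I anticipate is the combinatorial step extracting the backbone in the second paragraph: one must verify that the labelled-tree bound of Proposition~\ref{prop.npoint} cleanly decomposes into a product of a backbone factor and of hanging-subtree factors, the latter being controlled vertex by vertex by Theorem~\ref{thm.expmoment}. This may require a slightly strengthened form of Proposition~\ref{prop.npoint} in which the underlying tree is rooted at $0$ and carries a distinguished ray to $x$. A secondary technicality for Part~2 is securing the lower bound $\mathrm{cap}_{d-4}(\widetilde{\mathcal{C}}_\infty \cap B(0,r)) \gtrsim r^4$ with overwhelming IIC probability, which should nevertheless follow from the high-dimensional IIC scaling results of \cite{FvdH17,FvdH17b,HvdHH,vdHJ} and the $(d-4)$-capacity machinery developed in the body of the paper.
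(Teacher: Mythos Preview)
Your approach to~\eqref{IIC.1} is substantially more involved than necessary, and the backbone decomposition you flag as ``the principal obstacle'' is indeed a real gap: Proposition~\ref{prop.npoint} produces trees on the labelled points $\{x,y_1,\dots,y_n\}$ with $g_{2\alpha-d}=g_{d-4}$ on the edges, not $\tau$-factors, so there is no immediate backbone carrying $\tau(0,z_1)\tau(z_1,z_2)\cdots\tau(z_k,x)$ to which one could apply $\tau*\tau\le C\tau$. The paper sidesteps this entirely with a one-line trick: on the event $\{0\longleftrightarrow z\}$ one has $\mathcal C_0=\mathcal C(z)$, hence
\[
\mathbb E_{p_c}\Big[\Big(\sum_{x\in\mathcal C_0}\varphi_R(x)\Big)^n\mathbf 1\{0\longleftrightarrow z\}\Big]
\;\le\;
\mathbb E_{p_c}\Big[\Big(\sum_{x\in\mathcal C(z)}\varphi_R(x)\Big)^n\Big]
\;\le\; C^{n-1}(1\vee(n-2))!\cdot g_{d-2}*\varphi_R(z),
\]
the second inequality being precisely the moment bound~\eqref{induction} re-centred at $z$ rather than at $0$. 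For $\|z\|\ge 2R$ one then has $g_{d-2}*\varphi_R(z)\le C_0\,\tau_{p_c}(0,z)\,\|\varphi\|_1$, and summing over $n$ gives $(1+C\|\varphi\|_1)\,\tau_{p_c}(0,z)$ directly. No tree-graph combinatorics or mean-field convolution estimate is needed.

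For~\eqref{IIC.2}, your proposal rests on the estimate $\mathrm{Cap}_{d-4}(\widetilde{\mathcal C}_\infty\cap B(0,r))\asymp r^4$ with high $\mathbb P_{\mathrm{IIC}}$-probability. The lower bound here is the nontrivial direction (it amounts to a quantitative statement that the IIC is at least $(d-4)$-dimensional in a capacitary sense), and it is neither proved in the paper nor available in the cited references; Lemma~\ref{cap.lowerbound} only gives $\mathrm{Cap}_{d-4}\gtrsim r^{4(d-4)/d}$, which is too weak. The paper instead re-runs the full multiscale argument of Section~\ref{sec.proof.main}: one observes that~\eqref{IIC.1} replaces the bound of Theorem~\ref{thm.expmoment} at the cost of a multiplicative $\|\varphi\|_1$, which in the analogue of Proposition~\ref{lem.unionballs} becomes a harmless $1/\rho$ prefactor (since $\|\varphi\|_1=\widetilde{\mathrm{Cap}}_{d-4}(B(A,r))$ is absorbed by the exponential). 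The rest of the proof of Theorem~\ref{LD.intersection.finite} then goes through verbatim. Finally, your explanation of the threshold $d\ge 9$ via ``$1-4/d>5/9$'' is not the right condition; the relevant inequality is $\alpha=d-2>3d/4$, i.e.\ $d>8$.
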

We note that in the sufficiently spread-out model with $d\in \{7,8\}$ the intersection of two independent IIC has infinite volume almost surely. Indeed, this can be shown using a second moment method and the estimates on the two-point function from~\cite[Theorem 1.3]{vdHJ}.

\subsection{Some applications} \label{subsec.applications}
We present now some other immediate applications of Theorem~\ref{thm.expmoment}, concerning the geometry of a typical cluster.

First recall that for critical Bernoulli percolation on a Euclidean lattice, it is widely believed that the tail distribution of $|\mathcal C_0|$ should decay as 
$$\mathbb P(|\mathcal C_0|\ge n) \approx n^{-1/\delta+o(1)},$$
where $\delta$ is supposed to be universal, 
see~\cite[Ch.~9]{Gri}. It has been shown by Aizenman and Barsky~\cite{AB} that if it exists, $\delta$ must be at least $2$, and it is also known that $\delta =2$ on high dimensional hypercubic lattices, see~\cite{FvdH17} and references therein, or more generally under the triangle condition, see~\cite{AN84,BA,Hut22} and~\cite{Hut24+} for an application to long-range percolation. 
The next result shows that if one considers now the size of the cluster in a localized region, then the tail distribution is stretched exponential. This fact was already known for critical clusters restricted to balls, by~\cite{AN84}, but we can now extend it to arbitrary finite sets.

Given a function $g:V\times V\to \mathbb R$, $y\in V$, and $A\subset V$, we define 
$g(y,A) = \sum_{z\in A} g(y,z)$.  
\begin{corollary}\label{cor.1}
Assume that $\mathcal G$ satisfies the BK inequality and~\eqref{Halpha}, for some $\alpha\in (d/2,d)$. 
There exists $c>0$, such that for any $x\in V$,  any finite $A\subset V$, and any $t>0$, 
$$\mathbb P(|\mathcal C(x) \cap A|> t)
 \le 2 \cdot \exp\Big(-c \cdot \frac{t}{\sup_{y\in V} g_{2\alpha -d}(y,A)}\Big)
\le 2\cdot 
\exp\Big(-c^2 \cdot \frac{t}{|A|^{2(1- \frac{\alpha}{d})}}\Big).  $$ \end{corollary}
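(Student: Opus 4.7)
The plan is to deduce the corollary directly from Theorem~\ref{thm.expmoment} by a normalisation trick, with the second inequality following from a purely geometric estimate on the quantity $\sup_y g_{2\alpha-d}(y, A)$ (which plays the role of a ``capacity'' of $A$).

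For the first inequality I would set $S := \sup_{y\in V} g_{2\alpha-d}(y, A)$ and take $\varphi := \mathbf{1}_A/S$. This normalisation forces $\|g_{2\alpha-d} * \varphi\|_\infty \le 1$ by construction, while $\sum_{y\in\mathcal C(x)} \varphi(y) = |\mathcal C(x)\cap A|/S$. Theorem~\ref{thm.expmoment} then gives $\mathbb E[\exp((\kappa/S)|\mathcal C(x)\cap A|)] \le 2$, whence Markov's inequality yields the stated bound with $c = \kappa$.

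For the second inequality, what remains is to prove $S \le C\cdot|A|^{2(1-\alpha/d)}$ for some constant $C$ depending only on $d,\alpha,c_1,c_2$. Fix $y\in V$ and write $N=|A|$. Since $z\mapsto g_{2\alpha-d}(y,z)$ is a non-increasing function of $d_V(y,z)$, an elementary rearrangement shows that $g_{2\alpha-d}(y,A)$ is maximised, for given cardinality $N$, when $A$ consists of the $N$ vertices of $V$ closest to $y$; by~\eqref{poly} this set is contained in some $B(y,r)$ with $r \asymp N^{1/d}$. A dyadic decomposition of $B(y,r)$, together with~\eqref{poly} and $2d - 2\alpha > 0$, then gives
$$g_{2\alpha-d}(y,A) \ \lesssim \ \sum_{k=0}^{\lceil \log_2 r\rceil} 2^{k(2d-2\alpha)} \ \lesssim \ r^{2(d-\alpha)} \ \asymp \ N^{2(1-\alpha/d)},$$
where $\alpha<d$ is used to dominate the geometric sum by its largest term. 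Substituting this into the first inequality and redefining $c$ (for instance as $\min(\kappa,\sqrt{\kappa/C})$) produces the claimed form with $c^2$ in the exponent.

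The main obstacle is essentially absent: all the analytic work is packaged into Theorem~\ref{thm.expmoment}. The two remaining ingredients, namely Markov's inequality and the rearrangement/dyadic volume estimate, are routine consequences of~\eqref{poly} and the range $\alpha\in(d/2,d)$.
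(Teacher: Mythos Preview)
Your proof is correct and follows essentially the same approach as the paper: the same choice of test function $\varphi=\mathbf 1_A/S$ fed into Theorem~\ref{thm.expmoment} together with Chebyshev's exponential inequality for the first bound, and a rearrangement plus the polynomial volume growth~\eqref{poly} for the second. Your version is a bit more explicit about the dyadic estimate, but the argument is the same.
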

\begin{proof}
Let us start with the first inequality. Let  $\varphi (y)= \frac{\mathbf 1\{y\in A\}}{\sup_{z\in V} g_{2\alpha -d}(z,A)}$. Note that it satisfies $\| g_{2\alpha -d}* \varphi\|_\infty\le 1$, and that
$$\mathbb P(|\mathcal C(x) \cap A|> t) \le 
\mathbb P\Big(\sum_{y\in \mathcal C(x)} \varphi(y) > \frac{t}{\sup_{z\in V} g_{2\alpha -d}(z,A)}\Big). $$
It then suffices to apply Theorem~\ref{thm.expmoment}, and Chebyshev's exponential inequality. The second inequality follows from a simple rearrangement and the hypothesis of polynomial volume growth.  
\end{proof}

Now we mention another application involving the notion of $\beta$-capacity, which plays a central role in the proof of Theorem~\ref{LD.intersection.finite}. For $\beta\in (0,d)$, the $\beta$-capacity of a finite set $A\subset V$, is defined by 
$$\textrm{Cap}_\beta(A) = \left( \inf_\nu \sum_{x,y\in A} g_\beta(x,y) \nu(x)\nu(y) \right)^{-1},$$ 
where the infimum runs over probability measures $\nu$ supported on $A$. It is well known (but for reader's convenience we provide a full proof in the Appendix) that this functional is equivalent to the following one:
$$\widetilde{\textrm{Cap}}_\beta(A) = \sup \left\{\sum_{x\in A} \varphi(x) : \varphi :V\to [0,\infty), \,   \|g_\beta*\varphi\|_\infty\le 1, \varphi \equiv 0 \text{ on } V\setminus A\right\}, $$ 
in the sense that there are 
positive constants $c$ and $c'$ such that for any finite $A\subset V$, 
\begin{equation}\label{equiv.cap}
c\cdot \widetilde{\textrm{Cap}}_\beta(A)\le  \textrm{Cap}_\beta(A) \le c'\cdot  \widetilde{\textrm{Cap}}_\beta(A).
\end{equation}

\begin{corollary}
Assume that $\mathcal G$ satisfies the BK inequality and~\eqref{Halpha}, for some $\alpha\in (d/2,d)$. There exists a constant $\kappa>0$, such that for any $x\in V$, and any finite $A\subset V$, 
$$\mathbb P(A\subseteq \mathcal C(x))\le 2\cdot \exp\left(-\kappa \cdot {\rm Cap}_{2\alpha-d}(A)\right).  $$ 
Furthermore, the same inequality holds for the IIC on $\mathbb Z^d$, instead of $\mathcal C(x)$, with $\alpha = d-2$, when either $d\ge 11$ for the nearest neighbor model, or $d>6$ in the sufficiently spread out model.  
\end{corollary}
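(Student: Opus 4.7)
The plan is to deduce the corollary as a one-line consequence of Theorem~\ref{thm.expmoment} (respectively Theorem~\ref{thm.IIC}) via Chebyshev's exponential inequality, exploiting the fact that $\mathrm{Cap}_{2\alpha-d}$ is, up to constants, the exact dual quantity to the hypothesis $\|g_{2\alpha-d}\ast\varphi\|_\infty \le 1$ appearing in those theorems. The bridge between the two is the equivalence~\eqref{equiv.cap} relating $\mathrm{Cap}_\beta$ to its sup-characterization $\widetilde{\mathrm{Cap}}_\beta$.

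Set $\beta := 2\alpha - d \in (0,d)$. I would begin by using~\eqref{equiv.cap} to pick a near-maximizer $\varphi : V \to [0,\infty)$ of the variational problem defining $\widetilde{\mathrm{Cap}}_\beta(A)$: supported on $A$, satisfying $\|g_\beta \ast \varphi\|_\infty \le 1$, and
$$\sum_{y \in A} \varphi(y) \,\ge\, \tfrac{1}{2}\, \widetilde{\mathrm{Cap}}_\beta(A) \,\ge\, c_0 \cdot \mathrm{Cap}_\beta(A),$$
for some universal $c_0 > 0$. On the event $\{A \subseteq \mathcal C(x)\}$, nonnegativity of $\varphi$ yields the pointwise lower bound $\sum_{y \in \mathcal C(x)} \varphi(y) \ge c_0 \cdot \mathrm{Cap}_\beta(A)$. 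Chebyshev's exponential inequality combined with Theorem~\ref{thm.expmoment} applied to the same $\varphi$ then gives
$$\mathbb P(A \subseteq \mathcal C(x)) \,\le\, e^{-\kappa c_0 \mathrm{Cap}_\beta(A)}\, \mathbb E\Big[\exp\Big(\kappa \sum_{y \in \mathcal C(x)} \varphi(y)\Big)\Big] \,\le\, 2\, e^{-\kappa c_0 \mathrm{Cap}_\beta(A)},$$
which is the claimed bound after relabeling the constant.

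For the IIC statement I would run the same scheme with $\beta = d-4$, matching the exponent in~\eqref{IIC.1} (which corresponds to $\alpha = d-2$). Using~\eqref{IIC.1} in place of Theorem~\ref{thm.expmoment} produces the analogous estimate, up to an extra prefactor $1 + C\|\varphi\|_1$. Since $\varphi$ is admissible in the sup defining $\widetilde{\mathrm{Cap}}_{d-4}(A)$, we automatically have $\|\varphi\|_1 \le \widetilde{\mathrm{Cap}}_{d-4}(A) \lesssim \mathrm{Cap}_{d-4}(A)$, so the prefactor is only polynomial in $\mathrm{Cap}_{d-4}(A)$, and it can be absorbed into the exponential by slightly decreasing $\kappa$, via the elementary bound $(1 + as)\, e^{-bs} \le 2\, e^{-(b-\varepsilon)s}$ valid for $s \ge 0$ once $\varepsilon > 0$ is chosen small depending on $a,b$. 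There is essentially no genuine obstacle here: all the hard work has already been front-loaded into Theorems~\ref{thm.expmoment} and~\ref{thm.IIC} and into the Appendix proof of~\eqref{equiv.cap}; the only step requiring attention is this harmless polynomial-versus-exponential absorption in the IIC case.
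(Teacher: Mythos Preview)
Your proof is correct and follows the paper's approach exactly: pick a (near-)maximizer $\varphi$ for $\widetilde{\mathrm{Cap}}_{2\alpha-d}(A)$, observe that $A\subseteq\mathcal C(x)$ forces $\sum_{y\in\mathcal C(x)}\varphi(y)\ge\sum_{y\in A}\varphi(y)$, and apply Chebyshev together with Theorem~\ref{thm.expmoment} (resp.~\eqref{IIC.1}) and~\eqref{equiv.cap}. One minor slip in the IIC absorption step: the inequality $(1+as)e^{-bs}\le 2e^{-(b-\varepsilon)s}$ actually requires $\varepsilon\ge a/2$ rather than $\varepsilon$ small (and hence $a<2b$), but since you always have the trivial bound $\mathbb P\le 1$ to cover bounded values of $s=\mathrm{Cap}_{d-4}(A)$, the conclusion goes through regardless.
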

\begin{proof}
Let $\varphi$ be a function realizing the maximum in the definition of $\widetilde{\textrm{Cap}}_{2\alpha -d}(A)$, and note that  
$$\mathbb P(A\subseteq \mathcal C(x)) \le \mathbb P\Big(\sum_{y\in \mathcal C(x)} \varphi(y) \ge \sum_{y\in A}\varphi(y)\Big). $$ 
Then the result follows from Chebyshev'exponential inequality,  Theorem~\ref{thm.expmoment}, and~\eqref{equiv.cap}. The proof for the IIC is exactly the same. 
\end{proof}

\subsection{Extension to an arbitrary finite number of clusters}
One natural question one can ask concerns the extension of our main result to the intersection of more than two clusters. It turns out that this is a quite delicate question. Our result is as follows. 
\begin{theorem}\label{theo.generalcluster}
Assume that $\mathcal G$ satisfies the BK inequality and \eqref{Halpha}, for some $\alpha\in (d/2,d)$. Let $(\mathcal C_0^i)_{i\ge 1}$ be independent copies of the connected component of $0$ in $\mathcal G$. 
\begin{itemize}
\item[$(i)$] If $\alpha \in (\frac{(k+1)d}{2k},\frac{kd}{2(k-1)})$, for some $k\ge 2$, then there exists $\kappa>0$, such that  
$$\mathbb E\left[\exp\left(\kappa \cdot |\mathcal C_0^1 \cap \dots\cap \mathcal C_0^k|^{\frac{2\alpha}{d}-1}\right)\right]<\infty.  $$
\item[$(ii)$] If $\alpha = \tfrac{kd}{2(k-1)}$, for some $k\ge 3$, then there exists $\kappa>0$, such that for any $t>e$, 
$$\mathbb P(|\mathcal C_0^1 \cap \dots\cap \mathcal C_0^k|>t)\le \exp\Big(-\kappa (\frac t{\log t})^{\frac {2\alpha}d-1}\Big).$$
\end{itemize}
\end{theorem}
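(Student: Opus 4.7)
The proof extends the conditioning strategy behind Theorem~\ref{LD.intersection.finite} to all $k$ clusters, with Proposition~\ref{prop.npoint} as the crucial new input when $k\ge 3$. Set $N_k := |\cap_{i=1}^k \mathcal C_0^i|$ and $A := \cap_{i=1}^{k-1} \mathcal C_0^i$, so that $N_k = |\mathcal C_0^k \cap A|$. Since $\mathcal C_0^k$ is independent of $A$, Corollary~\ref{cor.1} applied conditionally on $A$ gives
\begin{equation*}
\mathbb P(N_k > t \mid A) \;\le\; 2 \exp\!\left(-c\,t/\mathfrak g(A)\right), \qquad \mathfrak g(A) := \sup_{y\in V} g_{2\alpha-d}(y,A),
\end{equation*}
so the whole task reduces to obtaining a sufficiently sharp stretched-exponential tail bound on the scalar random variable $\mathfrak g(A)$.

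\medskip

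\textbf{Moments of $\mathfrak g(A)$ via the $n$-point function.} By independence of the first $k-1$ clusters, for any $\varphi\ge 0$,
\begin{equation*}
\mathbb E\!\left[\Bigl(\sum_{z\in A}\varphi(z)\Bigr)^{\!n}\right] = \sum_{z_1,\dots,z_n}\varphi(z_1)\cdots\varphi(z_n)\,\bigl[\tau_{n+1}(0,z_1,\dots,z_n)\bigr]^{k-1}.
\end{equation*}
Proposition~\ref{prop.npoint}, the integrated tree-graph bound, rewrites the right-hand side as a weighted sum over labelled trees on $n+1$ vertices whose edges carry $g_\alpha^{k-1}$-type weights. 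Iterating the convolution inequality $g_a * g_b \lesssim g_{a+b-d}$, valid when $a,b<d$ and $a+b>d$ (with a logarithmic correction when $a+b=d$), contracts each tree to a short chain of $g_\beta$-type functionals of $\varphi$. Specialising to $\varphi(z)=g_{2\alpha-d}(y,z)$ and handling $\sup_y$ by a union bound over the polynomially many $y$ that matter (thanks to the polynomial decay of $g_{2\alpha-d}$) yields, in case $(i)$, factorial-type moment bounds which by Chebyshev translate into a stretched-exponential tail $\mathbb P(\mathfrak g(A)>s)\le C e^{-c s^q}$ for an appropriate exponent $q>0$. In the critical case $(ii)$, where $\alpha = kd/(2(k-1))$, exactly one of the iterated convolutions sits at the borderline $a+b=d$, and each contraction accrues a factor $\log$ of the ambient scale; tracking these factors gives instead $\mathbb E[\mathfrak g(A)^n]\lesssim (Cn\log n)^n$.

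\medskip

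\textbf{Optimization.} Combining both estimates and optimizing the cutoff level $s$,
\begin{equation*}
\mathbb P(N_k > t) \;\le\; \inf_{s>0}\!\left\{\mathbb P(\mathfrak g(A)>s) + 2\,e^{-ct/s}\right\},
\end{equation*}
one obtains in case $(i)$ the tail bound $\mathbb P(N_k>t)\le C'\exp(-\kappa t^{2\alpha/d-1})$, the exponent $2\alpha/d-1$ being the natural balance $q/(q+1)$ with the $q$ produced above; integrating this estimate against $\exp(\kappa' t^{2\alpha/d-1})$ then yields the stretched-exponential moment statement of $(i)$. In case $(ii)$ the $\log$ corrections propagate through the optimization and produce the $(t/\log t)^{2\alpha/d-1}$ correction in the exponent of $(ii)$.

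\medskip

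\textbf{Main obstacle.} The heart of the argument, and its main difficulty, is the combinatorial analysis of the labelled tree sum produced by Proposition~\ref{prop.npoint} applied to $\tau_{n+1}^{k-1}$: one has to control $n$ iterated convolutions of $g_\alpha$-type kernels and confirm that the overall moment bound retains a manageable $n!$-type growth. The two $\alpha$-ranges in the statement are exactly those for which this iteration stays strictly subcritical (case $(i)$) or is marginally critical at exactly one step (case $(ii)$), which is why the result takes a dichotomous form; tracking the logarithmic accumulation in the critical case while preserving the factorial growth is the most delicate point.
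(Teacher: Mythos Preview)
Your outline misses the key structural ingredient and, as sketched, produces the wrong exponent.

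\textbf{What the paper actually does.} The paper swaps the roles relative to your setup: it conditions on a \emph{single} cluster, say $\mathcal C_0^k$, restricted to a ball $B(0,R_t)$ with $R_t=\exp(t^{2\alpha/d-1})$, and applies the generalised exponential moment bound (Theorem~\ref{theo.generalmoment}, built on Proposition~\ref{prop.npoint}) to the remaining $k-1$ clusters with test function $\varphi=\mathbf 1_{\mathcal C_t(0)}/\sup_z g_{(k-1)(2\alpha-d)}(z,\mathcal C_t(0))$. The quantity that then has to be controlled is $\sup_z g_{(k-1)(2\alpha-d)}(z,\mathcal C_t(0))$, and for this the paper \emph{reuses verbatim} the multiscale decomposition into the sets $\Lambda_i$ and the good event $\mathcal E$ from Section~\ref{sec.proof.main}. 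On $\mathcal E$ (whose complement has probability $\le e^{-c t^{2\alpha/d-1}}$), this sup is at most $Ct^{1-(k-1)(2\alpha/d-1)}$ in case $(i)$ and $C\log t$ in case $(ii)$, which after plugging into Theorem~\ref{theo.generalmoment} gives the correct exponent $t^{2\alpha/d-1}$.

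\textbf{Why your approach does not give the stated exponent.} You try to bound $\mathfrak g(A)=\sup_y g_{2\alpha-d}(y,A)$ for $A=\bigcap_{i\le k-1}\mathcal C_0^i$ directly by moments. In case $(i)$ the conditions $(k-1)(2\alpha-d)<d$ and $k(2\alpha-d)>d$ do ensure that $\|g_{(k-1)(2\alpha-d)}\!*g_{2\alpha-d}(y,\cdot)\|_\infty\le C$, so Theorem~\ref{theo.generalmoment} applies with $\varphi=g_{2\alpha-d}(y,\cdot)$ and yields $\mathbb E[X_y^n]\le C^n(n!)^{k-1}$, hence $\mathbb P(X_y>s)\le 2e^{-c s^{1/(k-1)}}$. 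After a union over $y$ in a large ball and your optimisation $\inf_s\{e^{-cs^{q}}+e^{-ct/s}\}$, this gives $q=1/(k-1)$ and exponent $q/(q+1)=1/k$. But $1/k$ equals $2\alpha/d-1$ only at the lower endpoint $\alpha=(k+1)d/(2k)$; for every $\alpha$ strictly inside the interval you fall short of the claimed $t^{2\alpha/d-1}$. Your ``iterated convolution'' sketch does not, and cannot, improve the $(n!)^{k-1}$ growth to the $\alpha$-dependent power that $q=\tfrac{2\alpha-d}{2d-2\alpha}$ would require: the $(n!)^{k-1}$ comes from the Jensen step over $|\mathbb T_n|^{k-2}$ trees, and the edge-by-edge summation contributes only a $C^n$ factor, never a fractional power of $n!$.

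In short, the multiscale analysis on a single cluster (Section~\ref{sec.proof.main}) is not optional here; it is precisely what upgrades the exponent from $1/k$ to $2\alpha/d-1$.
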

Note that if $\alpha>3d/4$ (which corresponds to Part $(i)$ with $k=2$) then the result is exactly the same as  Theorem~\ref{LD.intersection.finite}. As for Part $(ii)$, we believe that the logarithm appearing in our upper bound should not be there. 

\vspace{0.1cm}
The main new ingredient in the proof is a general upper bound on the $n$-point function, which refines the tree-graph inequality of Aizenman and Newman~\cite{AN84}.

The tree-graph inequality expresses the probability that $n$ given points $x_1,\dots,x_n$ are all in the percolation cluster 
of another point $z$ as a double sum. The first sum is about a finite number of binary trees with $n$
leaves marked with $x_1,\dots,x_n$ and the root marked with $z$, whereas the second sum runs over the $n-2$ marks of the internal nodes different from the root, 
each mark running over $\mathbb Z^d$. Now, the object we sum is a product of Green's
functions evaluated at the differences of marks over edges of the tree. When dealing with the $n$-th power of the intersection of say $k$ independent clusters containing a fixed point $z$, we sum over $x_1,\dots,x_n$ the $k$-th power of the $n$-point function.
In order to transfer the $k$-th power over Green's functions, Jensen's inequality is powerful but cannot afford an infinite sum over internal nodes' marks. In our approach, we rule out this problem by expressing the $n$-point function as a single finite sum over a family of trees which we describe explicitly.


\vspace{0.2cm}
To be more precise now, we show that 
there exists a constant $C>0$, such that for any $x_1,\dots,x_n,z\in V$, one has with $\underline{x} = (x_1,\dots,x_n)$, 
\begin{equation}\label{npoint}
\mathbb P(x_1,\dots,x_n \in \mathcal C(z))\le  C^n \cdot \sum_{\mathfrak t \in \mathbb T_n} G_\alpha(\mathfrak t,\underline x) \cdot g_\alpha(x_{\mathfrak t},z), 
\end{equation}
where $\mathbb T_n$ is a certain set of finite plane trees with $n$ marked and labelled vertices; $x_{\mathfrak t}\in\{x_1,\dots,x_n\}$ is the point associated to the first labelled  vertex of $\mathfrak t$ in the lexicographical (or depth-first search) order, and $G_\alpha(\cdot,\cdot)$ is some function, which is equal to the product over the edges $\{i,j\}$ of an auxiliary tree associated to $\mathfrak t$ with vertex set $\{1,\dots,n\}$, of terms of the form $g_{2\alpha - d}(x_i,x_j)$; see Section~\ref{sec.manyclusters} and Proposition~\ref{prop.npoint} for more precise definition and statement. 

\vspace{0.2cm}
We stress that the trees involved in $\mathbb T_n$ are not necessarily binary, in particular our proof follows a slightly different approach from the one in~\cite{AN84}, which in particular has the advantage of being also well suited to the setting of Branching random walks, as we shall see later.

\vspace{0.2cm}
Finally let us mention that a similar result as Theorem~\ref{theo.generalcluster} could be proved as well for the intersection of independent IIC. Since the argument is entirely analogous to the proof of Theorem~\ref{thm.IIC}, we refrain from adding more details here.

\subsection{Intersection of clusters of different types}\label{subsec.twotypes}
Our techniques allow to consider as well the intersection of two  clusters of different types. For instance, we can show the following result. 
\begin{theorem}
Let $\mathcal G$ and $\widetilde{\mathcal G}$ two random graphs on the same metric space $(V,d_V)$, satisfying both the BK inequality, and respectively~\eqref{Halpha} and $(\mathcal H_\beta)$, for some $\alpha,\beta \in(\frac{d}{2},d)$, with $\alpha\le \beta$, and $2\alpha + 2\beta > 3d$. Let $\mathcal C_0$ and $\widetilde{\mathcal C}_0$ the clusters of the origin in $\mathcal G$ and $\widetilde{\mathcal G}$ respectively. There exsits a constant $\kappa>0$, such that 
\begin{equation}\label{intersection.twodifferent}
\mathbb E\Big[\exp\big(\kappa\cdot  |\mathcal C_0\cap \widetilde{\mathcal C}_0|^\gamma\big)\Big]<\infty, \quad \textrm{with}\  
\gamma = \frac{2\alpha - d}{d+ 2\alpha - 2\beta}.
\end{equation}
\end{theorem}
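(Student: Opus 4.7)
The plan is to follow the two-step blueprint of Theorem~\ref{LD.intersection.finite}, adapted to the asymmetric setting. Write $M(A) := \|g_{2\alpha - d}(\cdot, A)\|_\infty$, and first condition on $\widetilde{\mathcal C}_0$. Applying Theorem~\ref{thm.expmoment} to $\mathcal C_0$ with the admissible test function $\varphi = \mathbf 1_{\widetilde{\mathcal C}_0}/M(\widetilde{\mathcal C}_0)$, together with Chebyshev's inequality, yields
\[
\mathbb P\bigl(|\mathcal C_0 \cap \widetilde{\mathcal C}_0| > t \,\big|\, \widetilde{\mathcal C}_0\bigr) \,\le\, 2\exp\!\bigl(-\kappa\, t/M(\widetilde{\mathcal C}_0)\bigr),
\]
reducing the problem to a tail estimate on the random variable $M(\widetilde{\mathcal C}_0)$. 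The standard splitting
\[
\mathbb P\bigl(|\mathcal C_0 \cap \widetilde{\mathcal C}_0| > t\bigr) \,\le\, 2\exp(-\kappa t/m) + \mathbb P\bigl(M(\widetilde{\mathcal C}_0) > m\bigr),
\]
followed by an optimization in $m$, shows that the announced exponent $\gamma = (2\alpha - d)/(d + 2\alpha - 2\beta)$ follows as soon as one proves
\begin{equation*}
\mathbb P\bigl(M(\widetilde{\mathcal C}_0) > m\bigr) \,\le\, C\exp\!\bigl(-c\, m^{\delta}\bigr), \qquad \delta := \tfrac{2\alpha - d}{2(d - \beta)},
\end{equation*}
the optimal choice $m \asymp t^{1/(\delta+1)}$ giving exponent $\delta/(\delta+1) = \gamma$.

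The heart of the argument is therefore to establish this stretched-exponential tail on $M(\widetilde{\mathcal C}_0)$. I would first use a triangle-inequality argument on $g_{2\alpha-d}$ to reduce, up to a universal multiplicative constant, the supremum defining $M(\widetilde{\mathcal C}_0)$ to a supremum over $x$ in the cluster $\widetilde{\mathcal C}_0$ itself. A dyadic decomposition then gives
\[
g_{2\alpha - d}(x, \widetilde{\mathcal C}_0) \,\le\, C \sum_{k \ge 0} 2^{-k(2\alpha - d)} \, \bigl|\widetilde{\mathcal C}_0 \cap B(x, 2^k)\bigr|,
\]
and Corollary~\ref{cor.1} applied to $\widetilde{\mathcal C}_0$ with parameter $\beta$, combined with the elementary bound $\sup_y g_{2\beta - d}(y, B(x,r)) \asymp r^{2(d - \beta)}$, gives
\[
\mathbb P\bigl(|\widetilde{\mathcal C}_0 \cap B(x,r)| > s\bigr) \,\le\, 2\exp\!\bigl(-c s / r^{2(d-\beta)}\bigr).
\]
The typical contribution of scale $k$ to $M(\widetilde{\mathcal C}_0)$ is thus of order $2^{-k(2\alpha - d)} \cdot 2^{2k(d - \beta)} = 2^{k(3d - 2\alpha - 2\beta)}$, a geometric series which is summable precisely under $2\alpha + 2\beta > 3d$, so the typical value of $M(\widetilde{\mathcal C}_0)$ is bounded. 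A large deviation $M(\widetilde{\mathcal C}_0) > m$ forces at least one dyadic scale to exceed its typical value, and combining the per-scale bounds with a chaining/union bound over $x$ in the cluster should produce the required tail with exponent $\delta$.

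It is worth emphasizing the dual role played by the hypothesis $2\alpha + 2\beta > 3d$: by Riesz composition it is precisely equivalent to the uniform bound $\|g_{2\beta - d} * g_{2\alpha - d}\|_\infty < \infty$, so that the function $\varphi(y) = g_{2\alpha - d}(x, y)/C_*$ is an admissible test function in Theorem~\ref{thm.expmoment} applied to $\widetilde{\mathcal C}_0$; this gives an exponential moment on $g_{2\alpha - d}(x, \widetilde{\mathcal C}_0)$ for each fixed $x$, which is the averaged counterpart of the dyadic reasoning above. The hard part will be the chaining over $x$: upgrading the per-point exponential moment to a supremum with the correct stretched-exponential exponent $\delta$ requires a careful union bound over a random net adapted to the cluster, in the spirit of the multiscale arguments of~\cite{AS23}.
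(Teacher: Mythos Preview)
Your reduction to a tail bound on $M(\widetilde{\mathcal C}_0)=\sup_x g_{2\alpha-d}(x,\widetilde{\mathcal C}_0)$ is clean, and the arithmetic showing that a tail exponent $\delta=(2\alpha-d)/(2(d-\beta))$ would yield the announced $\gamma$ is correct. The gap is that this tail exponent on $M(\widetilde{\mathcal C}_0)$ is not achievable when $\alpha<\beta$: you have conditioned on the wrong cluster. If one runs the multiscale machinery of Section~\ref{sec.proof.main} on the $\beta$-cluster $\widetilde{\mathcal C}_0$ (with $\rho_i r_i^{2\beta-d}\asymp m^{\delta}$ as forced by~\eqref{cond.rrho}), the small-scale contribution already gives
\[
g_{2\alpha-d}\bigl(x,B(x,r_0)\bigr)\;\asymp\; r_0^{2(d-\alpha)}\;\asymp\; m^{\,\delta\cdot \frac{2(d-\alpha)}{2\beta-d}},
\]
and requiring this to be at most $m$ forces $\delta\le \delta':=(2\beta-d)/(2(d-\alpha))$. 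A short computation shows $\delta'<\delta$ whenever $\alpha<\beta$ and $2\alpha+2\beta>3d$, so the best the scheme can produce is the final exponent $\delta'/(\delta'+1)=(2\beta-d)/(d+2\beta-2\alpha)$, which is strictly smaller than $\gamma$. Heuristically, the cheapest way for $\widetilde{\mathcal C}_0$ to make $M(\widetilde{\mathcal C}_0)$ large is to pack a \emph{small} ball at density one, which costs only $\exp(-c\,m^{\delta'})$; this strategy is irrelevant for the intersection event but ruins the conditional bound $\exp(-\kappa t/M)$.

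The paper fixes this by swapping roles: it decomposes the $\alpha$-cluster $\mathcal C_0$ via the sets $\Lambda_i$ of~\eqref{setsLambdai}, takes $R_t=\exp(t^\gamma)$, and shows that on the good event $\mathcal E$ one has $\sup_x g_{2\beta-d}(x,\mathcal C_t(0))\le Ct^{1-\gamma}$ (see Remark~\ref{rem.alphabeta}); one then applies Corollary~\ref{cor.1} to $\widetilde{\mathcal C}_0$. With the roles this way round the small-scale piece is $r_0^{2(d-\beta)}$ with $r_0^{2\alpha-d}\asymp t^\gamma$, which is exactly $t^{1-\gamma}$, and the other pieces are summable precisely under $2\alpha+2\beta>3d$. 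Your outline becomes correct once you interchange $\mathcal C_0$ and $\widetilde{\mathcal C}_0$, control $\sup_x g_{2\beta-d}(x,\mathcal C_0)$ instead, and carry out the multiscale argument rather than deferring it.
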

The proof uses the same arguments as for Theorem~\ref{LD.intersection.finite} $(i)$. Some more details will be given in Remark~\ref{rem.alphabeta}. A heuristic explanation of the exponent $\gamma$ is as follows. If one assumes that the upper bound given by Corollary~\ref{cor.1} is sharp, then for each cluster, the probability to cover at least $t$ points of a ball $B(0,r)$ would decay as $\exp(-t / r^{2d-2\alpha})$, when it satisfies \eqref{Halpha}. Therefore, if the cluster satisfying~\eqref{Halpha} spends a time $t_1$ there, and the cluster satisfying $(\mathcal H_\beta)$ a time $t_2$, the two costs are balanced if $t_1\, r^{2\alpha} = t_2 \, r^{2\beta}$. Furthermore, using independence between the two clusters, and making the usual approximation that the points of each cluster are uniformly spread in the ball, the overlap between the two clusters would then be of order $t_1t_2/r^d$, and thus we want $t_1t_2/r^d = t$. Given this, the cost is minimized if $t_1/r^d$ is maximized, i.e. of order $1$, which yields a  cost $\exp(-t^\gamma)$, with $\gamma$ as above.

\vspace{0.1cm}
Note that this heuristic suggests that if one considers more than two clusters, say $k\ge 3$ clusters satisfying \eqref{Halpha} with parameters respectively $\alpha_1\ge \dots\ge \alpha_k$, then in case when $2(\alpha_1+\alpha_2)>3d$, the cost should be of the same order as if we would just ask for the two first clusters (associated to $\alpha_1$ and $\alpha_2$) to share more than $t$ common points, because once the second cluster realizes a density of order $1$ in a ball $B(0,r)$, the cost for the other clusters to also cover a fraction of order $1$ of this ball is smaller, and this just makes the total intersection of all clusters decay by a constant factor. On the other hand when $2(\alpha_1+\alpha_2)\le 3d$, we expect a different scenario. Assuming that for some $3\le i\le k$ (necessarily unique), one has $ 2(\alpha_1+\dots+\alpha_i) > (i+1)d$ 
and $2(\alpha_1+ \dots +\alpha_{i-1})\le id$, we expect a cost of order $\exp(-t^{\gamma_i})$, with 
$\gamma_i = \tfrac{2\alpha_i -d}{d+2(i-1)\alpha_i - 2(\alpha_1+\dots+\alpha_{i-1})}$.
However, except for some cases (e.g. when all $\alpha_i$'s are equal), proving this result in full generality seems to require new ideas.

\subsection{Intersection of Simple Random Walk (SRW) ranges} 
As a warmup before we investigate the more difficult case of BRW, let us consider the intersection of SRW ranges. Recall the famous result of Erd\"os and Taylor~\cite{ET}, which asserts that if $(R_\infty^i)_{i\ge 1}$, are independent SRW ranges, then 
\begin{align*}
 |R_\infty^1 \cap R_\infty^2| = \infty \ \textrm{ a.s.} \quad & \Longleftrightarrow \quad d\le 4. \\
 |R_\infty^1 \cap R_\infty^2\cap R_\infty^3| = \infty \ \textrm{ a.s.} \quad & \Longleftrightarrow\quad  d\le 3. \\
|R_\infty^1 \cap R_\infty^2\cap R_\infty^3 \cap R_\infty^4| = \infty\ \textrm{ a.s.} \quad & \Longleftrightarrow \quad d\le 2.
\end{align*}
While these results can be proved using an elementary second moment method, it is a much more difficult task to estimate the tail distribution of the intersection of these ranges (in case they are a.s. finite). Here we provide the following answer to this question. 

\begin{theorem}\label{theo.SRW}
Let $R_\infty^k$, $k\ge 1$, be independent SRW ranges on $\mathbb Z^d$. 
\begin{enumerate}  
\item[(i)] If $d\ge 5$, then for any $k\ge 2$, there exist positive constants $c_1,c_2$, such that for any $t>1$, 
$$\exp(-c_1\, t^{1-\frac 2d}) \le \mathbb P(|R_\infty^1\cap \dots \cap R_\infty^k|>t) \le  \exp(-c_2\, t^{1-\frac 2d}). $$ 
\item[(ii)] If $d=4$, then for any $k\ge 3$, there exist positive constants $c_1,c_2$, such that for any $t>e$, 
$$\exp(-c_1\, \sqrt{t} ) \le \mathbb P(|R_\infty^1\cap \dots \cap R_\infty^k|>t) \le  \exp\big(-c_2\, (\frac{t}{\log t})^{1/2}\big).$$
\item[(iii)] If $d=3$, then for any $k\ge 4$, there exist positive constants $c_1,c_2$, such that for any $t>e$, 
$$\exp(-c_1\, t^{1/3}) \le \mathbb P(|R_\infty^1\cap \dots \cap R_\infty^k|>t) \le  \exp(-c_2\, (\frac t{\log t})^{1/3}). $$
\end{enumerate}
\end{theorem}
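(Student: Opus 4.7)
The plan is to split the theorem into a common lower-bound construction, a reduction of case (i) to the two-walk result of~\cite{AS23}, and a moment-based argument for the borderline configurations $(d,k)=(4,3)$ and $(d,k)=(3,4)$ of cases (ii) and (iii).

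For the lower bounds, fix $r\ge 1$ and a set $A_r\subset B(0,r)$ of size $\lfloor c r^d\rfloor$. A classical localization estimate — forcing each walk to spend time of order $r^d$ inside $B(0,r)$ — shows that a single SRW contains $A_r$ in its range with probability at least $\exp(-Cr^{d-2})$ in every transient dimension. Independence of the $k$ walks yields
\begin{equation*}
\mathbb P\bigl(A_r\subseteq R_\infty^1\cap\dots\cap R_\infty^k\bigr)\ge \exp(-Ckr^{d-2}),
\end{equation*}
and on this event the intersection has at least $cr^d$ points. Choosing $r\sim t^{1/d}$ gives the claimed $\exp(-c_1 t^{1-2/d})$ in all three parts. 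For the upper bound in (i) the case $k=2$ is the main result of~\cite{AS23}, and the extension to $k\ge 3$ is immediate from the pointwise inequality $|R_\infty^1\cap\dots\cap R_\infty^k|\le |R_\infty^1\cap R_\infty^2|$.

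The more delicate upper bounds in (ii) and (iii) reduce by the same monotonicity in $k$ to the extremal configurations $(4,3)$ and $(3,4)$, precisely the smallest $k$ for which $\mathbb E[|R_\infty^1\cap\dots\cap R_\infty^k|]<\infty$. We mirror the moment argument of Theorem~\ref{theo.generalcluster}$(ii)$, replacing the percolation $n$-point inequality~\eqref{npoint} by the SRW analogue
\begin{equation*}
\mathbb P(x_1,\dots,x_n\in R_\infty)\le \tfrac{C^n}{G(0,0)^n}\sum_{\sigma\in\mathfrak S_n}G(0,x_{\sigma(1)})\prod_{j=1}^{n-1}G(x_{\sigma(j)},x_{\sigma(j+1)}),
\end{equation*}
obtained by enumerating the first-passage order of the walk among $x_1,\dots,x_n$ and invoking the strong Markov property. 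Raising this to the $k$-th power, summing over $x_1,\dots,x_n\in\mathbb Z^d$, and regrouping the Green's functions by edges of an auxiliary spanning tree via Jensen exactly as in Proposition~\ref{prop.npoint}, one obtains $\mathbb E[|R_\infty^1\cap\dots\cap R_\infty^k|^n]\le (Cn)^{n/(1-2/d)}$ up to a logarithmic factor per power, driven by a capacity estimate on a ball of radius $\sim t^{1/d}$. A Chebyshev bound together with a dyadic decomposition across spatial scales then converts this into the stretched exponential tail with the $(t/\log t)$ correction in (ii) and (iii). The chief obstacle is exactly that borderline logarithm: the iterated Green's function convolutions driving the moment computation are only marginally integrable at these critical pairs $(d,k)$, paralleling the case $\alpha=\tfrac{kd}{2(k-1)}$ of Theorem~\ref{theo.generalcluster}$(ii)$; removing the $\log t$ appears to require a strictly finer control on the overlap structure than the moment method provides, although we conjecture, as for that theorem, that the true tail matches the lower bound without any logarithmic correction.
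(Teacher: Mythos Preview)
Your high-level plan aligns with the paper's --- localization in a ball of radius $\sim t^{1/d}$ for the lower bounds, monotonicity plus the two-walk result for (i), and the critical-case mechanism of Theorem~\ref{theo.generalcluster} for (ii) and (iii) --- but there is one genuine gap and one imprecision.

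The gap is in your moment computation for (ii) and (iii). The procedure you actually describe --- raise the $n$-point bound to the $k$-th power, apply Jensen over the $n!$ permutations, then sum the resulting $g_{d-2}^k\sim g_{k(d-2)}$ factors over $x_1,\dots,x_n\in\mathbb Z^d$ --- yields $\mathbb E\big[|R^1_\infty\cap\dots\cap R^k_\infty|^n\big]\le C^n(n!)^k$, since $k(d-2)>d$ in both critical pairs and the iterated convolutions converge outright. This gives a tail exponent $1/k$, not the claimed $1-2/d$; for $(d,k)=(4,3)$ that is $1/3$ versus $1/2$. Your asserted bound $(Cn)^{n/(1-2/d)}$ simply does not follow from this calculation. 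What the paper does, following the proof of Theorem~\ref{theo.generalcluster}, is a two-step argument: freeze one range $R^k_\infty\cap B(0,R_t)$ with $R_t=\exp(t^{1-2/d})$, apply the $(k-1)$-walk analogue of Theorem~\ref{theo.generalmoment} conditionally (yielding exponent $1/(k-1)$ subject to a $g_{(k-1)(d-2)}$-constraint on the test function), and then control $\sup_z g_{(k-1)(d-2)}(z,R^k_\infty\cap B(0,R_t))$ via the multiscale event $\mathcal E$ for a single SRW range. In the critical pairs one has exactly $(k-1)(d-2)=d$, so this supremum is $\le C\log t$ on $\mathcal E$, and since $1/(k-1)=1-2/d$ there, the exponent comes out as $(t/\log t)^{1-2/d}$. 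Your ``dyadic decomposition across spatial scales'' gestures at $\mathcal E$, but the conditioning step that converts $1/k$ into $1/(k-1)$ is missing from your account.

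The imprecision is in the lower bound: forcing a walk to spend time of order $r^d$ in $B(0,r)$ gives it a range of size $\Theta(r^d)$ there, but does not by itself make it contain a \emph{prescribed} set $A_r$ of that size --- covering a fixed set of volume $cr^d$ typically incurs an extra logarithmic factor in the cost. The paper sidesteps this via the iterative estimate~\eqref{lower.SRW}, namely $\mathbb P(|R_\infty\cap A|>|A|/2)\ge\exp(-c|A|/r^2)$ for any $A\subset B(0,r)$ of density $\ge\rho$, proved with excursions between $B(0,r)$ and $\partial B(0,2r)$ and a second-moment argument; applying it successively to $A=B(0,r)$, then $A=R^1_\infty\cap B(0,r)$, and so on, delivers the sharp $\exp(-c_1 t^{1-2/d})$.
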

We note that prior to this result and until recently, not much was known, apart from the lower bounds which are somewhat standard estimates. Concerning the upper bounds, a weaker version of Part $(i)$ was first proved in~\cite{KMSS}, with the exponent $1-2/d$ replaced by $1-2/d-\varepsilon$, for arbitrarily small $\varepsilon>0$ (with the constant $c_2$ depending on $\varepsilon$). It has remained an open problem for about thirty years to remove the factor  $\varepsilon$ from this estimate and identify the right constant in the exponential, until it could be solved in~\cite{AS23,BBH}.

\subsection{Intersection of BRW ranges} 
We now discuss the more delicate case of critical Branching random walks on $\mathbb Z^d$, starting with some definition. 

\vspace{0.1cm}
Fix a probability measure $\mu$ with mean one, such that $\mu(1)<1$, and  $\sum_{k\ge 0} e^{ck}\mu(k)<\infty$, for some $c>0$. Denote by $\mathcal T_c$ the Bienaym\'e-Galton-Watson (BGW) tree with offspring distribution $\mu$, and consider the associated BRW, which we view here as the random walk $(S_v)_{v\in \mathcal T_c}$, indexed by $\mathcal T_c$. For simplicity we will assume that the jumps of the walks are distributed according to the uniform measure on the neighbors of the origin, but straightforward adaptations of our proofs would work as well for centred and finitely supported measures. We also denote by $\mathcal T_\infty$ either the infinite invariant BGW tree, see e.g.~\cite{ASS23} for a definition, or alternatively the BGW tree conditioned to be infinite, also called Kesten's tree, see~\cite{Kestree}. These two trees are made of a spine, which is a copy of $\mathbb N$, to which are attached independent critical BGW trees at each vertex of the spine (more precisely the offspring distribution of vertices on the spine differs from the one of other vertices, but this is irrelevant in all our proofs). 
Then we denote by $\mathcal R_c$ and $\mathcal R_\infty$ the ranges of respectively a critical BRW and a random walk indexed by $\mathcal T_\infty$, all starting from the origin.

\vspace{0.2cm}
Now consider $(\mathcal R_\infty^i)_{i\ge 1}$ a sequence of independent random variables distributed as $\mathcal R_\infty$, on $\mathbb Z^d$, with $d\ge 5$. Similarly as for the SRW, a second moment method yields  
\begin{align*}
|\mathcal R_\infty^1 \cap \dots \cap  \mathcal R_\infty^k| < \infty\ \textrm{ a.s.} \quad & \Longleftrightarrow \quad k> \frac{d}{d-4}.  
\end{align*}
Our main result reads as follows. 

\begin{theorem}\label{thm.BRW}
Let $(\mathcal R_c^i)_{i\ge 1}$ and $(\mathcal R_\infty^i)_{i\ge 1}$, be independent copies of $\mathcal R_c$ and $\mathcal R_\infty$ respectively, on $\mathbb Z^d$, with $d\ge 5$. 
\begin{enumerate}
    \item[(i)] If $d=7$ or $d\ge 9$, then for any $k> \frac d{d-4}$, there exist positive constants $c_1,c_2$, such that for any $t>1$,  
    $$\exp(-c_1\cdot t^{1- \frac 4d})\le \mathbb P(|\mathcal R_c^1 \cap\dots\cap \mathcal  R_c^k|>t) \le \mathbb P(|\mathcal R_\infty^1 \cap\dots\cap \mathcal R_\infty^k|>t) \le  \exp(-c_2\cdot t^{1-\frac 4d}). $$
    \item[(ii)] If $d\in \{5,6,8\}$, then for any $k> \frac d{d-4}$, there exist positive constants $c_1,c_2$, such that for any $t>e$,
    $$\exp(-c_1\cdot t^{1- \frac 4d})\le \mathbb P(|\mathcal R_c^1 \cap\dots\cap \mathcal  R_c^k|>t) \le \mathbb P(|\mathcal R_\infty^1 \cap\dots\cap \mathcal R_\infty^k|>t) \le  \exp\Big(-c_2\cdot (\frac{t}{\log t})^{1-\frac 4d}\Big). $$
\end{enumerate}
\end{theorem}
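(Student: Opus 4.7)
The plan is to prove Theorem \ref{thm.BRW} by two essentially different arguments. The upper bound rests on the BRW analog of Proposition \ref{prop.npoint}, namely Proposition \ref{prop.localtimesBRW} on joint moments of local times, which plays here the role that the tree-graph estimate plays in the proof of Theorem \ref{theo.generalcluster}. Throughout we use that for $d\ge 5$ one has $\mathbb P(x\in\mathcal R_\infty)\asymp\|x\|^{-(d-4)}$, so that~\eqref{Halpha} holds with $\alpha=d-4$. A distinctive feature of BRW versus percolation is that the capacity emerging from the BRW moment estimate is of order $\alpha$, not $2\alpha-d$, which is why the exponent obtained is $1-\tfrac{4}{d}$ rather than $\tfrac{2\alpha}{d}-1=1-\tfrac{8}{d}$.

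For the lower bound we use the standard \emph{filling a ball} construction. Setting $r=\lfloor (t/c)^{1/d}\rfloor$, we force each of the $k$ independent critical trees $\mathcal T_c^i$ to produce a BRW whose range covers a positive fraction of $B(0,r)$. By the one-arm estimates for critical BRW established by Le Gall--Lin and Zhu, the probability of this event for a single tree is at least $\exp(-Cr^{d-4})$. Independence of the $k$ copies then gives joint probability at least $\exp(-Ckr^{d-4})\ge\exp(-C't^{1-4/d})$, and on this event the intersection has size at least $c|B(0,r)|\ge ct$.

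For the upper bound we combine an exponential moment estimate with a multiscale argument. The first step is to prove: for some $\kappa>0$, any finite $A\subset\mathbb Z^d$, and any $\varphi:\mathbb Z^d\to[0,\infty)$ supported in $A$ with $\|g_{d-4}*\varphi\|_\infty\le 1$,
$$\mathbb E\Big[\exp\Big(\kappa\sum_{x\in\mathcal R_\infty}\varphi(x)\Big)\Big]\le C(1+\|\varphi\|_1),$$
by expanding the exponential and dominating the moments of $\sum_x\varphi(x)\ell(x)$ via Proposition \ref{prop.localtimesBRW}, exactly in the inductive spirit of the proof of Theorem \ref{thm.expmoment}. A Chebyshev argument then yields the BRW analog of Corollary \ref{cor.1}, which for $A=B(0,r)$ gives the single-ball rate $C\exp(-ct/r^4)$. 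The $k$-fold intersection inside $B(0,r_0)$ with $r_0=\lfloor t^{1/d}\rfloor$ is handled by conditioning on $\mathcal R_\infty^1,\dots,\mathcal R_\infty^{k-1}$ and applying this estimate to $\mathcal R_\infty^k$ against $B_{k-1}=\bigcap_{i<k}\mathcal R_\infty^i\cap B(0,r_0)$; a recursive optimization of the cutoff $\{|B_{k-1}|\le s\}$ versus $\{|B_{k-1}|>s\}$ preserves the rate $\exp(-ct^{1-4/d})$ for all $k>d/(d-4)$. Outside $B(0,r_0)$ we decompose into dyadic shells $S_j$, bound $\mathbb E[|\bigcap_i\mathcal R_\infty^i\cap S_j|^n]$ via Proposition \ref{prop.localtimesBRW}, use the hypothesis $k(d-4)>d$ to ensure convergence of the resulting geometric series in $j$, and optimize $n\asymp t^{1-4/d}$ via Chebyshev.

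The main obstacle, and the source of the dichotomy between (i) and (ii), lies in the dimensions $d\in\{5,6,8\}$: in those cases the geometric series arising in the shell decomposition is only borderline convergent, and this logarithmic divergence must be compensated by a suboptimal choice of moment order $n$, which produces the extra factor $\log t$ in the exponent. This is exactly analogous to the mechanism responsible for the $\log t$ factor in Theorem \ref{theo.generalcluster}(ii) and in Theorem \ref{theo.SRW}(ii)--(iii), and as in those statements we expect the logarithm to be an artifact of the method rather than a reflection of the true behavior.
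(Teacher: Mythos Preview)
Your lower bound sketch has a genuine gap. Even if each $\mathcal R_c^i$ covers a positive fraction of $B(0,r)$, the intersection $\bigcap_i \mathcal R_c^i$ need not cover any fixed fraction: $k$ sets each of density $1/2$ in a ball can have empty intersection. The paper resolves this by proving (Proposition~\ref{prop.lowerbound}) the stronger statement that for \emph{any} $A\subset B(0,r)$ with $|A|\ge\rho|B(0,r)|$, one has $\mathbb P(|\mathcal R_c\cap A|\ge |A|/2)\ge\exp(-cr^{d-4})$, and then applies it iteratively with $A=B(0,r)$, then $A=\mathcal R_c^1\cap B(0,r)$, and so on. This estimate is not in Le~Gall--Lin or Zhu; those papers give hitting probabilities for single points, not the probability of covering a positive fraction of a ball. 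The paper's proof of Proposition~\ref{prop.lowerbound} uses the \emph{wave} decomposition of~\cite{AS24+}: one shows that each wave adds order $r^4$ new points of $A$ with uniformly positive probability, and that producing $r^{d-4}$ successive waves costs at most $\exp(-cr^{d-4})$.

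Your upper bound outline also diverges from the paper and has issues. First, the remark that ``\eqref{Halpha} holds with $\alpha=d-4$'' is misleading: BRW does not satisfy the BK inequality, so the percolation framework is not directly applicable, and in the local-time bounds of Proposition~\ref{prop.localtimesBRW} the relevant Green exponent is $d-2$ (giving $2\alpha-d=d-4$ in the capacity), which is exactly the percolation mechanism, not a ``distinctive feature''. Second, your recursive scheme (condition on $k-1$ ranges, apply the single-cluster bound to the last) is problematic in low dimensions: for $d\le 8$ the intersection of two infinite BRW ranges is a.s.\ infinite, so the induction has no base case, and even restricted to $B(0,r_0)$ you would need tail bounds on $|\mathcal R_\infty^1\cap\dots\cap\mathcal R_\infty^{k-1}\cap B(0,r_0)|$ of the correct stretched-exponential form, which is what you are trying to prove. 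The paper instead follows the route of Theorem~\ref{theo.generalcluster}: raise the $n$-point bound of Proposition~\ref{prop.localtimesBRW} to the $k$-th power, use H\"older and $|\mathbb T_n|\le C^n n!$ to pass to a single tree with edge weights $g_{k(d-4)}$, and plug this into the multiscale/capacity machinery. The logarithmic loss in $d\in\{5,6,8\}$ then arises exactly as in Theorem~\ref{theo.generalcluster}(ii), not from a shell decomposition.
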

We note that in dimension $d\ge 9$ the upper bound for general $k\ge 2$ follows from the upper bound for $k=2$, which can be done by following a similar argument as in~\cite{AS23}, using the exponential moment bound already proved in~\cite{ASS23}. In lower dimension we follow the same strategy as for the proofs of Theorems~\ref{theo.generalcluster}  and~\ref{theo.SRW}. In particular this requires us  to prove an upper bound on the joint moments of the local times, which is similar to~\eqref{npoint}, and might also be more tractable in practice than the exact diagrammatic expansion proved in~\cite{AHJ}. 
More precisely, it takes the following form: for $x\in \mathbb Z^d$, denote by 
\begin{equation} \label{localtime.def}
\ell_c(x) = \sum_{v\in \mathcal T_c} \1\{S_v = x\}, \quad{and}\quad \ell_\infty(x) = \sum_{v\in \mathcal T_\infty} \1\{S_v = x\},
\end{equation}
the local times at $x$ respectively for a critical BRW and a random walk indexed by $\mathcal T_\infty$. 
Then there exists a constant $C>0$, such that for any $x_1,\dots,x_n,z\in \mathbb Z^d$, (possibly with repetition), one has with $\underline x=(x_1,\dots,x_n)$,
$$\mathbb E_z\Big[\prod_{i=1}^n \ell_c(x_i)\Big] 
\le C^n \sum_{\mathfrak t \in \mathbb T_n} G_{d-2}(\mathfrak t,\underline x) \cdot g_{d-2}(x_{\mathfrak t},z), $$
where $\mathbb T_n$, $x_\mathfrak t$, and $G_{d-2}$ are defined exactly as in the setting of percolation clusters, and 
$$\mathbb E_z\Big[\prod_{i=1}^n \ell_\infty(x_i)\Big] 
\le C^n \sum_{\mathfrak t \in \mathbb T_n} \widetilde G(\mathfrak t,\underline x,z),$$
with $\widetilde G$ some appropriate  modification of the function $G_{d-2}$,
see Proposition~\ref{prop.localtimesBRW} and the definitions preceding it for a more precise statement.

\vspace{0.2cm}
Concerning the lower bounds, we show that the required intersection can be done, at the right cost, in a ball with radius of order $t^{1/d}$, centered at the origin, on which all the BRWs spend a time of order the volume of the ball. The idea for proving this is to use the notion of waves, which were introduced in~\cite{AS24+}. Roughly speaking  waves for a branching random walk play the role of excursions for a simple random walk. We show that during each wave, between $B(0,t^{1/d})$ and $B(0,2t^{1/d})^c$, the typical number of new sites visited is of order $t^{4/d}$. Thus one needs order $t^{1-4/d}$ waves, and we show that the cost for this is exponentially small in the number of waves.

\vspace{0.2cm}
To conclude, note that the question of the tail distribution of the intersection of two or more critical ranges makes sense in any dimension $d\ge 1$. In this direction one can show the following.  
\begin{theorem}\label{BRW.lowdim}
One has 
\begin{equation*}
\mathbb P(|\mathcal R_c \cap \widetilde {\mathcal R}_c|>t) \asymp     
\left\{
\begin{array}{ll}
t^{-4/d} & \text{if }d\in \{1,2,3\} \\
t^{-1}\cdot (\log t)^{-2} & \text{if }d=4 \\
t^{-\frac 4{8-d}} & \text{if }d\in \{5,6,7\}. 
\end{array}
\right. 
\end{equation*}
Furthermore, if $d=8$, there exist positive constants $c_1$ and $c_2$, such that for any $t>1$, 
\begin{equation}\label{bounds.d8}
\exp(-c_1\sqrt t) \le \mathbb P(|\mathcal R_c \cap \widetilde {\mathcal R}_c|>t) 
\le \exp\big(-c_2\cdot t^{1/3}\big). \end{equation}
\end{theorem}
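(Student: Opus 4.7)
For the lower bounds in $d\le 7$, the key idea is to condition both BGW trees $\mathcal T_c$ and $\widetilde{\mathcal T}_c$ to have size at least some threshold $N=N(t)$: the Kolmogorov--Yaglom tail $\mathbb P(|\mathcal T_c|\ge N)\asymp N^{-1/2}$ makes this event cost $N^{-1}$. Conditioned on $|\mathcal T_c|\asymp N$, each BRW range lies in a ball of radius $\asymp N^{1/4}$ around the origin, with $|\mathcal R_c|\asymp N^{d/4}$ for $d\le 3$, $\asymp N/\log N$ for $d=4$, and $\asymp N$ for $d\ge 5$; the corresponding occupation density of the ball is $\asymp 1$, $\asymp 1/\log N$, or $\asymp N^{1-d/4}$. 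By independence, the expected intersection size is therefore $\asymp N^{d/4}$, $N/\log^2 N$, or $N^{(8-d)/4}$ respectively, and solving for $N$ so that this equals $t$ yields $N=t^{4/d}$, $N\asymp t(\log t)^2$, or $N=t^{4/(8-d)}$, whose cost $N^{-1}$ matches the claimed lower bounds. The upgrade from expected intersection size to a positive conditional probability of an intersection of size $t$ is handled via a second-moment computation, which follows from the joint local time moment bound of Proposition~\ref{prop.localtimesBRW}.

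The stretched exponential lower bound $\exp(-c_1\sqrt t)$ at $d=8$ is proved instead by the wave decomposition from the proof of Theorem~\ref{thm.BRW}: in a ball of radius $t^{1/8}$, each BRW must complete $\asymp t^{1/2}=t^{1-4/d}$ successful excursions between radii $t^{1/8}$ and $2t^{1/8}$, each contributing at constant cost.

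For the upper bounds in $d\le 3$, the inclusion $\{|\mathcal R_c\cap\widetilde{\mathcal R}_c|>t\}\subset\{|\mathcal R_c|\ge t\}\cap\{|\widetilde{\mathcal R}_c|\ge t\}$, combined with the single-range tail $\mathbb P(|\mathcal R_c|\ge s)\asymp s^{-2/d}$ and independence, gives $t^{-4/d}$. At $d=4$ the naive bound is off by one logarithm; the extra factor is recovered by observing that, since the density of $\mathcal R_c$ in its natural ball is only $1/\log s$, the intersection can reach $t$ only once both ranges are of size $\asymp t\log t$, giving the sharp $t^{-1}(\log t)^{-2}$. For $d\in\{5,6,7\}$ the naive approach is far too weak and I would instead compute high moments
$$\mathbb E\bigl[|\mathcal R_c\cap\widetilde{\mathcal R}_c|^n\bigr]=\sum_{x_1,\dots,x_n}\mathbb P(x_1,\dots,x_n\in\mathcal R_c)^2$$
via the tree-graph bound of Proposition~\ref{prop.localtimesBRW}; a Young-type convolution on the resulting sums over $\mathbb T_n$ of products of $g_{d-2}$ shows that the $n$-th moment is finite iff $n<4/(8-d)$, and Markov's inequality with a spatial truncation at scale $N^{1/4}=t^{1/(8-d)}$ then yields the sharp tail. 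The critical upper bound $\exp(-c_2 t^{1/3})$ at $d=8$ is obtained by transposing to BRW local times the argument of Section~\ref{sec.dim8} for the percolation critical case $\alpha=3d/4$.

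The main obstacle is the precise control of the moment sums and their convolutions near the critical dimensions $d=4$ and $d=8$, where the divergences are only logarithmic; extracting the correct polynomial or stretched-exponential exponent (without $\varepsilon$ loss) and the correct power of $\log t$ requires carefully chosen truncations at each spatial scale. A parallel difficulty is the second-moment step in the lower bound in low dimensions, which relies on a density estimate for $\mathcal R_c$ conditional on $|\mathcal T_c|\ge N$ — a super-Brownian-motion-type fact that must be derived in the discrete setting.
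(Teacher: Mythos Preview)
Your outline is broadly correct and shares the paper's architecture (second-moment lower bounds, moment/Markov upper bounds with spatial truncation, and waves at $d=8$), but the paper makes different concrete choices that bypass the two difficulties you flag at the end.

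For the lower bounds in $d\le 7$, the paper does \emph{not} condition on $\{|\mathcal T_c|\ge N\}$; it conditions instead on the arm event $A=\{\mathcal R_c\cap B(0,r_t)^c\ne\varnothing,\ \widetilde{\mathcal R}_c\cap B(0,r_t)^c\ne\varnothing\}$, with $r_t$ equal to $Ct^{1/d}$, $Ct^{1/4}\sqrt{\log t}$, or $Ct^{1/(8-d)}$ in the three regimes, and sets $X=|\mathcal R_c\cap\widetilde{\mathcal R}_c\cap(B(0,2r_t)\setminus B(0,r_t))|$. Since $X=0$ on $A^c$, one has $\mathbb E[X\mid A]=\mathbb E[X]/\mathbb P(A)$, and both moments of $X$ are computed from the \emph{unconditional} one- and two-point functions ($\mathbb P(x\in\mathcal R_c)\asymp\|x\|^{2-d}$ for $d\ge5$, Zhu's $\|x\|^{-2}/\log\|x\|$ at $d=4$, and $\asymp\|x\|^{-2}$ for $d\le3$). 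This completely avoids your ``density estimate for $\mathcal R_c$ conditional on $|\mathcal T_c|\ge N$'': no super-Brownian-type input is needed. Your route via tree-size conditioning is morally equivalent at the heuristic level, but turning it into a rigorous Paley--Zygmund argument would require exactly the conditional control you identify as problematic.

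For the upper bounds in $d\in\{5,6,7\}$, your plan is right in spirit but the paper does not work with the finiteness threshold $n<4/(8-d)$ for the global moment. It first handles $B(0,r_t)^c$ by a first-moment Markov bound, and then decomposes $B(0,r_t)$ into dyadic shells $\mathcal S_i$ and applies Markov to each with a \emph{shell-restricted} moment of a fixed order: the second for $d=5$, the third for $d=6$, and the fifth for $d=7$. These orders are chosen so that the restricted moment grows like a positive power of the shell radius while the resulting probabilities are summable in $i$; they are strictly larger than $4/(8-d)$, so the corresponding global moments are actually infinite. At $d=4$ the paper does not argue via ``both ranges must have size $\asymp t\log t$''; it uses the same shell decomposition together with Zhu's estimate to control the second moment in each shell, plus the arm probability $r_t^{-4}$ for the outer region. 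Your heuristic there is not yet a proof and would need substantial further work, whereas the annulus/shell approach gives the sharp $\log^{-2}t$ directly.
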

Here $f \asymp g$ means that $f/g$ is bounded from above and below by positive constants. Similar bounds could be shown for the intersection of more than two clusters. 
In dimensions $d\le 7$ the result follows from a soft second moment argument and known bounds on hitting probabilities. In dimension eight, which is critical for this model (see also~\cite{Baran} for the related question of non-intersection of two Branching random walks, where dimension eight is as well critical), we provide two proofs for the upper bound, a first one based on our new bounds on the moments of local times, and another more sophisticated one using similar ideas as for the proof of Theorems~\ref{LD.intersection.finite}  and~\ref{thm.BRW}. Interestingly the two arguments lead to the same exponent $1/3$. Nevertheless, it is still unclear for us, whether this gives the right order of decay. 

\vspace{0.2cm}
We note that one could also show using the same proof as for BRWs that on $\mathbb Z^8$, if the lattice is sufficiently spread-out, the same upper bound as in~\eqref{bounds.d8} holds for the tail distribution of the intersection of two independent critical percolation clusters. Likewise, thanks to the computation of the one-arm exponent in~\cite{KN}, one could   show that on $\mathbb Z^7$, the tail distribution decays as  $t^{-4}$ (here both the upper and lower bounds can be proved).   

\subsection{Intersection of mixtures of SRW and BRW ranges}
Similarly as for~\eqref{intersection.twodifferent}, we can also treat the intersection of one SRW and one critical BRW ranges in any dimension $d>6$. Moreover, by combining this with our results from the last two sections we deduce the following.

\begin{theorem}\label{theo.SRWBRW}
Let $(R_\infty^k)_{k\ge 1}$, and $(\mathcal R_c^k)_{k\ge 1}$, be respectively SRW and critical BRW ranges on $\mathbb Z^d$. 
\begin{enumerate}  
\item[(i)] If $d\ge 5$, then for any $i\ge 2$, and any $j\ge 1$, there exist positive constants $c_1,c_2$, such that for any $t>1$, 
$$\exp(-c_1\, t^{1-\frac 2d}) \le \mathbb P(|R_\infty^1\cap \dots \cap R_\infty^i \cap \mathcal R_c^1 \cap \dots\cap \mathcal R_c^j|>t) \le  \exp(-c_2\, t^{1-\frac 2d}). $$ 
\item[(ii)] If $d\ge 7$, then for any $k\ge 1$, there exist positive constants $c_1,c_2$, such that for any $t>1$, 
$$\exp(-c_1\, t^{1-\frac 2{d-2}} ) \le \mathbb P(|R_\infty^1\cap \mathcal R_c^1 \cap \dots \cap \mathcal R_c^k|>t) \le  \exp (-c_2\, t^{1-\frac 2{d-2}}). $$ 
\end{enumerate}
\end{theorem}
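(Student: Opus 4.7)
The plan is to combine the multiscale/dyadic techniques used in the proofs of Theorems~\ref{theo.SRW} and~\ref{thm.BRW} with the heuristic principle for mixed-type intersections sketched in Section~\ref{subsec.twotypes}. The two key inputs are exponential moment bounds: for the simple random walk, the classical Kac-type estimate
\begin{equation*}
\mathbb E_x\bigl[\exp\bigl(\kappa \sum_{y\in R_\infty} \varphi(y)\bigr)\bigr]\le 2 \quad \text{whenever }\|G*\varphi\|_\infty \le 1,
\end{equation*}
where $G$ is the SRW Green function (of order $g_{d-2}$); and for the critical branching random walk, the analogous bound of~\cite{ASS23},
\begin{equation*}
\mathbb E_x\bigl[\exp\bigl(\kappa \sum_{y\in \mathcal R_c} \varphi(y)\bigr)\bigr]\le 2 \quad \text{whenever }\|g_{d-4}*\varphi\|_\infty \le 1.
\end{equation*}
Applied to normalized indicators $\varphi = \lambda \mathbf 1_{B(y,r)}$, these translate into ball-covering tail estimates $\mathbb P(|R_\infty \cap B(y,r)|>s) \le 2\,e^{-c s/r^2}$ and $\mathbb P(|\mathcal R_c \cap B(y,r)|>s) \le 2\,e^{-c s/r^4}$, valid in the non-trivial range $s \le C r^d$.

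For the lower bounds I would build explicit scenarios. In Part~$(i)$, I set $r = t^{1/d}$ and force each of the $i$ SRWs to spend a time of order $t$ in $B(0,r)$ (cost $\exp(-c t^{1-2/d})$ per walk by the standard small-ball estimate) and each of the $j$ BRWs to cover a fraction of order one of $B(0,r)$ by a wave construction as in~\cite{AS24+} (cost $\exp(-c t^{1-4/d})$, which is subdominant). In Part~$(ii)$ I set instead $r = t^{1/(d-2)}$ and force the single SRW to visit $t$ prescribed points inside $B(0,r)$ (cost $\exp(-c t/r^2) = \exp(-c t^{1-2/(d-2)})$), while each BRW again covers a density of order one of $B(0,r)$ at the matching cost $\exp(-c r^{d-4}) = \exp(-c t^{(d-4)/(d-2)})$. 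Independence and concentration then ensure $|R_\infty^1 \cap\dots\cap \mathcal R_c^j| \ge c t$ with positive conditional probability.

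For the upper bounds I would follow the multiscale induction of~\cite{AS23}, extended to two different capacity kernels in the spirit of Remark~\ref{rem.alphabeta}. Combining the ball-covering estimates by independence, the probability that a given ball $B(y,r)$ is loaded by each of the $i+j$ random sets (with loads $s_S$ for each SRW and $s_B$ for each BRW) is at most $\exp(-c(i\, s_S/r^2 + j\, s_B/r^4))$. A localization step then reduces the event $\{|\mathcal I|>t\}$ to the existence of such a loaded ball at the correct scale. In Part~$(i)$ the optimum is $s_S \sim s_B \sim r^d$ with $r=t^{1/d}$, giving the dominant SRW contribution $c\, i\, t^{1-2/d}$; in Part~$(ii)$ the optimum is $s_S \sim t$ and $s_B\sim r^d$ with $r=t^{1/(d-2)}$, giving $c\,(1+j)\, t^{1-2/(d-2)}$.

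The main obstacle is the upper bound in Part~$(ii)$: the optimal scale $r = t^{1/(d-2)}$ is strictly larger than the naive minimal scale $t^{1/d}$ at which $t$ points could a priori fit, so the multiscale argument cannot simply localize the intersection at the smallest feasible scale -- doing so would produce the wrong exponent $1-2/d$. One must instead telescope over scales up to $t^{1/(d-2)}$, using that below this scale the BRW density-one requirement becomes prohibitively expensive while above it the SRW confinement cost dominates. This calls for an asymmetric convexity argument in the spirit of Remark~\ref{rem.alphabeta}, combining the two different kernels $g_{d-2}$ (for SRW) and $g_{d-4}$ (for BRW); the dimensional restriction $d \ge 7$ is precisely what ensures that the two costs balance at a strict minimum and that the relevant Green-function convolutions remain summable across scales.
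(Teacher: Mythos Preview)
Your lower bounds are exactly as in the paper: Proposition~\ref{prop.lowerbound} for the BRWs and its SRW analogue~\eqref{lower.SRW}, applied iteratively at the scales you indicate.

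For the upper bounds, however, you are working much harder than necessary. The paper exploits a trivial monotonicity that you overlook: in Part~(i) the hypothesis is $i\ge 2$, so $|R_\infty^1\cap\dots\cap R_\infty^i\cap \mathcal R_c^1\cap\dots\cap \mathcal R_c^j|\le |R_\infty^1\cap R_\infty^2|$, and the upper bound is therefore an immediate consequence of Theorem~\ref{theo.SRW}(i) --- no new multiscale argument is needed, and the BRWs play no role at all. Likewise in Part~(ii), since intersecting with more sets only shrinks the intersection, it suffices to treat $k=1$, i.e.\ one SRW and one BRW; this is precisely the two-type situation of~\eqref{intersection.twodifferent} with $\alpha=d-2$ (BRW) and $\beta=d-2$ for the SRW kernel, handled verbatim by the argument in Remark~\ref{rem.alphabeta}, which you do correctly identify as the relevant tool. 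Your proposed joint localization over all $i+j$ processes would presumably also work, but it is both more laborious and, as you yourself note in the ``main obstacle'' paragraph, genuinely delicate to make rigorous at the level of a single loaded ball; the paper's decomposition instead fixes \emph{one} process (the BRW), slices its occupation field into density level sets $\Lambda_i$, bounds $\sup_z g_{d-2}(z,\cdot)$ on the good event $\mathcal E$, and only then invokes the exponential moment bound for the \emph{other} process.
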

The upper bound in Part (i) is a direct  consequence of Theorem~\ref{theo.SRW}, while for the upper bound in (ii), it suffices to treat the case of one SRW and one BRW ranges, for which the proof is entirely similar to the one for~\eqref{intersection.twodifferent}. Likewise, the lower bounds can be obtained similarly as those in Theorems~\ref{theo.SRW} and~\ref{thm.BRW}.  

\vspace{0.1cm}
The only missing case, which we cannot treat with our present techniques, is the intersection of one SRW and two (or more) critical BRWs in dimension $6$. Note that the dimension five falls in another regime, in particular in this case $|R_\infty^1 \cap \mathcal R_\infty^1 \cap \mathcal R_\infty^2|$ is almost surely infinite. 

\vspace{0.1cm}
On the other hand, our techniques provide additional information on the 
occupation density of the walks in the region where the intersection takes place. To illustrate the type of results that one can show, let us focus here only on the case where one SRW intersects one BRW. For $t>0$, $r\ge 1$ and $\rho>0$, let 
\begin{equation}\label{Rtrrho}
\mathcal R_t(r,\rho) = \big\{z\in \mathcal R_\infty : |B(z,r)\cap \mathcal R_\infty|>\rho r^d\big\} \cap B\big(0,\exp(t^{\frac{d-4}{d-2}})\big). 
\end{equation}
\begin{theorem}\label{theo.scenario}
Assume $d\ge 7$. There exists $\beta>1$, such that the two following claims hold. 
\begin{itemize}
    \item[(i)] There exist  positive constants $\rho$ and $a<b$, such that 
    $$ \lim_{t\to \infty} 
    \mathbb P\Big(at^{\frac{d}{d-2}} \le |\mathcal R_{\beta t}(\beta t^{\frac 1{d-2}},\rho)|\le bt^{\frac{d}{d-2}} \ \big|\ |R_\infty \cap \mathcal R_\infty|>t\Big) =1. $$
    \item[(ii)] For any $\varepsilon>0$, there exists $\rho>0$, such that 
    $$\lim_{t\to \infty}
 \mathbb P\Big(|R_\infty \cap \mathcal R_{\beta t}(\beta t^{\frac 1{d-2}},\rho)|\ge (1-\varepsilon)t  \ \big|\ |R_\infty \cap \mathcal R_\infty|>t\Big) =1. $$    
\end{itemize}
\end{theorem}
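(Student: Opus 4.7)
The plan is to compare the cost of any scenario realizing $|R_\infty\cap \mathcal R_\infty|>t$ with the sharp lower bound $\exp(-c_1 t^{(d-4)/(d-2)})$ from Theorem~\ref{theo.SRWBRW}(ii), and rule out deviations from the stated scenario by showing their conditional probability vanishes. Fix $r:=\beta t^{1/(d-2)}$ with $\beta>1$ to be chosen large. The guiding heuristic is that BRW density $\rho$ in a ball of radius $r$ has cost $\exp(-c\rho\, r^{d-4})=\exp(-c\rho\,t^{(d-4)/(d-2)})$ (BRW analog of Corollary~\ref{cor.1}, extracted from Proposition~\ref{prop.localtimesBRW}), while SRW covering a fraction $\sigma$ of a ball of radius $r$ has cost $\exp(-c\sigma\, r^{d-2})=\exp(-c\sigma\, t)$ (SRW version of Corollary~\ref{cor.1}, with $\alpha=d-2$). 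Balancing both costs against the target $\exp(-c\,t^{(d-4)/(d-2)})$ forces $\rho=\Theta(1)$ and $\sigma\asymp t^{-2/(d-2)}$, and then a single dense ball yields an intersection of size $\rho\sigma\, r^d\asymp t$.

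For Part~(ii), first localize to $\Lambda:=B(0,\exp((\beta t)^{(d-4)/(d-2)}))$: using $\mathbb P(x\in R_\infty)\lesssim |x|^{-(d-2)}$ and $\mathbb P(x\in \mathcal R_\infty)\lesssim |x|^{-(d-4)}$, Markov's inequality shows that $|R_\infty\cap \mathcal R_\infty\setminus \Lambda|$ is conditionally negligible. Inside $\Lambda$, apply the SRW version of Theorem~\ref{thm.expmoment} to $\varphi(x):=M^{-1}\mathbf 1\{x\in \mathcal R_\infty\setminus \mathcal R_{\beta t}(r,\rho)\}$, where $M$ is the smallest constant ensuring $\|g_{d-4}*\varphi\|_\infty\le 1$. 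For any $x$ in the sparse part, $B(x,r)$ contains at most $\rho r^d$ points of $\mathcal R_\infty$, giving $M\lesssim \rho r^d$. Chebyshev then yields
\begin{equation*}
\mathbb P\bigl(|R_\infty\cap(\mathcal R_\infty\setminus \mathcal R_{\beta t}(r,\rho))|>\varepsilon t \mid \mathcal R_\infty\bigr)\le 2\exp\bigl(-\kappa\varepsilon\beta^{-d}\rho^{-1}\,t^{(d-4)/(d-2)}\bigr),
\end{equation*}
and choosing $\rho=\rho(\varepsilon,\beta)$ small enough makes the conditional probability vanish.

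For Part~(i): For the upper bound, observe that $|\mathcal R_{\beta t}(r,\rho)|>b\, t^{d/(d-2)}$ forces at least $N\asymp b\beta^{-d}$ disjoint balls of radius $r$ in $\Lambda$ to be $\rho$-dense. Proposition~\ref{prop.localtimesBRW} (together with a joint exponential moment derived as in Theorem~\ref{thm.expmoment}) bounds this probability by $\exp(-c N\rho\,t^{(d-4)/(d-2)})$, which for $b$ large is $o(\exp(-c_1 t^{(d-4)/(d-2)}))$, giving the upper bound conditionally. For the lower bound, the SRW version of Corollary~\ref{cor.1} gives $\mathbb P(|R_\infty\cap B(z,r)|>\sigma r^d)\le \exp(-c\sigma\, t)$; hence the simultaneous bound $|R_\infty\cap B(z,r)|\le C t^{-2/(d-2)}\,r^d$ over all $z\in \Lambda$ holds with conditional probability tending to $1$. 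Covering $\mathcal R_{\beta t}(r,\rho)$ by such balls yields $|R_\infty\cap \mathcal R_{\beta t}(r,\rho)|\le C t^{-2/(d-2)}\,|\mathcal R_{\beta t}(r,\rho)|$, and combining with Part~(ii)'s lower bound $(1-\varepsilon)t$ for this intersection gives $|\mathcal R_{\beta t}(r,\rho)|\ge a\, t^{d/(d-2)}$.

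The main obstacle is the union bound over the roughly $\exp(C t^{(d-4)/(d-2)})$ balls tiling $\Lambda$, which can cancel the gain from the exponential estimates if handled naively. The remedy is to take $\beta$ (and hence the thresholds $\rho$ and $\sigma$) large enough that the cost in the exponent dominates the combinatorial factor, and to exploit the localization of $R_\infty$ and $\mathcal R_\infty$ to a polynomially-sized region to reduce the effective union bound. Making this trade-off work rigorously, especially in the SRW-density control for the lower bound in Part~(i) and in the combinatorial accounting for Part~(i)'s upper bound, will be the most delicate step.
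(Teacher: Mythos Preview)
Your outline has the right overall architecture (compare scenarios to the lower bound $\exp(-c_1 t^{(d-4)/(d-2)})$ from Theorem~\ref{theo.SRWBRW}(ii)), but Part~(ii) contains a genuine gap and Part~(i) lower bound is unnecessarily delicate.

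\textbf{Part~(ii).} First, the kernel is wrong: for the simple random walk the exponential moment bound (Kac's formula, cf.\ \cite[Lemma~2.1]{AS23}) reads $\mathbb P(|R_\infty\cap\Lambda|>s)\le\exp\bigl(-cs/\sup_z g_{d-2}(z,\Lambda)\bigr)$, with $g_{d-2}$, not $g_{d-4}$; Theorem~\ref{thm.expmoment} with $2\alpha-d$ is specific to the tree-graph structure of percolation/BRW. More seriously, your claimed bound $M\lesssim\rho r^d$ gives exponent $\varepsilon t/M\asymp t^{-2/(d-2)}\to 0$, not the $t^{(d-4)/(d-2)}$ you write. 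With the correct kernel, the near contribution $g_{d-2}(z,B(z,r)\cap\Lambda)$ is indeed of order $\rho^{2/d}r^2\asymp t^{2/(d-2)}$ by rearrangement, which is fine, but the far contribution $g_{d-2}(z,\Lambda\setminus B(z,r))$ is only controlled by Lemma~\ref{lem.densite} as $C\rho^{(d-2)/d}|\Lambda|^{2/d}$, and $|\Lambda|$ (the sparse part of $\mathcal R_\infty$ in a ball of radius $\exp(t^{(d-4)/(d-2)})$) has no a~priori bound of the required order. This is exactly why the paper does \emph{not} work at a single scale: it decomposes $\mathcal R_\infty$ into the multiscale pieces $(\Lambda_i)_{i\ge 0}$ of~\eqref{setsLambdai}, controls $|\Lambda_i|$ for each $i$ via the analogue of Corollary~\ref{cor.COR} for $\mathcal R_\infty$, and only then bounds $\sup_z g_{d-2}(z,\cdot)$ on the union of the low-density pieces.

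\textbf{Part~(i), lower bound.} Your route through a uniform SRW-density bound over all balls in $\Lambda$ faces exactly the union-bound obstacle you flag, and the ``remedy'' of tuning $\beta$ requires a careful interplay between constants that you do not carry out. The paper bypasses this entirely with a one-line deterministic observation: if $\mathcal R_{\beta t}(r,\rho)$ is nonempty, pick $z$ in it; then every $w\in B(z,r)\cap\mathcal R_\infty$ satisfies $|B(w,2r)\cap\mathcal R_\infty|\ge |B(z,r)\cap\mathcal R_\infty|>\rho r^d=(\rho/2^d)(2r)^d$, so $|\mathcal R_{2\beta t}(2r,\rho/2^d)|\ge\rho r^d\asymp t^{d/(d-2)}$. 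Since Part~(ii) guarantees nonemptiness with high conditional probability, this immediately gives the lower bound (after adjusting $\beta$ and $\rho$).

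Your Part~(i) upper bound is essentially the paper's argument, though the paper states it directly via the bound $\mathbb P(|\mathcal R_t(r,\rho)|\ge L)\le C\exp(-c\rho^{4/d}L^{1-4/d})$ (proved as for Corollary~\ref{cor.COR}), rather than through disjoint balls.
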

In short, the result says that as $t\to \infty$, conditionally on having an intersection larger than $t$, a fraction arbitrarily close to one of the intersection takes place in a region of volume of order $t^{d/(d-2)}$, where the BRW realizes an occupation density of order one, at  scale $t^{1/(d-2)}$. This supports the idea that the trace of the BRW should look like a kind of Swiss cheese -- a picture which emerges for instance as two SRW intersect more than usual  (see~\cite{BBH}) -- filling a positive fraction of a ball of radius $t^{1/(d-2)}$, where on the other hand the SRW only realizes an occupation density of order   $t^{-2/(d-2)}$.

\subsection*{Plan of the paper} In Section~\ref{sec.BK} we recall the definition of the BK inequality, and prove some basic results involving the functions $g_\alpha$. In Section~\ref{sec.cap}, we state some elementary facts about $\beta$-capacities. The proofs are given partly in this section, partly in the appendix. Then in Sections~\ref{sec.proofmoment} and~\ref{sec.proof.main} we prove our two main results concerning the intersection of two independent random clusters, namely  Theorems~\ref{thm.expmoment} and~\ref{LD.intersection.finite} respectively.
In Section~\ref{sec.IIC} we prove Theorem~\ref{thm.IIC} about the extension to the incipient infinite cluster. Section~\ref{sec.manyclusters} is dedicated to the proof of Theorem~\ref{theo.generalcluster}  concerning the extension to an arbitrary finite number of clusters. Finally in  Section~\ref{sec.BRW} we prove Theorems~\ref{theo.SRW},~\ref{thm.BRW} and~\ref{theo.SRWBRW} about Simple random walks and Branching random walks, Section~\ref{sec.lowdim} deals with the proof of Theorem~\ref{BRW.lowdim} about intersections in low dimension, and in Section~\ref{sec.scenario} we prove the remaining Theorem~\ref{theo.scenario} describing the typical scenario under the intersection event.

\subsection*{Acknowledgments.}
We thank Perla Sousi for enlightening discussions at an early stage of this project, and for pointing at reference~\cite{Khosh}. We also thank Alexis Prevost for mentioning~\cite{CD23,CD24,Werner21} to us. The authors acknowledge support from the grant ANR-22-CE40-0012 (project Local).

\section{Preliminaries}
\subsection{The BK inequality} 
\label{sec.BK}
Here we recall a definition of the BK inequality, which is a standard tool in independent percolation, but which can be defined more generally for a random graph. First an event $E$ is said to be increasing if whenever $\{\mathcal C = A\}\subseteq E$, with $A$ some fixed graph (with vertex set $V$), then it also holds $\{\mathcal C =B\}\subseteq E$, for each $B$ containing $A$ as a subgraph. We say that a finite graph $A$ witnesses $E$, if $\{A\subseteq \mathcal C\}\subseteq E$. Given two increasing events $E$ and $F$, we say that they hold disjointly, and denote the corresponding event as $E\circ F$, if there exists two finite subgraphs of $\mathcal C$ with disjoint edge sets, that witness these two events. Note that this is also an increasing event, so that one can define inductively  $E_1\circ E_2\circ\dots \circ E_n$, for increasing events $E_1,\dots,E_n$. 
\begin{definition}[BK inequality] \label{def.BK} The random graph $\mathcal G$ is said to satisfy the BK inequality, if for any increasing events $E$ and $F$, one has 
$$\mathbb P(E\circ F) \le \mathbb P(E ) \cdot \mathbb P(F). $$ 
\end{definition}

\subsection{Some Green's function computation}
We prove here some basic results involving Green's functions $g_\alpha$ that we shall need later. 
\begin{lemma}\label{lem.npoint}
Assume $\alpha \in (d/2,d)$. There exists a constant $C>0$, such that for any $L\ge 2$, any $x_1,\dots,x_L,z\in V$, one has 
$$\sum_{y\in V} g_\alpha(z,y) \prod_{\ell =1}^L g_\alpha(y,x_\ell) \le C^L\cdot \sum_{\sigma \in \mathfrak S_L}
g_\alpha(x_{\sigma(1)},z) \prod_{\ell = 1}^{L-1} g_{2\alpha-d}(x_{\sigma(\ell)},x_{\sigma(\ell +1)}). $$ 
\end{lemma}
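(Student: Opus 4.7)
The proof rests on three elementary ingredients. First, the basic convolution estimate
\[
\sum_{y \in V} g_\alpha(a, y)\, g_\alpha(y, b) \le C\, g_{2\alpha - d}(a, b), \qquad a, b \in V,
\]
which follows from the polynomial volume growth \eqref{poly} by a standard dyadic decomposition of $V$ at scale $d_V(a, b)/2$ (the hypothesis $\alpha \in (d/2, d)$ is exactly what is needed for both the near-$a$ and the far-$a$ contributions to converge). Second, a \emph{splitting} inequality valid for any $\beta \in (0, d)$: by the triangle inequality at least one of $d_V(y, a)$ and $d_V(y, b)$ is at least $d_V(a, b)/2$, whence
\[
g_\beta(y, a)\, g_\beta(y, b) \le C\, g_\beta(a, b)\, [g_\beta(y, a) + g_\beta(y, b)].
\]
Third, the trivial comparison $g_\alpha \le C\, g_{2\alpha - d}$, a consequence of $\alpha > 2\alpha - d > 0$.

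I would proceed by induction on $L \ge 2$. For the base case $L = 2$, I apply the splitting at $\beta = \alpha$ to $g_\alpha(y, z)\, g_\alpha(y, x_1)$, extracting the factor $g_\alpha(z, x_1)$, and then use the convolution estimate twice on the remaining two-point sums to obtain
\[
\sum_y g_\alpha(z, y)\, g_\alpha(y, x_1)\, g_\alpha(y, x_2) \le C\, g_\alpha(z, x_1)\, \big[g_{2\alpha - d}(x_1, x_2) + g_{2\alpha - d}(z, x_2)\big].
\]
The first summand is already one of the two permissible chain terms. To absorb the ``cross'' term, I distinguish two cases: if $d_V(z, x_2) \ge d_V(x_1, x_2)/2$ then $g_{2\alpha - d}(z, x_2) \le C\, g_{2\alpha - d}(x_1, x_2)$ absorbs it directly into the first chain; otherwise $d_V(z, x_1) > d_V(x_1, x_2)/2$ by the triangle inequality, and a short algebraic check (exploiting $d_V(z, x_2) < d_V(z, x_1)$ together with the comparability of $d_V(z, x_1)$ and $d_V(x_1, x_2)$) yields $g_\alpha(z, x_1)\, g_{2\alpha - d}(z, x_2) \le C\, g_\alpha(z, x_2)\, g_{2\alpha - d}(x_1, x_2)$, absorbing it into the other chain.

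For the inductive step, I apply the same splitting to $g_\alpha(y, z)\, g_\alpha(y, x_1)$ to reduce the left-hand side to a combination of two sums of the same shape as the LHS but of order $L - 1$, rooted respectively at $z$ and at $x_1$, multiplied overall by $C\, g_\alpha(z, x_1)$. Applying the induction hypothesis to both (together with the comparison $g_\alpha \le C\, g_{2\alpha - d}$ on the leading factor of the second) expresses the result as a combination of chain-like expressions: those starting with the edge between $z$ and $x_1$ are permissible and retained, while the remaining terms carry two $g_\alpha(z, \cdot)$ factors at the root. These latter are collapsed by one more application of the splitting inequality at $\beta = \alpha$, at the cost of introducing a $g_{2\alpha - d}$ branching at an internal vertex of the chain; each such branching is in turn rerouted into a sum of two valid chains by iterating the splitting inequality at $\beta = 2\alpha - d$. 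I expect the main obstacle to be the combinatorial bookkeeping of this re-routing: one must verify that after at most $L - 1$ iterations all resulting terms are valid chains indexed by permutations $\sigma \in \mathfrak S_L$, and that the accumulated multiplicative constant remains of order $C^L$ uniformly in $L$.
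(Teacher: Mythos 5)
Your three ingredients (convolution estimate, splitting inequality, $g_\alpha\le g_{2\alpha-d}$) are correct, and your base case $L=2$ is a sound argument. The genuine gap is exactly where you flagged it: the rerouting in the inductive step does \emph{not} keep the constant of order $C^L$. Let me make this precise. After splitting $g_\alpha(y,z)g_\alpha(y,x_1)$ and applying the induction hypothesis, the term $g_\alpha(z,x_1)\,\mathrm{RHS}^{(z)}_{L-1}$ produces expressions $g_\alpha(z,x_1)\,g_\alpha(z,x_{\tau(1)})\prod_j g_{2\alpha-d}(x_{\tau(j)},x_{\tau(j+1)})$ with two $g_\alpha(z,\cdot)$ factors. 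Your iterated splitting pushes the label $1$ down the chain one position at a time, and each push costs a fixed factor $C_0=2^\beta>1$. Hence the resulting chain with $x_1$ inserted at position $k$ carries a coefficient $C_0^{k-1}$, and the worst case is $C_0^{L-2}$. Combining with the $C_0$ from the first split and the $C^{L-1}$ from the induction hypothesis, one needs $C_0^{L-1}C^{L-1}\le C^L$, i.e.\ $C_0^{L-1}\le C$, for the induction to close — impossible for a fixed $C$ once $C_0>1$. So as written your argument gives a constant of order $C^{L^2}$, which is not enough for the applications in the paper (the bound has to be summable against $\tfrac{1}{n!}$ via Lemma~\ref{lem.cardinalTn}).

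The repair is small but essential, and it is precisely the choice that the paper's proof bakes in: before applying any splitting, re-index so that $x_1$ is the point \emph{closest} to $z$, i.e.\ $d_V(z,x_1)=\min_{\ell}d_V(z,x_\ell)$. Then the rerouting disappears entirely. Indeed, with this choice, any problematic root factor $g_\alpha(z,x_1)\,g_\alpha(z,x_{\tau(1)})$ that comes out of $\mathrm{RHS}^{(z)}_{L-1}$ can be treated by a \emph{single} triangle-inequality bound: since $d_V(z,x_1)\le d_V(z,x_{\tau(1)})$, one has $d_V(x_1,x_{\tau(1)})\le 2\,d_V(z,x_{\tau(1)})$, hence $g_\alpha(z,x_{\tau(1)})\le 2^\alpha g_\alpha(x_1,x_{\tau(1)})\le 2^\alpha g_{2\alpha-d}(x_1,x_{\tau(1)})$, and you obtain directly the admissible chain $z\to x_1\to x_{\tau(1)}\to x_{\tau(2)}\to\cdots$. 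The accumulated constant per induction step is then a fixed number (of order $C_0\cdot 2^\alpha$), so the conclusion holds with a genuine $C^L$, and moreover the output is restricted to permutations $\sigma$ with $\sigma(1)=1$, which of course embeds into $\mathfrak S_L$. By way of comparison, the paper's own route partitions the $y$-sum at scale $r=\min_j d_V(x_j,z)$ rather than the product of the two root factors: for $y\notin B(z,r/2)$ it pulls out $g_\alpha(z,x_i)$ (with $x_i$ the nearest point) and invokes the induction hypothesis rooted at $x_i$; for $y\in B(z,r/2)$ it bounds every $g_\alpha(y,x_\ell)$ by $2^\alpha g_\alpha(z,x_\ell)$ and compares the resulting product to a single chain ordered by distance from $z$. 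Both the paper's partition of the $y$-sum and your splitting of the product work, but both rely crucially on singling out the nearest point $x_i$, which is the one ingredient missing from your write-up.
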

\begin{proof}
We prove the result by induction on $L$. For $L=2$, the result is well known, and follows from elementary computation. Assume now that it has been proved for some $L$, and let us prove it for $L+1$. Let $x_1,\dots,x_{L+1},z\in V$ be given, and define 
$$r = \min_{j=1,\dots,n} d_V(x_j,z).$$
We first bound the sum over $y\notin B(z,r/2)$. Let $i$ be such that $d_V(x_i,z)=r$. We shall use the induction hypothesis, together with  the fact that for  $y\notin B(z,r/2)$, one has $g_\alpha(z,y) \le 2^\alpha\,  g_\alpha(z,x_i)$. This yields, with $\mathfrak S_L^i$, the set of bijections from $\{2,\dots,L+1\}$ to $\{1,\dots,L+1\}\setminus \{i\}$, 
\begin{align}\label{sumsigma1}
\nonumber \sum_{y\notin B(z,r/2)} g_\alpha(z,y) \prod_{\ell =1}^L g_\alpha(y,x_\ell) & \le 2^\alpha \, g_\alpha(z,x_i) \sum_{y\in V} \prod_{\ell = 1}^L g_\alpha(y,x_\ell) \\
\nonumber & \le 2^\alpha C^L\cdot  g_\alpha(z,x_i) \sum_{\sigma\in \mathfrak S_L^i} 
g_\alpha(x_{\sigma(2)},x_i) \prod_{\ell = 2}^{L} g_{2\alpha-d}(x_{\sigma(\ell)},x_{\sigma(\ell +1)}) \\
& \le C^{L+1}  \sum_{\sigma\in \mathfrak S_L^i} g_\alpha(x_{\sigma_i(1)},z) \prod_{\ell = 1}^{L} g_{2\alpha-d}(x_{\sigma_i(\ell)},x_{\sigma_i(\ell +1)}),
\end{align}
denoting for any $\sigma \in \mathfrak S_L^i$, by $\sigma_i$ the permutation of $\mathfrak S_{L+1}$ sending $1$ to $i$, and coinciding with $\sigma$ on $\{2,\dots,L+1\}$, and choosing a constant $C>2^\alpha$.

Now it remains to bound the sum over $y\in B(z,r/2)$. Let $\sigma \in \mathfrak S_{L+1}$, be such that  
$$d_V(z,x_{\sigma(1)}) \le d_V(z,x_{\sigma(2)})\le  \dots \le  d_V(z,x_{\sigma(L+1)}). $$
It is immediate to see (e.g. by induction on $L$) that one has 
\begin{align}\label{sumsigma2}
\nonumber \sum_{y\in B(z,r/2)}  g_\alpha(z,y) \prod_{\ell =1}^L g_\alpha(y,x_\ell)
& \le C^{L+1}\cdot g_{2\alpha -d}(x_{\sigma(1)},z) \prod_{\ell = 1}^L g_\alpha(x_{\sigma(\ell)},x_{\sigma(\ell+1)}) \\
&\le C^{L+1}\cdot g_\alpha (x_{\sigma(1)},z) \prod_{\ell = 1}^L g_{2\alpha-d}(x_{\sigma(\ell)},x_{\sigma(\ell+1)}). 
\end{align}
Then combining~\eqref{sumsigma1} and~\eqref{sumsigma2} concludes the proof of the lemma.  
\end{proof}

For the next two results we specify the setting to the case where the ambient space is $\mathbb Z^d$, since we shall only use it in this case.

\begin{lemma}\label{lem.Green2}
Assume $d\ge 5$. There exists a constant $C>0$, such that for any $z,x_1,x_2\in \mathbb Z^d$, 
$$\sum_{y\in \mathbb Z^d } g_{d-2}(z,y)\,  g_{d-2}(x_1,y)\, g_{d-4}(x_2,y) \le C\cdot  g_{d-4}(x_1,x_2)\cdot \big(g_{d-4}(x_1,z) + g_{d-4}(x_2,z)\big). $$
\end{lemma}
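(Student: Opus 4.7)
\emph{Proof plan.} I would split the sum according to whether $y$ lies in the ball $B(x_2, R)$ or outside it, where $R := \tfrac{1}{2}\min(\|x_1-x_2\|,\|x_2-z\|)$, and handle the two regions by complementary estimates.

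For $y \in B(x_2, R)$, the reverse triangle inequality yields $\|y-x_1\| \ge \|x_1-x_2\|/2$ and $\|y-z\| \ge \|x_2-z\|/2$, whence $g_{d-2}(x_1, y) \le C\, g_{d-2}(x_1, x_2)$ and $g_{d-2}(z, y) \le C\, g_{d-2}(x_2, z)$. Combined with the elementary estimate $\sum_{\|y-x_2\|\le R} g_{d-4}(x_2, y) \le C\,R^4$, which holds for $d\ge 5$ since $\sum_{r\le R} r^{d-1}/r^{d-4} \asymp R^4$, this gives
\begin{equation*}
\sum_{\|y-x_2\|\le R} g_{d-2}(z,y)\, g_{d-2}(x_1,y)\, g_{d-4}(x_2,y) \le C\, g_{d-2}(x_1, x_2)\, g_{d-2}(x_2, z)\, R^4 \le C\, g_{d-4}(x_1, x_2)\, g_{d-4}(x_2, z),
\end{equation*}
where the last step reduces to the algebraic fact $\min(r_{12}, r_{2z})^4 \le r_{12}^2\, r_{2z}^2$ (writing $r_{ij} = \|x_i - x_j\|$).

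For $\|y-x_2\| > R$ one has $g_{d-4}(x_2, y) \le C\, R^{-(d-4)} \le C \max\{g_{d-4}(x_1, x_2), g_{d-4}(x_2, z)\}$; pulling this constant factor outside the sum and using the standard convolution bound $\sum_y g_{d-2}(z,y)\, g_{d-2}(x_1,y) \le C\, g_{d-4}(x_1, z)$ (valid in $\mathbb Z^d$ for $d\ge 5$, and already invoked as the base case in the proof of Lemma~\ref{lem.npoint}) yields
\begin{equation*}
\sum_{\|y-x_2\|>R} g_{d-2}(z,y)\, g_{d-2}(x_1,y)\, g_{d-4}(x_2,y) \le C\, g_{d-4}(x_1, z)\cdot \max\{g_{d-4}(x_1, x_2), g_{d-4}(x_2, z)\}.
\end{equation*}

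Combining the two bounds, it remains to observe the elementary inequality $g_{d-4}(x_1, z)\, g_{d-4}(x_2, z) \le C\, g_{d-4}(x_1, x_2)\,(g_{d-4}(x_1, z) + g_{d-4}(x_2, z))$, which follows from the triangle inequality $\max(\|x_1-z\|, \|x_2-z\|) \ge \|x_1-x_2\|/2$ (so that the smaller of the two factors $g_{d-4}(x_i, z)$, corresponding to the larger of the two distances, is dominated by $C\, g_{d-4}(x_1, x_2)$). No serious obstacle is expected: the argument rests entirely on repeated applications of the triangle inequality together with the one-variable convolution estimate already used in the proof of Lemma~\ref{lem.npoint}.
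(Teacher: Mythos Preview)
Your proof is correct and follows essentially the same approach as the paper: split the sum by a ball centred at $x_2$ of radius comparable to the smaller of $\|x_1-x_2\|$ and $\|x_2-z\|$, use direct pointwise bounds on the two $g_{d-2}$ factors inside, and use the convolution estimate $g_{d-2}*g_{d-2}\le C\,g_{d-4}$ outside. The paper obtains the same splitting by first assuming $x_2=0$ and then distinguishing the cases $\|x_1\|\le\|z\|$ and $\|z\|\le\|x_1\|$; your use of $R=\tfrac12\min(\|x_1-x_2\|,\|x_2-z\|)$ merges these into a single argument at the price of the short triangle-inequality cleanup you give at the end.
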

\begin{proof}
Using translation invariance, one may always assume that $x_2= 0$. Now assume first that $\|x_1\| \le \|z\|$. Then note that 
\begin{align*}
&  \sum_{y\in B(0,\|x_1\|/2)^c} g_{d-2}(z,y)  \,  g_{d-2}(x_1,y)\, g_{d-4}(0,y) \\ 
& \le C g_{d-4}(0,x_1) \sum_{y\in \mathbb Z^d}g_{d-2}(z,y)\,  g_{d-2}(x_1,y) \le Cg_{d-4}(0,x_1)g_{d-4}(z,x_1). 
\end{align*}
On the other hand,  
\begin{align*}
& \sum_{y\in B(0,\|x_1\|/2)}g_{d-2}(z,y)  \,  g_{d-2}(x_1,y)\, g_{d-4}(0,y) \\ 
& \le Cg_{d-2}(z,0) g_{d-2}(x_1,0) \cdot \|x_1\|^4 \le Cg_{d-4}(x_1,z)g_{d-4}(0,x_1), 
\end{align*}
which proves the result in the case $\|x_1\|\le \|z\|$. A similar argument may be used in the other case $\|z\|\le \|x_1\|$, concluding the proof of the lemma. 
\end{proof}
Our next result is a direct consequence of this lemma. To state it we need some additional notation. Define inductively $T_n$ a set of rooted trees with $n$ vertices labelled in $\{1,\dots,n\}$, $n\ge 1$, as follows. Let $T_1$ be the unique rooted tree with one vertex labelled $1$. Then, given any $\mathfrak t\in T_n$, we associate two trees in $T_{n+1}$ by first adding an edge between a new vertex with label $n+1 $ to the root of $\mathfrak t$, and either rerooting the tree at this new vertex, or keeping the same root as in $\mathfrak t$. In particular $T_n$ has $2^{n-1}$ elements. Then define the function $f_n$ on $T_n\times (\mathbb Z^d)^n$, by 
$$f_n(\mathfrak t,x_1,\dots,x_n) = \prod_{(i,j)\in \mathcal E(\mathfrak t)} g_{d-4}(x_i,x_j),$$
where $\mathcal E(\mathfrak t)$ denotes the edge set of $\mathfrak t$. 
\begin{proposition}\label{prop.Green}
Assume $d\ge 5$. There exists a constant $C>0$, such that for any $L\ge 1$, any $z,x_1,\dots,x_L\in \mathbb Z^d$, 
$$\sum_{y_1,\dots,y_L\in \mathbb Z^d} 
g_{d-2}(z,y_1)\cdot\big(\prod_{\ell = 1}^{L-1}g_{d-2}(y_\ell,y_{\ell +1})\big)\cdot\big(\prod_{\ell = 1}^L
g_{d-2}(x_\ell,y_\ell)\big) \le C^L \sum_{\mathfrak t\in T_{L+1}} f_{L+1}(\mathfrak t,x_L,\dots,x_1,z). $$ 
\end{proposition}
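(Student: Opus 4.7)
We argue by induction on $L$. Denote the left-hand side by $H_L(z; x_1,\dots,x_L)$. The base case $L=1$ reduces to the standard Green's function convolution estimate $\sum_y g_{d-2}(z,y)\, g_{d-2}(x_1,y) \le C'\, g_{d-4}(x_1,z)$, which matches the right-hand side since $T_2$ has exactly two elements, each with the single edge $\{1,2\}$, so that $\sum_{\mathfrak{t}\in T_2} f_2(\mathfrak{t}, x_1, z) = 2\, g_{d-4}(x_1,z)$.

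For the inductive step, we use the recursion
\begin{equation*}
H_{L+1}(z; x_1,\dots,x_{L+1}) = \sum_{y_1} g_{d-2}(z,y_1)\, g_{d-2}(x_1,y_1)\, H_L(y_1; x_2,\dots,x_{L+1}),
\end{equation*}
and apply the inductive hypothesis to bound the inner $H_L$ by $C^L \sum_{\mathfrak{s}\in T_{L+1}} f_{L+1}(\mathfrak{s}, x_{L+1},\dots,x_2,y_1)$. By construction of $T_n$, vertex $L+1$ (which is associated with $y_1$) is a leaf of $\mathfrak{s}$ joined to a unique parent $p(\mathfrak{s}) \in \{1,\dots,L\}$. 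Isolating the corresponding edge from $f_{L+1}$ yields a factor $g_{d-4}(y_1,\, x_{L+2-p(\mathfrak{s})})$, with the remainder $\tilde f(\mathfrak{s})$ being independent of $y_1$. Applying Lemma~\ref{lem.Green2} to the remaining sum $\sum_{y_1} g_{d-2}(z,y_1)\, g_{d-2}(x_1,y_1)\, g_{d-4}(y_1, x_{L+2-p(\mathfrak{s})})$ then produces two terms, in which $y_1$ is effectively ``collapsed'' onto either $x_1$ or $x_{L+2-p(\mathfrak{s})}$.

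The key point is the interpretation of these two terms at level $L+2$. Each of them, multiplied by $\tilde f(\mathfrak{s})$, equals $f_{L+2}(\mathfrak{t}, x_{L+1},\dots,x_1,z)$ for an underlying tree $\mathfrak{t}$ on $\{1,\dots,L+2\}$: the first case corresponds to $\mathfrak{t}$ with edge set $E(\mathfrak{s}) \cup \{\{L+1,L+2\}\}$ (the new vertex $L+2$, representing $z$, is attached as a leaf of $L+1$), and the second case to $E(\mathfrak{s}) \cup \{\{p(\mathfrak{s}),L+2\}\}$. Parameterizing $\mathfrak{t}\in T_n$ by the subset $S \subseteq \{2,\dots,n\}$ recording the rerooting steps, one checks that every underlying tree in $T_{L+2}$ is produced from exactly two $\mathfrak{s}\in T_{L+1}$ (differing only in whether vertex $L+1$ is the root, which does not affect $E(\mathfrak{s})$), and each such underlying tree has exactly two rootings in $T_{L+2}$. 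Since $f_{L+2}$ depends only on the underlying tree, this gives the identity $\sum_{\mathfrak{s}\in T_{L+1}} [\text{term }1 + \text{term }2] = \sum_{\mathfrak{t}\in T_{L+2}} f_{L+2}(\mathfrak{t}, x_{L+1},\dots,x_1,z)$, and the induction closes by choosing $C$ larger than the constants from Lemma~\ref{lem.Green2} and from the base case.

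The main obstacle is the combinatorial matching step: one must verify that the two branches produced by Lemma~\ref{lem.Green2} align exactly with the ``keep root versus reroot'' dichotomy defining $T_n$, so that each underlying tree in $T_{L+2}$ appears with the correct multiplicity. The numerical coincidence $2|T_{L+1}| = |T_{L+2}|$ is encouraging, and the natural interpretation of the two Lemma~\ref{lem.Green2} terms as the two possible attachment sites for the new vertex $L+2$ in $\mathfrak{s}$ (either at $L+1$ or at its parent $p(\mathfrak{s})$) makes the bijection transparent.
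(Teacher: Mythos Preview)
Your proof is correct and uses essentially the same approach as the paper: induction combined with Lemma~\ref{lem.Green2}. The one organizational difference is the direction of the induction. The paper sums over $y_L$ first (obtaining $g_{d-4}(y_{L-1},x_L)$), and then iterates Lemma~\ref{lem.Green2} through $y_{L-1},\dots,y_1$; at each step the $g_{d-4}$ factor connects the remaining chain to the \emph{root} of the current tree in $T_k$, so the two branches of Lemma~\ref{lem.Green2} match the ``reroot / keep root'' dichotomy in the definition of $T_{k+1}$ on the nose, and no multiplicity bookkeeping is needed. Your approach peels off $y_1$ instead and invokes the full inductive hypothesis on $H_L(y_1;\cdot)$, which forgets the root and therefore forces you into the (correct) multiplicity argument comparing edge sets in $T_{L+1}$ and $T_{L+2}$. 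Both routes are valid; the paper's ordering is slightly cleaner because the root is carried along explicitly.
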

\begin{proof}
We first observe that for any $y_{L-1}\in \mathbb Z^d$,
$$\sum_{y_L\in \mathbb Z^d}g_{d-2}(y_{L-1},y_L) g_{d-2}(x_L,y_L)\le Cg_{d-4}(y_{L-1},x_L),$$
which already proves the result when $L=1$. 
The general case $L\ge 1$ follows by induction, using Lemma~\ref{lem.Green2}. 
\end{proof}
\section{Basic facts about discrete Bessel-Riesz capacities}\label{sec.cap}
In this section we present basic facts about $\beta$-capacities, also called Bessel-Riesz capacities  in~\cite{Khosh}. We do not claim originality of the results, but since some of them might be difficult to find in the literature, we provide either precise references or in most cases full proofs, which we either include in this section when they are short, or defer to the Appendix for longer ones.

We fix in the whole section $\beta\in (0,d)$.  
We start with the following important result.  
\begin{lemma}\label{cap.translation}
For any finite sets $A,B \subset V$, one has the subadditivity property, 
$${\rm Cap}_\beta(A\cup B) \le {\rm Cap}_\beta(A) + {\rm Cap}_\beta(B). $$ 
Furthermore ${\rm Cap}_\beta$ is monotone for the inclusion of sets.
\end{lemma}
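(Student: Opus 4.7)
I will treat the two assertions separately. Monotonicity is the easier one: if $A\subseteq B$, then any probability measure supported on $A$ is also a probability measure on $B$, so the infimum defining $1/{\rm Cap}_\beta(A)$ is taken over a smaller class than that defining $1/{\rm Cap}_\beta(B)$. Hence the infimum for $A$ is at least as large as the one for $B$, and the result follows after taking reciprocals.

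For subadditivity, the plan is to extend the energy definition to unnormalized measures and then decompose an equilibrium measure on $A\cup B$. The key observation is that for any finite nonnegative measure $\mu$ supported on $A$ with $\mu(A)>0$,
$${\rm Cap}_\beta(A)\ \ge\ \frac{\mu(A)^2}{I_\beta(\mu)},\qquad \text{with } I_\beta(\mu):=\sum_{x,y}g_\beta(x,y)\mu(x)\mu(y),$$
which is immediate from the definition applied to the rescaled probability measure $\mu/\mu(A)$.

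Next, I would pick a probability measure $\nu$ realizing the infimum in the definition of ${\rm Cap}_\beta(A\cup B)$; such a minimizer exists by compactness since $A\cup B$ is finite. Decomposing $\nu=\nu_1+\nu_2$, where $\nu_1=\nu|_A$ and $\nu_2=\nu|_{B\setminus A}$ have respective total masses $m_1,m_2\ge 0$ summing to $1$, the observation above (applied to $\nu_1$ on $A$ and to $\nu_2$ on $B$, the latter being legal since $B\setminus A\subseteq B$) yields
$${\rm Cap}_\beta(A)+{\rm Cap}_\beta(B)\ \ge\ \frac{m_1^2}{I_\beta(\nu_1)}+\frac{m_2^2}{I_\beta(\nu_2)}\ \ge\ \frac{(m_1+m_2)^2}{I_\beta(\nu_1)+I_\beta(\nu_2)}\ =\ \frac{1}{I_\beta(\nu_1)+I_\beta(\nu_2)},$$
the middle inequality being Cauchy--Schwarz in Engel form. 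Since $g_\beta\ge 0$, expanding $I_\beta(\nu)=I_\beta(\nu_1+\nu_2)$ shows that $I_\beta(\nu)\ge I_\beta(\nu_1)+I_\beta(\nu_2)$, so we conclude ${\rm Cap}_\beta(A)+{\rm Cap}_\beta(B)\ge 1/I_\beta(\nu)={\rm Cap}_\beta(A\cup B)$.

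The only care needed is the degenerate case where some $m_i$ vanishes, which is easily dispatched by monotonicity (if $m_1=0$ then $\nu$ is supported on $B\setminus A\subseteq B$, giving ${\rm Cap}_\beta(A\cup B)\le {\rm Cap}_\beta(B)$). The main conceptual point, which I expect to be the only possible pitfall, is the choice of direction: one might be tempted to combine the equilibrium measures on $A$ and on $B$ into a test probability on $A\cup B$, but controlling the resulting cross-energy then requires $g_\beta$ to be positive semidefinite, which is not clear in our abstract setting. Decomposing the minimizer on $A\cup B$ instead of combining minimizers on $A$ and $B$ sidesteps this difficulty entirely; everything else is then routine.
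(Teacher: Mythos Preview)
Your proof is correct. The monotonicity argument is exactly as in the paper, and your subadditivity argument---decomposing the equilibrium measure on $A\cup B$ and using the Engel form of Cauchy--Schwarz together with nonnegativity of the cross-energy---is clean and complete. The paper itself does not prove subadditivity in the text but simply cites~\cite{AOSS}; your argument is therefore more self-contained than what appears here. Your closing remark about why one should decompose the minimizer on $A\cup B$ rather than combine minimizers on $A$ and $B$ is well taken: the latter approach would indeed require positive semidefiniteness of $g_\beta$, which the paper never assumes for the abstract metric space $(V,d_V)$.
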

\begin{proof} The monotonicity follows  immediately from the definition, and a proof of the subadditivity property can be found in~\cite{AOSS}.  
\end{proof}

The next result provides the order of growth of the capacity of balls.  
\begin{lemma}\label{cap.ball}
There exist positive constants $c$ and $c'$, such that for any $x\in V$ and $r\ge 1$, 
$$c\cdot r^\beta \le {\rm Cap}_\beta(B(x,r))\le c'\cdot r^\beta.$$ 
\end{lemma}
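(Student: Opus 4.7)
The plan is to establish both bounds by a single, direct calculation using the uniform probability measure on the ball and the polynomial volume growth hypothesis~\eqref{poly}.

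First I would fix $x\in V$, $r\ge 1$, and let $\nu$ denote the uniform probability measure on $B(x,r)$. By the definition of the capacity, it suffices to show $\mathcal E_\beta(\nu) \asymp r^{-\beta}$, where
\[
\mathcal E_\beta(\nu) = \sum_{y,z\in B(x,r)} g_\beta(y,z)\,\nu(y)\,\nu(z) = \frac{1}{|B(x,r)|^2}\sum_{y,z\in B(x,r)}\frac{1}{1+d_V(y,z)^\beta}.
\]
A lower bound of the order $r^{-\beta}$ on $\mathcal E_\beta(\nu)$ is immediate: for any $y,z\in B(x,r)$ we have $d_V(y,z)\le 2r$, so $g_\beta(y,z)\ge (1+(2r)^\beta)^{-1}\ge c\, r^{-\beta}$, hence $\mathcal E_\beta(\nu)\ge c\,r^{-\beta}$, which yields the upper bound $\mathrm{Cap}_\beta(B(x,r))\le c^{-1}r^\beta$.

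For the matching upper bound on $\mathcal E_\beta(\nu)$, the central input is the standard dyadic computation under polynomial volume growth: for any $y\in V$ and $R\ge 1$,
\[
\sum_{z\in B(y,R)} g_\beta(y,z) \;\le\; C\sum_{k=0}^{\lceil \log_2 R\rceil}\frac{|B(y,2^{k+1})|}{1+2^{k\beta}} \;\le\; C'\sum_{k=0}^{\lceil\log_2 R\rceil} 2^{k(d-\beta)} \;\le\; C''\,R^{d-\beta},
\]
where the last inequality uses crucially that $\beta<d$, making the geometric series dominated by its last term. Applying this with $R=2r$, and noting $B(x,r)\subseteq B(y,2r)$ for every $y\in B(x,r)$, one gets
\[
\sum_{y,z\in B(x,r)} g_\beta(y,z)\;\le\;|B(x,r)|\cdot C''(2r)^{d-\beta},
\]
and dividing by $|B(x,r)|^2$ and using $|B(x,r)|\ge c_1 r^d$ produces $\mathcal E_\beta(\nu)\le C\,r^{-\beta}$. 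This gives the matching lower bound $\mathrm{Cap}_\beta(B(x,r))\ge c\,r^\beta$.

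I do not anticipate any substantive obstacle here: the argument is essentially a one-step calculation, the only subtlety being the verification of the dyadic estimate, whose convergence hinges on $\beta<d$. Small values of $r$ (in $[1,2]$) are absorbed into the multiplicative constants, since then both sides of the target inequality are bounded above and below by positive constants.
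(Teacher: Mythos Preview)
Your argument is correct, with one caveat in the framing: the claim that ``it suffices to show $\mathcal E_\beta(\nu)\asymp r^{-\beta}$'' for the \emph{uniform} measure $\nu$ is not literally right for the upper bound on capacity, since a lower bound on the energy of one particular measure does not bound the infimum. However, your actual computation uses the pointwise inequality $g_\beta(y,z)\ge (1+(2r)^\beta)^{-1}$ for all $y,z\in B(x,r)$, which gives $\mathcal E_\beta(\mu)\ge c\,r^{-\beta}$ for \emph{every} probability measure $\mu$ on $B(x,r)$, and this is exactly what is needed. So the proof goes through; just make this quantifier explicit.

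The comparison with the paper: for the upper bound on capacity both proofs are identical (the pointwise lower bound on $g_\beta$). For the lower bound on capacity the paper works via the dual formulation $\widetilde{\mathrm{Cap}}_\beta$ with the constant test function $\varphi=\mathbf 1_{B(x,r)}/\sup_z g_\beta(z,B(x,r))$ and then invokes the equivalence~\eqref{equiv.cap}, whereas you stay with the original definition and estimate the energy of the uniform measure directly. Both routes rest on the same dyadic computation $\sum_{z\in B(y,2r)} g_\beta(y,z)\le C r^{d-\beta}$; your version has the mild advantage of being self-contained and not appealing to~\eqref{equiv.cap}.
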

\begin{proof}
For the upper bound, it suffices to notice that for any $y,z\in B(x,r)$, one has $g_\beta(y,z)\ge 1/(1+(2r)^{\beta})$. Then we deduce immediately from the definition that 
$\textrm{Cap}_\beta(B(x,r)) \le 1+(2r)^\beta$. 
As for the lower bound, let $\varphi(y) = \frac{\mathbf 1\{y\in B(x,r)\}}{\sup_z g_\beta(z,B(x,r))}$. 
By definition one has 
$$\widetilde{{\rm Cap}}_\beta(B(x,r))\ge \sum_{y\in B(x,r)} \varphi(y) \ge c_0\cdot \frac{r^d}{\sup_z g_\beta(z,B(x,r))}\ge c\cdot r^\beta,$$ for some positive constants $c_0$ and $c$. The proof of the lower bound then follows from~\eqref{equiv.cap}. 
\end{proof}

The lower bound in the previous result can be generalized as follows. 
\begin{lemma}\label{cap.lowerbound}
There exists $c>0$, such that for any finite $A\subset V$, 
$${\rm Cap}_\beta(A)\ge c \cdot |A|^{\beta/ d}. $$ 
\end{lemma}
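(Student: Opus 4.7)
The plan is to exploit the first equivalent definition of the $\beta$-capacity and test it against the uniform probability measure $\nu$ on $A$, namely $\nu(x) = |A|^{-1}\mathbf{1}\{x\in A\}$. This gives immediately
$$\mathrm{Cap}_\beta(A)^{-1} \;\le\; \frac{1}{|A|^2}\sum_{x,y\in A} g_\beta(x,y),$$
so it suffices to show that uniformly in $x\in A$,
$$\sum_{y\in A} g_\beta(x,y) \;\le\; C\cdot |A|^{1-\beta/d},$$
since then $\mathrm{Cap}_\beta(A)^{-1}\le C|A|^{-\beta/d}$, as desired.

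To control $\sum_{y\in A} g_\beta(x,y)$ for a fixed $x\in A$, I would use a discrete rearrangement: since $g_\beta(x,\cdot)$ depends only on $d_V(x,\cdot)$ and is decreasing in this distance, the sum is largest when $A$ is replaced by a ball centered at $x$ of the same cardinality. Concretely, choose $r$ to be the smallest positive real number such that $|B(x,r)|\ge |A|$; by the polynomial volume growth~\eqref{poly}, $r\le c\,|A|^{1/d}$. Write $A_1=A\setminus B(x,r)$ and $A_2=B(x,r)\setminus A$. Then $|A_2|\ge |A_1|$, and every $y\in A_1$ satisfies $d_V(x,y)>r$ while every $z\in A_2$ satisfies $d_V(x,z)\le r$, so $\max_{y\in A_1} g_\beta(x,y)\le \min_{z\in A_2} g_\beta(x,z)$. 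This readily gives $\sum_{y\in A_1} g_\beta(x,y)\le \sum_{z\in A_2} g_\beta(x,z)$, and adding the common part $A\cap B(x,r)$ on both sides yields
$$\sum_{y\in A} g_\beta(x,y) \;\le\; \sum_{y\in B(x,r)} g_\beta(x,y).$$

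Finally, a standard layer-cake style estimate, combined with~\eqref{poly} and the assumption $\beta<d$, gives
$$\sum_{y\in B(x,r)} \frac{1}{1+d_V(x,y)^\beta} \;\le\; C\,r^{d-\beta} \;\le\; C\,|A|^{(d-\beta)/d},$$
which is exactly the bound we wanted. Putting everything together proves the lemma.

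There is no genuine obstacle: the argument is purely a discrete analogue of the rearrangement inequality for Riesz potentials, and it uses only the polynomial volume growth~\eqref{poly} and the assumption $\beta\in(0,d)$, with no appeal to any property of the random graph $\mathcal G$.
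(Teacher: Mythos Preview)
Your proof is correct and follows essentially the same route as the paper: both arguments reduce to the rearrangement bound $\sup_{x}\sum_{y\in A} g_\beta(x,y)\le C|A|^{1-\beta/d}$, which you spell out carefully while the paper only quotes it. The one cosmetic difference is that you test the energy definition of $\mathrm{Cap}_\beta$ against the uniform probability measure, whereas the paper tests the dual functional $\widetilde{\mathrm{Cap}}_\beta$ against the constant function $\varphi=\mathbf{1}_A/\sup_x g_\beta(x,A)$ and then invokes~\eqref{equiv.cap}; your version is thus slightly more self-contained.
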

\begin{proof} 
Consider $\varphi$ the function which is zero outside $A$, and constant on $A$ equal to $\tfrac{1}{ \sup_{x\in A} g_\beta(x,A)}$. 
A simple rearrangement inequality shows that $\sup_{x\in A} g_\beta(x,A) \le C\cdot |A|^{1 - \beta/d}$, for some constant $C>0$, which concludes the proof, using~\eqref{equiv.cap}. 
\end{proof}

For the next result, whose proof will be given in the appendix, it is convenient to introduce the following notation. For $A\subset V$, and $r>0$, we write
\begin{equation}\label{not.BAr}
B(A,r) = \cup_{x\in A} B(x,r).
\end{equation}
Also, a set $A\subset V$ is said to be {\it $s$-well separated}, if $d_V(x,y)\ge s$, for all $x\neq y\in A$. 

\begin{lemma} \label{lem.cap.extract}
There exists a constant $c>0$, such that for any $r\ge 1$, and any $2r$-well separated finite set $A\subset V$, one can find $U\subseteq A$, such that
$${\rm Cap}_\beta\big(B(U,r)\big) \ge c|U|\cdot r^\beta, \quad \textrm{and} \quad {\rm Cap}_\beta\big(B(U,r)\big)  \ge c \cdot {\rm Cap}_\beta\big( B(A,r)\big)  . $$
\end{lemma}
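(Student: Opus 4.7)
The plan is to use the dual variational characterization~\eqref{equiv.cap} and extract $U$ by thresholding on a local mass.

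First, by~\eqref{equiv.cap} pick $\varphi:V\to[0,\infty)$ supported on $B(A,r)$ with $\|g_\beta*\varphi\|_\infty\le 1$ and $\sum_y\varphi(y)\ge c\,\text{Cap}_\beta(B(A,r))$. For $x\in A$, set the local mass $s(x):=\sum_{y\in B(x,r)}\varphi(y)$; since $A$ is $2r$-well separated the balls $B(x,r)$ are pairwise disjoint, so $\sum_{x\in A}s(x)=\sum_y\varphi(y)$. Because $g_\beta(x,y)\ge c\,r^{-\beta}$ on $B(x,r)$ (for $r\ge 1$), the constraint $\|g_\beta*\varphi\|_\infty\le 1$ forces the uniform ceiling $s(x)\le Cr^\beta$ for every $x\in A$.

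The natural candidate is $U:=\{x\in A:s(x)\ge c_0\,r^\beta\}$ for a small absolute constant $c_0$. The first inequality is then immediate: the restriction $\varphi\,\mathbf{1}_{B(U,r)}$ inherits $\|g_\beta\ast(\varphi\mathbf{1}_{B(U,r)})\|_\infty\le 1$ and is thus admissible in the variational principle for $\widetilde{\text{Cap}}_\beta(B(U,r))$, yielding $\widetilde{\text{Cap}}_\beta(B(U,r))\ge\sum_{x\in U}s(x)\ge c_0\,r^\beta|U|$; invoking~\eqref{equiv.cap} gives the desired $\text{Cap}_\beta(B(U,r))\gtrsim r^\beta|U|$.

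The real difficulty is the second inequality $\text{Cap}_\beta(B(U,r))\gtrsim \text{Cap}_\beta(B(A,r))$, which reduces to showing $\sum_{x\in U}s(x)\gtrsim\sum_{x\in A}s(x)$. The naive estimate $\sum_{A\setminus U}s(x)<c_0 r^\beta|A|$ is too weak when $|A|$ is much larger than $\text{Cap}_\beta(B(A,r))/r^\beta$. My plan is iterative: if $U$ captures less than half the total mass, replace $A$ by $A\setminus U$ (the test function $\varphi\,\mathbf{1}_{B(A\setminus U,r)}$ remains admissible, hence $\widetilde{\text{Cap}}_\beta(B(A\setminus U,r))\ge\tfrac12\widetilde{\text{Cap}}_\beta(B(A,r))$), and reapply the extraction using the new equilibrium function on $A\setminus U$, which redistributes more concentrated mass onto the surviving balls. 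The main obstacle is closing this iteration with an absolute constant $c$ independent of $|A|$: a direct recursion on $|A|$ degrades the constant, and the clean argument likely proceeds via a dyadic pigeonhole across the levels $A_k:=\{x\in A:s(x)\in[2^{-k-1}Cr^\beta,2^{-k}Cr^\beta)\}$, combined with the observation (drawing on monotonicity and subadditivity from Lemma~\ref{cap.translation}) that $\beta$-capacity is essentially carried by a bounded number of such levels, or equivalently by a one-shot extraction of a maximal sub-packing of $A$ at an adaptively chosen scale $s\ge 2r$ on which the balls $B(x,r)$ contribute near-additively to $\beta$-capacity.
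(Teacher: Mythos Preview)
Your deterministic thresholding has a genuine gap in the second inequality, which you identify but do not close. Consider the extreme case where $A$ is tightly clustered so that ${\rm Cap}_\beta(B(A,r))\asymp r^\beta$ while $|A|$ is arbitrarily large: the equilibrium function then has total mass $\asymp r^\beta$ spread over $|A|$ disjoint balls, so every $s(x)\asymp r^\beta/|A|\ll c_0 r^\beta$ and your $U$ is empty. Your iteration does not fix this, since re-running on $A\setminus U=A$ with a fresh equilibrium function reproduces the same diffuse configuration; nothing forces concentration in finitely many steps with a uniform constant. The dyadic pigeonhole likewise fails here: all $s(x)$ lie in a single level $A_{k^*}=A$, but then the first inequality for $U=A_{k^*}$ would demand ${\rm Cap}_\beta(B(A,r))\gtrsim |A|\,r^\beta$, which is exactly what is false. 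What is needed in this example is $|U|=1$, and no fixed threshold extracts a single point from uniformly small masses.

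The paper takes a different route that handles all regimes at once: it selects a \emph{random} $\mathcal U\subseteq A$ by including each $x\in A$ independently with probability $\overline\varphi_x\propto s(x)/r^\beta$. Linearity of expectation gives $\mathbb E[|B(\mathcal U,r)|]\gtrsim r^{d-\beta}\,{\rm Cap}_\beta(B(A,r))$, while the constraint $\|g_\beta*\varphi\|_\infty\le 1$ bounds the expected energy $\mathbb E\big[\sum_{y,z\in B(\mathcal U,r)}g_\beta(y,z)\big]\lesssim r^{2(d-\beta)}\,{\rm Cap}_\beta(B(A,r))$. Chebyshev and Markov then produce a realization satisfying both bounds simultaneously, and the variational inequality ${\rm Cap}_\beta(B(\mathcal U,r))\ge |B(\mathcal U,r)|^2\big/\sum_{y,z}g_\beta(y,z)$ yields both conclusions. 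The key feature is that random thinning with weights proportional to $s(x)$ automatically adapts: in the clustered example above each point survives with probability $\asymp 1/|A|$, so $|\mathcal U|\asymp 1$ as required.
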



\section{Proof of Theorem~\ref{thm.expmoment}}\label{sec.proofmoment} 
We follow a similar approach as in the proof  of Theorem 1.5 in~\cite{ASS23}, we integrate the occupation field of an open cluster  against a test function $\varphi$, in order to optimize and unravel a rate function which turns out to be a capacity.

More precisely, we aim to show, that there exists a constant $C>0$, such that for any $n\ge 1$, and any $x\in V$, 
\begin{equation}\label{induction}
\mathbb E\left[\left(\sum_{y\in \mathcal C(x)} \varphi(y)\right)^n\right] \le 
C^{n-1} \cdot (1\vee (n-2))! \cdot g_\alpha *\varphi(x). 
\end{equation} 
Note first that it would prove the theorem, by summation over $n$. Observe now that~\eqref{induction} immediately follows from~\eqref{Halpha} for $n=1$, and also that for some constant $C_0>0$, 
\begin{equation}
    \label{n=1}
\mathbb E\left[\sum_{y\in \mathcal C(x)} \varphi(y)\right] = \sum_{y\in V} \tau(x,y)\varphi(y) = \tau * \varphi(x)\le C_0 \cdot g_\alpha*\varphi(x) \le C_0,
\end{equation}
where the last equality follows from the hypothesis $\|g_{2\alpha-d}*\varphi \|_\infty\le 1$, and the fact that $g_\alpha \le g_{2\alpha-d}$. 
We now prove the induction step. Assume that~\eqref{induction} holds for some $n-1\ge 1$, and let us prove it for $n$. Expanding the $n$-th moment, we get that 
$$\mathbb E\left[\left(\sum_{y\in \mathcal C(x)} \varphi(y)\right)^n\right] = 
\sum_{y_1,\dots,y_n \in V} 
\mathbb P\big(y_1,\dots,y_n \in \mathcal C(x)\big) \cdot \prod_{i=1}^n \varphi(y_i). $$
On the event $\{y_1,\dots,y_n\in \mathcal C(x)\}$
we distinguish two cases. Either there exists $i\in \{1,\dots,n\}$, such that all the points $y_1,\dots,y_n$, can be connected to $y_i$ \textit{disjointly} from a self-avoiding open path connecting $x$ to $y_i$;  
or there must exist a vertex $y\in \mathcal C(x)$ (possibly $x$), and a partition of the points $y_1,\dots,y_n$ into at least $2$ nonempty  subsets, which are connected {\it disjointly} to $y$, and also {\it disjointly} from a path going from $x$ to $y$. In both cases denote by $M(x;y_1,\dots,y_n)$ this vertex $y$ which disconnects $x$ from the other vertices (if there are several choices, one may choose one at random). 
Concerning the first case, one has using BK inequality, 
\begin{align*}
 \mathbb P(y_1,\dots,y_n \in \mathcal C(x), M(x;y_1,\dots,y_n) \in \{y_1,\dots,y_n\})  \le & \sum_{i=1}^n \mathbb P\Big(\{x\longleftrightarrow y_i\}\circ \{y_j\longleftrightarrow y_i, \forall j\neq i\}\Big) \\
& \le \sum_{i=1}^n \tau(x,y_i) \cdot \mathbb P(y_j\longleftrightarrow y_i, \forall j\neq i),  
\end{align*}
and thus by the induction hypothesis, 
\begin{align}\label{nmoment1}
\nonumber & \sum_{y_1,\dots,y_n \in V} 
\mathbb P\Big(y_1,\dots,y_n \in \mathcal C(x),M(x;y_1,\dots,y_n) \in \{y_1,\dots,y_n\}\Big) \cdot \prod_{i=1}^n \varphi(y_i)\\
\nonumber & \le n \cdot \sum_{y\in V} \tau(x,y) \varphi(y) \cdot \mathbb E\left[\left(\sum_{z\in \mathcal C(y)} \varphi(z)\right)^{n-1}\right] \\
& \le 3 C^{n-2}\cdot (n-2)! \cdot g_\alpha* \varphi(x), 
\end{align}
using for the last inequality that $n(1\vee (n-3))! \le 3 (n-2)!$, for all $n\ge 2$. 
Consider now the second case, when $M(x;y_1,\dots,y_n) \notin \{y_1,\dots,y_n\}$.   A union bound gives  
\begin{align*}
& \mathbb P(y_1,\dots,y_n \in \mathcal C(x), M(x;y_1,\dots,y_n) \notin \{y_1,\dots,y_n\}) \\
& \leq \sum_{L= 2}^{n} \sum_{y\in V}
\sum_{\cup_{i=1}^L I_i = \{1,\dots,n\}} \mathbb P\left(\{y\longleftrightarrow x\}\circ \{y_j \in \mathcal C(y), \forall j\in I_1\}\circ \dots \circ \{y_j\in  \mathcal C(y), \forall j\in I_L \}\right), \end{align*}
where the second sum runs over disjoint subsets $I_1,\dots,I_L$, whose union is $\{1,\dots,n\}$. 
We thus obtain using BK inequality at the first line, and the induction hypothesis together with~\eqref{n=1} for the last inequality, 
\begin{align}\label{nmoment}
\nonumber & \sum_{y_1,\dots,y_n \in V} 
\mathbb P\Big(y_1,\dots,y_n \in \mathcal C(x),M(x;y_1,\dots,y_n) \notin \{y_1,\dots,y_n\}\Big) \cdot \prod_{i=1}^n \varphi(y_i) \\
\nonumber & \le \sum_{y\in V} \tau(x,y) \sum_{L=2}^{n} \sum_{\substack{n_1,\dots,n_L\\ \sum_i n_i = n \\  
n_i \ge 1, \ \forall i}} \frac{n!}{n_1!\dots n_L!} \prod_{i=1}^L \mathbb E\left[\left(\sum_{z\in \mathcal C(y)} \varphi(z)\right)^{n_i}\right] \\ 
& \le  n! \cdot \sum_{y\in V} \tau(x,y) \sum_{L=2}^{n}  C^{n-L}C_0^{L-2} \sum_{\substack{n_1,\dots,n_L\\ \sum_i n_i = n \\
n_i \ge 1, \ \forall i}} \prod_{i=1}^L \frac{1}{n_i(n_i-1)} \cdot (g_\alpha*\varphi(y))^2, 
\end{align}
with the convention that $\frac{1}{n_i(n_i-1)} = 1$, when $n_i=1$. 
Now it was observed in~\cite{ASS23} that there exists a constant $c>0$, such that for any $n\ge 2$ and $L\ge 2$, 
\begin{equation}\label{claim1}
\sum_{\substack{n_1,\dots,n_L\\ \sum_i n_i = n \\ n_i \ge 1, \ \forall i}} \prod_{i=1}^L \frac{1}{n_i(n_i-1)}  \le \frac{c^{L-1}}{n^2}.
\end{equation}
Furthermore, using~\eqref{Halpha} and~\eqref{poly} together with the fact that $\|g_{2\alpha -d}*\varphi\|_\infty \le 1$, we get  
\begin{equation}\label{claim2}
\sum_{y\in V} \tau(x,y)\cdot (g_\alpha*\varphi(y))^2 \le c' \cdot g_\alpha*\varphi(x),
\end{equation}
for some constant $c'>0$. 
Inserting~\eqref{claim1} and~\eqref{claim2} in~\eqref{nmoment} and using also~\eqref{nmoment1} yields 
$$\mathbb E\left[\left(\sum_{y\in \mathcal C(x)} \varphi(y)\right)^n\right] \le C^{n-2}\cdot (n-2)! \cdot \Big(3 +\sum_{L\ge 2} \frac{(cC_0)^{L-1}c'}{C^{L-2}}\Big) \cdot g_\alpha*\varphi(x), $$ 
proving well the induction step, provided $C$ is taken large enough. \hfill $\square$

\section{Proof of Theorem~\ref{LD.intersection.finite}}\label{sec.proof.main}
We present here a proof of Theorem~\ref{LD.intersection.finite}. The approach we developed for studying the intersection of the ranges of two independent random walks in~\cite{AS23} relies on viewing the excess deviation  problem as a problem
of random walk in random environment, where the walk and the environment are decoupled and the environment is played by the occupation field of one walk. 
Namely, we decompose
the occupation field of one of the range, or cluster here, intersected with a large ball, say $B(0,R)$, into
(random) regions where on a given space-scale $r$, the occupation density exceeds some value $\rho$. Using features of an appropriate
{\it capacity}, upper bounds on 
the volume of such random regions can be established for all couples $(r,\rho)$ satisfying some constraint that also depends on $R$, see~\eqref{cond.rrho} below. 
Then dealing with the intersection of two independent  clusters, after
we slice the occupation field according to the occupation density of one of them, we transform
the problem into estimating the chance the other cluster occupies
a random region of prescribed volume and density, more than its typical value. Thus, through this approach, we reduce the problem into understanding the covering property of a single process.

Before we enter in the core of the argument, we shall need a number of preparatory lemmas, which we gather in Subsection~\ref{subsec.prelim}. Then we conclude the proof in Subsection~\ref{subsec.proofmain}.  

\subsection{Preliminaries}\label{subsec.prelim} 
Recall the notation from~\eqref{not.BAr}, and the definition following it. 

\begin{proposition}\label{lem.unionballs}
Assume that~\eqref{Halpha} holds, for some 
$\alpha \in (d/2,d)$. There exists a constant $\kappa>0$, such that for any $r\ge 1$, any $\rho >0$, and any $2r$-well separated finite set $A\subset V$, one has 
$$\mathbb P\Big(|\mathcal C_0 \cap B(x,r)|\ge \rho r^d, \textrm{ for all } x \in A\Big) \le \exp\Big(- \kappa \rho\cdot {\rm Cap}_{2\alpha-d}\big( B(A,r)\big)\Big). $$ 
\end{proposition}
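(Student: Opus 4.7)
The plan is to reduce the proposition to Theorem~\ref{thm.expmoment} applied to a carefully chosen ball-averaged test function. First, I would invoke Lemma~\ref{lem.cap.extract} to extract a subset $U \subseteq A$ with ${\rm Cap}_{2\alpha-d}(B(U,r)) \ge c \cdot {\rm Cap}_{2\alpha-d}(B(A,r))$. Since $A$ is $2r$-well-separated, so is $U$, and in particular the balls $\{B(x,r)\}_{x \in U}$ are pairwise disjoint; this disjointness will be crucial for aggregating the local occupation estimates.

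Using the equivalence~\eqref{equiv.cap}, I would fix $\varphi_0 : V \to [0,\infty)$ supported on $B(U,r)$ with $\|g_{2\alpha-d} * \varphi_0\|_\infty \le 1$ and $\sum_y \varphi_0(y) \ge c' \cdot {\rm Cap}_{2\alpha-d}(B(U,r))$. Setting $a_x := \sum_{y \in B(x,r)} \varphi_0(y)$ and $C_0$ an absolute constant to be determined, define the ball-averaged test function
\[
\varphi(y) := \frac{1}{C_0} \sum_{x \in U} \frac{a_x}{|B(x,r)|}\, \mathbf 1_{B(x,r)}(y).
\]
The core computation is then the pointwise inequality: for every $z \in V$ and $x \in U$, writing $G_r(z,x) := \sum_{w \in B(x,r)} g_{2\alpha-d}(z,w)$,
\[
\frac{a_x}{|B(x,r)|}\, G_r(z,x) \;\le\; C_0 \sum_{y \in B(x,r)} g_{2\alpha-d}(z,y)\, \varphi_0(y).
\]
For $d_V(z,x) \ge 2r$, both sides are comparable to $a_x \cdot g_{2\alpha-d}(z,x)$, since $g_{2\alpha-d}(z,y) \asymp g_{2\alpha-d}(z,x)$ throughout $B(x,r)$. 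For $d_V(z,x) < 2r$, polynomial volume growth~\eqref{poly} combined with $\alpha < d$ gives $G_r(z,x) \le C\, r^{2(d-\alpha)}$, hence $G_r(z,x)/|B(x,r)| \le C\, r^{-(2\alpha-d)}$; on the other hand $g_{2\alpha-d}(z,y) \ge c\, r^{-(2\alpha-d)}$ for every $y \in B(x,r) \subset B(z,3r)$, so both sides are of order $a_x \cdot r^{-(2\alpha-d)}$. Summing over $x \in U$ yields $g_{2\alpha-d} * \varphi(z) \le g_{2\alpha-d} * \varphi_0(z) \le 1$ for all $z$, while $\sum_y \varphi(y) = C_0^{-1} \sum_x a_x \ge c'' \cdot {\rm Cap}_{2\alpha-d}(B(A,r))$.

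Finally, on the event $E := \{|\mathcal C_0 \cap B(x,r)| \ge \rho r^d,\ \forall x \in A\}$, since $U \subseteq A$ and the balls $B(x,r)$, $x \in U$, are disjoint,
\[
\sum_{y \in \mathcal C_0} \varphi(y) \;=\; \frac{1}{C_0}\sum_{x \in U} \frac{a_x}{|B(x,r)|}\, |\mathcal C_0 \cap B(x,r)| \;\ge\; c'''\, \rho\, {\rm Cap}_{2\alpha-d}(B(A,r)),
\]
and the conclusion follows from Theorem~\ref{thm.expmoment} and Chebyshev's exponential inequality. The main technical step is the two-regime comparison displayed above, i.e.\ transferring the potentially peaked equilibrium-type function $\varphi_0$ to a ball-uniform $\varphi$ without inflating $\|g_{2\alpha-d} * \varphi\|_\infty$. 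This is exactly where polynomial volume growth and the range $\alpha \in (d/2,d)$ enter, and it is the only nontrivial input beyond Lemma~\ref{lem.cap.extract} and Theorem~\ref{thm.expmoment}.
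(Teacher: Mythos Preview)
Your proof is correct and follows essentially the same approach as the paper: take a near-optimizer $\varphi_0$ for $\widetilde{\rm Cap}_{2\alpha-d}$, replace it by its ball-average, verify via a two-regime (near/far) comparison that the averaged function still satisfies $\|g_{2\alpha-d}*\varphi\|_\infty\le C$, and conclude via Theorem~\ref{thm.expmoment} and Chebyshev's exponential inequality. The only difference is that your initial extraction of $U\subseteq A$ via Lemma~\ref{lem.cap.extract} is superfluous --- the paper works directly with $A$ (handling the near contribution via Lemmas~\ref{cap.translation} and~\ref{cap.ball}), and your own two-regime estimate would go through verbatim with $U=A$.
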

\begin{proof}
Consider a function $\varphi$ which realizes the maximum in the definition of $\widetilde{\rm Cap}_{2\alpha-d}\big( B(A,r)\big)$. Define $\widetilde \varphi$ a function which is constant on each ball  $B(x,r)$, with $x \in A$, zero outside these balls, and such that for any $x\in A$, and $y\in B(x,r)$, $\widetilde \varphi(y) = \frac 1{r^d}\sum_{z\in B(x,r)} \varphi(z)$. 
Notice that for some constant $c>0$, one has $\|g_{2\alpha - d}* \widetilde \varphi\|_\infty \le c$. Indeed, given any $z\in V$, one has for some constant $C>0$, 
\begin{align*}
& \sum_{x\in A:d(x,z)\ge 2r} \sum_{y\in B(x,r)} g_{2\alpha-d}(z,y) \widetilde \varphi(y)  \le C \sum_{x\in A:d(x,z)\ge 2r} g_{2\alpha-d}(x,z) \sum_{y\in B(x,r)}  \widetilde \varphi(y) \\
& \le C^2 \sum_{x\in A:d(x,z)\ge 2r} \sum_{y\in B(x,r)} g_{2\alpha-d}(z,y) \varphi(y)  \le C^2,
\end{align*}
where the last inequality follows from the fact that by $\|g_{2\alpha-d}*\varphi \|_\infty \le 1$, by hypothesis. Furthermore, using that $\sum_{y\in B(z,3r)} g_{2\alpha-d}(z,y)\le Cr^{2d-2\alpha}$,  yields   
\begin{align*}
& \sum_{x\in A :d(x,z)\le 2r} \sum_{y\in B(x,r)} g_{2\alpha-d}(z,y) \widetilde \varphi(y) \le C\cdot r^{d-2\alpha} \sum_{x\in A : d(x,z) \le 2r} \sum_{y\in B(x,r)} \varphi(y) \\
& \le C^2 r^{d-2\alpha} \cdot {\rm Cap}_{2\alpha- d}\big(\cup_{x\in A : d(x,z)\le 2r} B(x,r)\big)\le C^3,
\end{align*}
where the second inequality follows from the definition of $\widetilde {{\rm Cap}}_{2\alpha- d}$ and~\eqref{equiv.cap}, and the third one from  Lemmas~\ref{cap.translation}  and~\ref{cap.ball}. 
Altogether, this proves our claim that for some constant $c>0$, one has $\|g_{2\alpha - d}* \widetilde \varphi\|_\infty \le c$. 
Now we observe that 
\begin{align*}
& \mathbb P\Big(|\mathcal C_0 \cap B(x,r)|\ge \rho r^d, \textrm{ for all } x \in A\Big) \le \mathbb P\Big(\frac 1c\sum_{z\in \mathcal C_0} \widetilde \varphi(z) \ge \frac{\rho}{c} \sum_{z\in V} \varphi(z)\Big),
\end{align*}
and thus the result follows from Chebyshev's exponential inequality and Theorem~\ref{thm.expmoment},  together with the definition of $\varphi$, and~\eqref{equiv.cap}.  
\end{proof}

\begin{lemma}\label{lem.densite.1}
There exists a constant $C>0$, such that for any $\rho>0$, any $r\ge 1$, and any finite set $\Lambda\subset V$ satisfying 
$$|\Lambda\cap B(x,r)|\le \rho \cdot |B(x,r)|,\quad \textrm{for all }x\in \Lambda,$$  
one has for any $R\ge r$, and any $x\in V$, 
$$|\Lambda\cap B(x,R)|\le C \cdot \rho R^d. $$
\end{lemma}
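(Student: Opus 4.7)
The plan is a standard Vitali-type covering argument based on the polynomial volume growth \eqref{poly}; nothing subtle is needed. First I would extract a maximal $r$-separated subset $A \subseteq \Lambda \cap B(x,R)$, i.e.\ a subset satisfying $d_V(y,z)\ge r$ for all distinct $y,z\in A$, maximal for inclusion with this property (such a set exists because $\Lambda$ is finite). By maximality every point of $\Lambda \cap B(x,R)$ lies within distance $r$ of some element of $A$, so
\[
\Lambda \cap B(x,R) \;\subseteq\; \bigcup_{y\in A} B(y,r).
\]

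Since $A \subseteq \Lambda$, the density hypothesis applies at every center $y\in A$, giving $|\Lambda \cap B(y,r)| \le \rho\cdot |B(y,r)| \le c_2\rho\, r^d$ by \eqref{poly}. A union bound then yields
\[
|\Lambda \cap B(x,R)| \;\le\; |A|\cdot c_2\rho\, r^d.
\]
To control $|A|$, I would use that the balls $\{B(y,r/2)\}_{y\in A}$ are pairwise disjoint by $r$-separation, and all contained in $B(x,R+r/2)\subseteq B(x,2R)$ since $R\ge r$. Comparing volumes via \eqref{poly} gives
\[
c_1 (r/2)^d\cdot |A| \;\le\; |B(x,2R)| \;\le\; c_2(2R)^d,
\]
hence $|A|\le (c_2/c_1)\, 4^d\, (R/r)^d$. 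Combining the two displays yields the desired inequality with $C=c_2^2\, 4^d/c_1$.

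There is no real obstacle here: the argument is entirely a packing/covering estimate, and the only thing to verify carefully is that the constant $C$ is independent of $r$, $R$, $\rho$, and $\Lambda$, which is automatic since all constants come from \eqref{poly}. The role of the hypothesis $R\ge r$ is only to absorb the $r/2$ enlargement into a factor of $2$ when comparing $B(x,R+r/2)$ with $B(x,2R)$.
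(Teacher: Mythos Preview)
Your proof is correct and follows essentially the same approach as the paper: both extract a maximal $r$-separated subset of $\Lambda\cap B(x,R)$, bound its cardinality by a disjoint-balls volume comparison via \eqref{poly}, and then sum the local density bound over the covering balls. Your handling of the enlargement $B(x,R+r/2)\subseteq B(x,2R)$ is in fact slightly more careful than the paper's, but the arguments are otherwise identical up to the constant.
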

\begin{proof}
Let $\Lambda$ be a set satisfying the hypothesis of the lemma for some $r\ge 1$ and $\rho>0$. Fix $x\in V$ and $R\ge r$, and assume that $\Lambda\cap B(x,R)$ is non-empty, as otherwise there is nothing to prove. Then we define inductively a sequence $(x_n)_{n\le n_0}$ in $B(x,R)$ as follows. First pick arbitrarily $x_1\in \Lambda\cap B(x,R)$. Now assuming, $x_1,\dots,x_n$ have been chosen, for some $n\ge 1$, pick arbitrarily $x_{n+1} \in \Lambda\cap B(x,R) \cap (\cup_{k\le n} B(x_k,r))^c $, if this set is nonempty. Otherwise stop the process and set $n_0=n$. Note that $n_0$ cannot be larger than $2^dc_2R^d/(c_1r^d)$, with $c_1$ and $c_2$ as in~\eqref{poly}, for otherwise we would have 
$$|B(x,R)|\ge \sum_{k=1}^{n_0} |B(x_k,r/2)| \ge n_0 c_1(r/2)^d > c_2 R^d,$$
contradicting~\eqref{poly}. Therefore, 
$$|\Lambda \cap B(x,R)| = |\bigcup_{k\le n_0} \Lambda\cap B(x_k,r)| \le c_2n_0 \rho r^d \le C\cdot \rho R^d,$$
with $C = 2^dc_2^2/c_1$.
\end{proof}

For the next result we need some additional notation. For positive $r$, $R$, and $\rho$, let 
$$\mathcal C_0^R(r,\rho) = \big\{x\in \mathcal C_0 : \rho\,  |B(x,r)| \le |\mathcal C_0 \cap B(x,r)\cap B(0,R) |\le 2 \rho\, |B(x,r)|\big\}. $$ 
\begin{proposition}\label{lem.COR1}
Assume that $\mathcal G$ satisfies~\eqref{Halpha}, for some $\alpha\in(d/2,d)$. 
There exist positive constants $C_0$ and $\kappa$, such that for any $r,R\ge 1$ and  $\rho>0$, satisfying 
\begin{equation}\label{cond.rrho}
\rho \cdot r^{2\alpha - d} \ge C_0\cdot \log R, 
\end{equation}
and any $L\ge 1$, one has 
$$\mathbb P(|\mathcal C_0^R(r,\rho)|>L)\le \exp\big(-\kappa \rho^{2(1-\frac{\alpha}{d})}\cdot L^{\frac{2\alpha}{d} -1}\big).
$$
\end{proposition}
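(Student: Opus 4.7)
The plan is to combine the capacity-based probability bound of Proposition~\ref{lem.unionballs} with a two-step geometric argument that extracts, from the event $|\mathcal{C}_0^R(r,\rho)|>L$, a good well-separated configuration of $r$-balls, and then to use the hypothesis $\rho\cdot r^{2\alpha-d}\ge C_0\cdot \log R$ to absorb the resulting combinatorial cost in the union bound.

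Write $\Lambda=\mathcal{C}_0^R(r,\rho)$. By definition every $y\in \Lambda$ satisfies $|\Lambda\cap B(y,r)|\le |\mathcal{C}_0\cap B(y,r)|\le 2\rho\,|B(y,r)|$, so Lemma~\ref{lem.densite.1} applied to $\Lambda$ gives $|\Lambda\cap B(y,s)|\le C\rho\cdot s^d$ for every $y\in V$ and $s\ge r$. On the event $|\Lambda|>L$, pick a maximal $2r$-separated subset $A\subseteq \Lambda$: by maximality the balls $\{B(x,2r)\}_{x\in A}$ cover $\Lambda$, and each such ball contains at most $C\rho\cdot r^d$ points of $\Lambda$, which forces
$$|A|\ge k_{\min}:=c\cdot\frac{L}{\rho\,r^d}.$$
Since $A$ is $2r$-separated, the balls $\{B(x,r)\}_{x\in A}$ are disjoint, so $|B(A,r)|\ge c|A|r^d\ge cL/\rho$, and Lemma~\ref{cap.lowerbound} yields ${\rm Cap}_{2\alpha-d}(B(A,r))\ge c(L/\rho)^{(2\alpha-d)/d}$. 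Next, apply Lemma~\ref{lem.cap.extract} to $A$ to produce $U\subseteq A$ with
${\rm Cap}_{2\alpha-d}(B(U,r))\ge c|U|r^{2\alpha-d}$ and ${\rm Cap}_{2\alpha-d}(B(U,r))\ge c\cdot{\rm Cap}_{2\alpha-d}(B(A,r))$. Combining the second bound with the subadditivity upper bound ${\rm Cap}_{2\alpha-d}(B(U,r))\le c'|U|r^{2\alpha-d}$ (Lemmas~\ref{cap.translation} and~\ref{cap.ball}) forces
$$|U|\ge j_{\min}:=c_0\cdot\frac{(L/\rho)^{(2\alpha-d)/d}}{r^{2\alpha-d}}.$$

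The event $\{|\Lambda|>L\}$ is therefore contained in the event that there exists a $2r$-separated set $U\subset B(0,R)$ of size $j\ge \max(1,j_{\min})$, satisfying both the linear capacity bound ${\rm Cap}_{2\alpha-d}(B(U,r))\ge c|U|r^{2\alpha-d}$ and $|\mathcal{C}_0\cap B(x,r)|\ge \rho|B(x,r)|$ for every $x\in U$. For a fixed such $U$, Proposition~\ref{lem.unionballs} bounds the probability by $\exp(-c\kappa\rho\,j\,r^{2\alpha-d})$, while the number of candidate $U$s of size $j$ is at most $\binom{c_2 R^d}{j}\le \exp(c'\,j\,\log R)$. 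Taking $C_0$ large enough in the hypothesis $\rho r^{2\alpha-d}\ge C_0\log R$ makes each summand at most $\exp(-c''j\rho r^{2\alpha-d})$, so the geometric sum over $j\ge \max(1,j_{\min})$ is bounded by $2\exp(-c''\max(1,j_{\min})\rho r^{2\alpha-d})$. A direct computation gives $j_{\min}\cdot\rho r^{2\alpha-d}=c_0\,\rho^{2(1-\alpha/d)}L^{2\alpha/d-1}$, which is the desired exponent when $j_{\min}\ge 1$, i.e.\ when $L\ge \rho r^d$; in the complementary regime $L<\rho r^d$ one has $\rho r^{2\alpha-d}\ge \rho^{2(1-\alpha/d)}L^{2\alpha/d-1}$, so the $j=1$ term alone already yields the target.

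The main obstacle is the choice of the right lower bound on ${\rm Cap}_{2\alpha-d}(B(U,r))$ inside the union bound: the generic bound ${\rm Cap}_{2\alpha-d}(B(U,r))\ge c|U|^{(2\alpha-d)/d}r^{2\alpha-d}$ that Lemma~\ref{cap.lowerbound} gives for arbitrary $U$ is too weak to absorb the combinatorial factor $\exp(c'j\log R)$ for large $j$, so the sum would diverge. The extraction step via Lemma~\ref{lem.cap.extract} is precisely what upgrades this to the linear bound $\ge c|U|r^{2\alpha-d}$, and it must be paired with Lemma~\ref{cap.lowerbound} applied at the bulk scale of $B(A,r)$ (via the density argument of Lemma~\ref{lem.densite.1}) to guarantee that the extracted $|U|$ is still as large as $j_{\min}$.
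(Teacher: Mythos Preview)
Your proof is correct and follows essentially the same route as the paper: extract a $2r$-separated family of size $\sim L/(\rho r^d)$ from $\mathcal C_0^R(r,\rho)$ via the density bound and Lemma~\ref{lem.densite.1}, apply Lemma~\ref{lem.cap.extract} to obtain $U$ with linear capacity $\gtrsim |U|\,r^{2\alpha-d}$ and with capacity $\gtrsim (L/\rho)^{(2\alpha-d)/d}$, then combine Proposition~\ref{lem.unionballs} with a union bound over the location of $U$, absorbing the combinatorial factor through~\eqref{cond.rrho}. Your write-up is in fact a bit more explicit than the paper's in two places: you derive the lower bound $|U|\ge j_{\min}$ from the subadditive upper bound on ${\rm Cap}_{2\alpha-d}(B(U,r))$, and you separate the regimes $L\ge \rho r^d$ and $L<\rho r^d$ in the final summation; the paper leaves both of these implicit. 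One small inaccuracy: the chain $|\Lambda\cap B(y,r)|\le |\mathcal C_0\cap B(y,r)|\le 2\rho|B(y,r)|$ is not literally what the definition of $\mathcal C_0^R(r,\rho)$ gives (the upper bound is on $|\mathcal C_0\cap B(y,r)\cap B(0,R)|$), but since $\Lambda\subset B(0,R+r)$ this is harmless up to constants, and the paper's own proof contains the same shorthand.
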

\begin{proof} 
We note that on the event $\{|\mathcal C_0^R(r,\rho)|>L\}$, there are at least $n_0=\lfloor \tfrac{L}{2^{d+1}C\rho r^d}\rfloor$ points in $C_0^R(r,\rho)$ which are at distance at least $2r$ one from each other, with $C$ as in Lemma~\ref{lem.densite.1}. To see this, define inductively $(x_n)_{n\le n_0}\in \mathcal C_0^R(r,\rho)$, as follows. First pick arbitrarily $x_1\in \mathcal C_0^R(r,\rho)$. 
Then if $x_1,\dots,x_n$ have been defined for some $n<n_0$, pick arbitrarily $x_{n+1}$ in $(\cup_{i\le n}B(x_i,2r))^c\cap \mathcal C_0^R(r,\rho)$, if the latter is nonempty. 
The claim is that this is always the case until $n=n_0$, which is readily seen by the fact that by Lemma~\ref{lem.densite.1}, for any $n< n_0$, one has $|\cup_{i\le n} \mathcal C_0\cap B(x_i,2r)|\le n_0\cdot C\rho (2r)^d \le  L/2$. Now by  applying Lemma~\ref{lem.cap.extract}, one can extract a subset $U\subset \{x_1,\dots,x_{n_0}\}$, such that for some $c>0$, 
$${\rm Cap}_{2\alpha -d}\big(B(U,r)\big)\ge c \cdot |U|\cdot  r^{2\alpha -d}, \quad \textrm{and} \quad 
{\rm Cap}_{2\alpha -d}\big(B(U,r)\big)\ge c \cdot {\rm Cap}_{2\alpha -d}\big(\cup_{k=1}^{n_0} B(x_k,r)\big). $$ 
Then recall that the points $(x_k)_{k\le n_0}$ are all at distance at least $2r$ one from each other, which implies  
$$\big|\bigcup_{k=1}^{n_0} B(x_k,r)\big|=  \sum_{k=1}^{n_0} |B(x_k,r)| \ge c_1 \cdot n_0 r^d,$$
with $c_1$ as in~\eqref{poly}. Together with  Lemma~\ref{cap.lowerbound}, this yields for some $c>0$,  
$${\rm Cap}_{2\alpha -d}\big(\cup_{k=1}^{n_0} B(x_k,r)\big) \ge c (L/\rho)^{\frac{2\alpha}{d}- 1}.$$
Moreover, given $|U|$, the number of possible choices for the set $U$ is at most $|B(0,R)|^{|U|} \le \exp(c'\cdot |U|\cdot \log R)$, for some $c'>0$. Taking $C_0$ sufficiently large in~\eqref{cond.rrho} ensures that this factor can be absorbed by the exponential decay given by Proposition~\ref{lem.unionballs}. As a result, using a union bound over all possible $U$, gives  that for some constant $\kappa>0$, 
\begin{equation*}
\mathbb P(|\mathcal C_0^R(r,\rho)|>L)\le  \exp\big( - \kappa \rho \cdot (L/\rho)^{\frac{2\alpha}{d}- 1}\big) =  \exp\big(-\kappa\cdot  \rho^{2(1-\frac{\alpha}{d}) }\cdot L^{\frac{2\alpha}{d}-1}\big), 
\end{equation*}
as wanted. 
\end{proof}
Define now 
$$\mathcal C_0^{R,*}(r,\rho) = \big\{x\in \mathcal C_0 :  |\mathcal C_0 \cap B(x,r)\cap B(0,R) |\ge  \rho \, |B(x,r)|\big\}. $$ 
 
\begin{corollary}\label{cor.COR}
Assume that $\mathcal G$ satisfies~\eqref{Halpha}, for some $\alpha \in (d/2,d)$. There exist positive constants $C$, $C_0$ and $\kappa$, such that for any $r,R\ge 1$ and  $\rho>0$, satisfying~\eqref{cond.rrho}, 
and any $L\ge 1$, one has 
$$\mathbb P(|\mathcal C_0^{R,*}(r,\rho)|>L)\le C\cdot \exp\big(-\kappa \rho^{2(1-\frac{\alpha}{d})}\cdot L^{\frac{2\alpha}{d} -1}\big).
$$
\end{corollary}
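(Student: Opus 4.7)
My plan is to deduce the corollary from Proposition~\ref{lem.COR1} via a dyadic decomposition of the density profile. By the polynomial volume growth~\eqref{poly}, the local density $|\mathcal C_0\cap B(x,r)|/|B(x,r)|$ is bounded above by a universal constant $\rho_{\max}=c_2/c_1$, so I can write
$$\mathcal C_0^{R,*}(r,\rho)=\bigsqcup_{k=0}^{K}\mathcal C_0^R(r,2^k\rho),\qquad K:=\lceil\log_2(\rho_{\max}/\rho)\rceil=O(\log(1/\rho)).$$
Hence $|\mathcal C_0^{R,*}(r,\rho)|=\sum_{k=0}^{K}|\mathcal C_0^R(r,2^k\rho)|$, and on the event $\{|\mathcal C_0^{R,*}(r,\rho)|>L\}$, by pigeonhole, $|\mathcal C_0^R(r,2^k\rho)|>w_k$ for some $k$, for any choice of weights with $\sum_k w_k\le L$.

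The crucial step is to choose the weights so that applying Proposition~\ref{lem.COR1} yields the same exponent at every level. Define
$$\gamma:=\frac{2(1-\alpha/d)}{2\alpha/d-1}>0,\qquad w_k:=\frac{L\cdot 2^{-k\gamma}}{C_\gamma},\qquad C_\gamma:=\sum_{k\ge 0}2^{-k\gamma}<\infty,$$
which is well-defined since $\alpha\in(d/2,d)$ ensures $\gamma>0$. A direct computation gives
$$(2^k\rho)^{2(1-\alpha/d)}\,w_k^{2\alpha/d-1}=\rho^{2(1-\alpha/d)}L^{2\alpha/d-1}\cdot C_\gamma^{-(2\alpha/d-1)},$$
independent of $k$. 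Since $2^k\ge 1$, the hypothesis~\eqref{cond.rrho} is inherited at every level, so Proposition~\ref{lem.COR1} applies whenever $w_k\ge 1$, and a union bound yields
$$\mathbb P(|\mathcal C_0^{R,*}(r,\rho)|>L)\le (K+1)\cdot\exp\bigl(-\kappa'\,\rho^{2(1-\alpha/d)}L^{2\alpha/d-1}\bigr).$$

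For the (finitely many) large-$k$ levels where $w_k<1$, Proposition~\ref{lem.COR1} does not apply directly; there I would bound $\mathbb P(|\mathcal C_0^R(r,2^k\rho)|\ge 1)$ via the one-point estimate from Proposition~\ref{lem.unionballs}, which under~\eqref{cond.rrho} gives $CR^d\cdot R^{-\kappa\, 2^k C_0}$, a quantity dominated by any fixed negative power of $R$ once $C_0$ is chosen large. The main obstacle is to absorb the prefactor $K+1=O(\log(1/\rho))$ into the constant $C$ of the statement uniformly in all parameters. This is handled by slightly reducing $\kappa'$ when the exponent $\rho^{2(1-\alpha/d)}L^{2\alpha/d-1}$ dominates $\log(K+1)$, while in the complementary regime where this exponent is bounded, the trivial bound $\mathbb P\le 1$ combined with~\eqref{cond.rrho} (which controls $\log(1/\rho)$ by a constant multiple of $\log r$) suffices to close the argument, up to a harmless enlargement of $C$.
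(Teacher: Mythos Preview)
Your dyadic decomposition and union-bound strategy is exactly the paper's approach, but your choice of weights creates a gap that your final paragraph does not close. By taking $w_k\propto 2^{-k\gamma}$ with $\gamma=\frac{2(1-\alpha/d)}{2\alpha/d-1}$ you force the exponent at every level to be \emph{equal}, which is why the prefactor $K+1\asymp\log(1/\rho)$ appears. This prefactor cannot be absorbed uniformly: take $L=1$ and $\rho\to 0$ (allowed under~\eqref{cond.rrho} once $r$ is large). Then the target exponent $E=\rho^{2(1-\alpha/d)}L^{2\alpha/d-1}\to 0$, so the right-hand side of the corollary stays bounded below by $Ce^{-\kappa}$, a fixed positive constant; but your bound is $(K+1)e^{-\kappa'E}\sim\log(1/\rho)\to\infty$, so no universal $C$ can work. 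Your dichotomy between ``$E$ large versus bounded'' leaves an unbounded intermediate regime $E\in[M,\,c\log(K+1)]$ uncovered, and the remark that~\eqref{cond.rrho} controls $\log(1/\rho)$ by $\log r$ is correct but irrelevant, since $C$ must not depend on $r$.

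The fix is simple and is what the paper does: instead of balancing the exponents, make them \emph{grow} with $k$. Take $w_k=c_\varepsilon L\cdot 2^{-k\varepsilon}$ with $\varepsilon=\frac{d-\alpha}{2\alpha-d}=\gamma/2$ (half your decay rate). Then $(2^k\rho)^{2(1-\alpha/d)}w_k^{2\alpha/d-1}$ equals a constant times $2^{k(1-\alpha/d)}\rho^{2(1-\alpha/d)}L^{2\alpha/d-1}$, so Proposition~\ref{lem.COR1} gives
\[
\sum_{k\ge 0}\exp\bigl(-\kappa\,c_\varepsilon^{2\alpha/d-1}\,2^{k(1-\alpha/d)}\rho^{2(1-\alpha/d)}L^{2\alpha/d-1}\bigr)\le C\exp\bigl(-\kappa'\rho^{2(1-\alpha/d)}L^{2\alpha/d-1}\bigr),
\]
a geometric series in which the first term dominates. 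This removes the $(K+1)$ factor entirely and also makes the separate treatment of levels with $w_k<1$ unnecessary (those terms are empty once $2^k\rho>1$ anyway).
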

\begin{proof} Let $\varepsilon = \tfrac{d-\alpha}{2\alpha-d}$, and $c_\varepsilon = 1/(\sum_{i\ge 0} 2^{-\varepsilon i})$. Note that $\mathcal C_0^{R,*}(r,\rho) = \cup_{i\ge 0} \mathcal C_0^R(r,2^i\rho)$, and thus by a union bound, and Proposition~\ref{lem.COR1}, 
\begin{align*}
\mathbb P(|\mathcal C_0^{R,*}(r,\rho)|>L) & \le 
\sum_{i\ge 0} \mathbb P(|\mathcal C_0^R(r,2^i\rho)|>c_\varepsilon \cdot \frac{L}{2^{\varepsilon i}}) \\
& \le \sum_{i\ge 0} \exp\big(-\kappa\, c_\varepsilon \cdot 2^{i(1-\frac{\alpha}{d})}\cdot   \rho^{2(1-\frac{\alpha}{d})}\cdot L^{\frac{2\alpha}{d} -1}\big)\\
& \le C\cdot \exp\big(-\kappa'  \rho^{2(1-\frac{\alpha}{d})}\cdot L^{\frac{2\alpha}{d} -1}\big),
\end{align*}
for some constants $\kappa,\kappa'>0$. 
\end{proof}
One last ingredient before we proceed to the proof of Theorem~\ref{LD.intersection.finite} is the following elementary fact. 
\begin{lemma}\label{lem.densite}
Fix $\beta\in (0,d)$. There exists a constant $C>0$, such that the following holds, for any $\rho>0$ and $r\ge 1$. Let $\Lambda\subset V$ be a finite set, such that $$|\Lambda\cap B(x,r)|\le \rho \cdot |B(x,r)|,\quad \textrm{for all }x\in \Lambda.$$  
Then for any $x\in V$, 
$$g_\beta(x,\Lambda\cap B(x,r)^c)\le C \cdot \rho^{\beta/d}\cdot |\Lambda|^{1-\beta/d}. $$ 
\end{lemma}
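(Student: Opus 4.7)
The plan is to bound $g_\beta(x,\Lambda\cap B(x,r)^c) = \sum_{z\in\Lambda,\, d_V(x,z)>r} \frac{1}{1+d_V(x,z)^\beta}$ by a dyadic decomposition of the complement of $B(x,r)$, using two competing estimates on the volume of $\Lambda$ in each annulus: the trivial bound $|\Lambda|$ and the polynomial bound provided by Lemma~\ref{lem.densite.1}.

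More precisely, for $k\ge 0$ set $A_k = B(x,2^{k+1}r)\setminus B(x,2^kr)$. Since $2^kr\ge 1$, for every $z\in A_k$ one has the pointwise estimate $g_\beta(x,z)\le (2^kr)^{-\beta}$. On the volume side, Lemma~\ref{lem.densite.1} (applied with the current $x\in V$ and radius $2^{k+1}r\ge r$) yields $|\Lambda\cap A_k|\le |\Lambda\cap B(x,2^{k+1}r)|\le C\rho(2^{k+1}r)^d$. Combining with the trivial bound, we get
\begin{equation*}
\sum_{z\in\Lambda\cap A_k} g_\beta(x,z) \;\le\; (2^kr)^{-\beta}\cdot \min\bigl(C'\rho(2^kr)^d,\, |\Lambda|\bigr).
\end{equation*}

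The crossover scale between the two bounds is $R_\ast := (|\Lambda|/\rho)^{1/d}$. For indices $k$ with $2^kr\le R_\ast$ I will use the first (volume) bound, giving a geometric sum in $2^k$ with ratio $2^{d-\beta}>1$, dominated by its last term, of order $\rho R_\ast^{d-\beta} = \rho^{\beta/d}|\Lambda|^{1-\beta/d}$. For indices $k$ with $2^kr> R_\ast$ I will use the trivial bound $|\Lambda|$, giving a geometric sum with ratio $2^{-\beta}<1$, dominated by its first term, of order $|\Lambda|\cdot R_\ast^{-\beta} = \rho^{\beta/d}|\Lambda|^{1-\beta/d}$. Adding the two contributions yields the claimed estimate, with $C$ depending only on $d,\beta$ and the constants from~\eqref{poly}.

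No real obstacle is expected; the only mildly delicate point is keeping the geometric sums under control near the crossover scale, but both sides are of the same order at $R_\ast$ so the matching is automatic. The fact that $0<\beta<d$ is precisely what ensures that both geometric series converge at the correct endpoints.
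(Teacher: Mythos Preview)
Your proof is correct and follows essentially the same approach as the paper: decompose $B(x,r)^c$ into annuli, bound the pointwise values of $g_\beta$ on each, control $|\Lambda\cap B(x,R)|$ via the minimum of $|\Lambda|$ and the polynomial bound from Lemma~\ref{lem.densite.1}, and sum around the crossover scale $R_\ast=(|\Lambda|/\rho)^{1/d}$. The only cosmetic difference is that the paper uses linear annuli $B(x,(k+1)r)\setminus B(x,kr)$ together with an Abel summation step, whereas you use dyadic annuli; both lead to the same estimate.
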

\begin{proof}
Set $\mathcal S_k = B(x,(k+1)r)\setminus B(x,kr)$, for $k\ge 1$. By Lemma~\ref{lem.densite.1}, one has for some constant $C>0$, 
\begin{align*}
 g_\beta(x,\Lambda\cap B(x,r)^c) & \le \sum_{k\ge 1} g_\beta(x,\Lambda \cap \mathcal S_k) 
\le \sum_{k\ge 1} \frac{1}{(kr)^\beta} \cdot |\Lambda\cap \mathcal S_k| \\
& \le C r^{-\beta} \sum_{k\ge 1} \frac {|\Lambda \cap B(x,kr)|}{k^{\beta+1}} \le Cr^{-\beta} \sum_{k\ge 1} \frac{\min(\rho (kr)^d, |\Lambda|)}{k^{\beta+1}}\\
& \le C\cdot \rho^{\beta/d} \cdot |\Lambda|^{1- \beta/d}. 
\end{align*}
\end{proof}

\subsection{Proof of Theorem~\ref{LD.intersection.finite}} \label{subsec.proofmain} 
First notice that for any $r>1$, 
$$\mathbb P(\widetilde {\mathcal C}_0\cap \mathcal C_0 \cap B(0,r)^c \neq \emptyset) 
\le \sum_{x \in B(0,r)^c} \tau(0,x)^2 
\le C\cdot \sum_{x \in B(0,r)^c} g_\alpha(0,x)^2 
\le C\cdot r^{d-2\alpha},$$
where $C$ is positive constant. 
If we tale $r$ equal to $R_t=\exp(t^{\frac{2\alpha}d - 1})$, the probability above is negligible. 
Thus, if we define 
$$\mathcal C_t(0) = \mathcal C_0\cap B(0,R_t),$$ 
it just remains to prove that for some positive constants $c$ and $C$, 
\begin{equation}\label{goal.intersection}
\mathbb P\Big(\sup_{x\in V}\ g_{2\alpha -d}(x,\mathcal C_t(0)) > C \cdot t^{2- \frac{2\alpha}{d}}\Big) 
\le C \cdot \exp(-c\cdot t^{\frac{2\alpha}{d}-1}). 
\end{equation}
Indeed, once this is proved, then Theorem~\ref{LD.intersection.finite} follows from Corollary~\ref{cor.1} applied to the cluster $\widetilde {\mathcal C}_0$ and $A = \mathcal C_t(0)$.

To prove~\eqref{goal.intersection} we introduce a sequence of scales and densities, linked by the constraint~\eqref{cond.rrho}, which they should satisfy. For $i\ge 0$, set $\rho_i = 2^{-i}$, and define $r_i$ by
$$\rho_i\cdot  r_i^{2\alpha - d} = C_0 \cdot t^{\tfrac{2\alpha}{d}-1},$$
with $C_0$ as in~\eqref{cond.rrho}. Let also 
\begin{equation}\label{setsLambdai}
\Lambda_i = \mathcal C_0^{R_t,*}(r_i,\rho_i)\setminus \left(\bigcup_{0\le j <i} \mathcal C_0^{R_t,*}(r_j,\rho_j)\right), \quad \textrm{and}\quad \Lambda_i^* = \mathcal C_t(0)\setminus \left(\bigcup_{0\le j \le i} \mathcal C_0^{R_t,*}(r_j,\rho_j)\right). 
\end{equation}
By Corollary~\ref{cor.COR}, one has for all $i\ge 0$, 
$$\mathbb P\big(|\Lambda_i|>\rho_i^{\frac{2\alpha - 2d}{2\alpha -d}} \cdot t\big)\le C\cdot \exp\big(-\kappa  \cdot t^{\frac {2\alpha} d - 1}\big). $$ 
Moreover, as soon as $\rho_i r_i^d>c_2R_t^d$, with $c_2$ as in~\eqref{poly}, then $\Lambda_i$ is empty, in particular this holds when $r_i^{2d-2\alpha} > c_2R_t^d/C_0$. Thus for $i>C\cdot \log R_t$, and $C$ some large enough constant, one has $\Lambda_i=\emptyset$. As a consequence, denoting by $\mathcal E$ the event 
\begin{equation}\label{def.E}
\mathcal E = \bigcap_{i\ge 0} \Big\{|\Lambda_i|\le \rho_i^{\frac{2\alpha - 2d}{2\alpha -d}}\cdot t\Big\}, 
\end{equation}
one has by a union bound, and for possibly smaller $\kappa$ and larger $C$, 
$$\mathbb P(\mathcal E^c) \le C\cdot \exp\big(-\kappa  \cdot t^{\frac {2\alpha} d - 1}\big). $$
On the other hand, we claim that on the event $\mathcal E$, one has 
\begin{equation}\label{claim.gCt0}
\sup_{x\in V}\ g_{2\alpha -d}(x,\mathcal C_t(0)) \le  C \cdot t^{2- \frac{2\alpha}{d}},
\end{equation}
which would conclude the proof. So let us prove this claim. Fix $x\in V$, and for $k\ge 1$, write $\mathcal S_k = B(x,r_k) \setminus B(x,r_{k-1})$. Let also $\mathcal S_0 = B(x,r_0)$, so that these sets $(\mathcal S_k)_{k\ge 0}$ form a partition of $V$. Note that, for some constant $C>0$, that might change from line to line,  
$$g_{2\alpha-d}(x,\mathcal C_t(0) \cap \mathcal S_0)\le g_{2\alpha-d}(x,\mathcal S_0)
\le C r_0^{2d-2\alpha} \le C \cdot t^{2-\frac{2\alpha}{d}}.$$ 
Furthermore, for any $k\ge 1$, 
$$g_{2\alpha-d}(x,\mathcal C_t(0) \cap \mathcal S_k)= \sum_{i=0}^k g_{2\alpha-d}(x,\Lambda_i \cap \mathcal S_k) + g_{2\alpha-d}(x,\Lambda_k^* \cap \mathcal S_k).$$
Letting $\mathcal S_i^* = \cup_{k \ge i} \mathcal S_k$, and summing over $k\ge 1$, we get 
\begin{equation}\label{sumk} 
\sum_{k\ge 1} 
g_{2\alpha-d}(x,\mathcal C_t(0) \cap \mathcal S_k) = g_{2\alpha-d}(x,\Lambda_0 \cap \mathcal S_1^*) + \sum_{i=1}^\infty  g_{2\alpha-d}(x,\Lambda_i \cap \mathcal S_i^*) + \sum_{k\ge 1} g_{2\alpha-d}(x,\Lambda_k^* \cap \mathcal S_k).
\end{equation}
Concerning the second sum, note that for any $k\ge 1$, 
$$g_{2\alpha-d}(x,\Lambda_k^* \cap \mathcal S_k)\le \frac{|\Lambda_k^* \cap \mathcal S_k|}{r_{k-1}^{2\alpha - d}} \le C\cdot \frac{\rho_k \cdot r_k^d}{r_{k-1}^{2\alpha-d}} \le C\cdot \frac{t^{\frac{2\alpha}{d}-1}}{r_k^{4\alpha - 3d}} .$$ 
Hence, since $\alpha>3d/4$ by hypothesis, 
$$\sum_{k\ge 1}g_{2\alpha-d}(x,\Lambda_k^* \cap \mathcal S_k)\le C\cdot  \frac{t^{\frac{2\alpha}{d}-1}}{r_0^{4\alpha- 3d}} \le C\cdot t^{\frac{2\alpha}{d}-1 - \frac{4\alpha}{d} + 3}= C\cdot t^{2- \frac{2\alpha}{d}} .$$ 
As for the first sum in~\eqref{sumk}, Lemma~\ref{lem.densite} shows that for any $i\ge 1$, on the event $\mathcal E$, 
$$g_{2\alpha-d}(x,\Lambda_i \cap \mathcal S_i^*) \le 
C \rho_i^{\frac{2\alpha}{d}-1} \cdot |\Lambda_i|^{2-\frac{2\alpha}{d}} \le C  t^{2- \frac{2\alpha}{d}}\cdot \rho_i^{\frac{2\alpha}{d}- 1 - \frac{(2\alpha- 2d)^2}{d(2\alpha -d)}}\le C\cdot t^{2- \frac{2\alpha}{d}}\cdot \rho_i^{ \frac{4\alpha +5d}{2\alpha-d}}.$$
Summing now over $i\ge 1$, yields 
$$\sum_{i\ge 1}g_{2\alpha-d}(x,\Lambda_i \cap \mathcal S_i^*) \le C\cdot t^{2-\frac{2\alpha}{d}}. $$
Finally, on the event $\mathcal E$, 
$$g_{2\alpha-d}(x,\Lambda_0 \cap \mathcal S_1^*)
\le \frac{|\Lambda_0|}{r_0^{2\alpha - d}} \le 
\frac{t}{r_0^{2\alpha-d}}\le C \cdot t^{2-\frac{2\alpha}{d}},$$
which altogether proves~\eqref{claim.gCt0} and concludes the proof of Theorem~\ref{LD.intersection.finite}. 
\hfill $\square$

\begin{remark}\label{rem.alphabeta}
\emph{Note that the same argument can be used to  show~\eqref{intersection.twodifferent} concerning the case of clusters of different types. More precisely, assume that $\mathcal G$ and $\widetilde{\mathcal G}$ satisfy respectively~\eqref{Halpha} and $(\mathcal H_\beta)$, for some $\alpha,\beta\in(d/2,d)$, not necessarily equal, with say $\alpha\le \beta$. Then one can define the same sets $\Lambda_i$ for $\mathcal C_0$ as above, let $R_t = \exp(t^\gamma)$, with $\gamma$ as in~\eqref{intersection.twodifferent}, and consider $$\mathcal E = \big\{|\Lambda_i|\le \rho_i^{\frac{2\alpha - 2d}{2\alpha -d}}\cdot t^{\frac{\gamma d}{2\alpha -d}}, \ \forall i\big\}.$$ 
One can check first that $\mathbb P(\mathcal E^c) \le C \exp(-t^\gamma)$, and secondly that on $\mathcal E$, and under the additional assumption that $2\alpha + 2\beta >3d$, one has $\sup_{x\in V} g_{2\beta-d}(x,\mathcal C_t(0))\le Ct^{1-\gamma}$,  using exactly the same argument as above, and this proves~\eqref{intersection.twodifferent}. 
We note that the condition $2\alpha + 2\beta>3d$, should also correspond to the upper critical dimensions in general, in the sense that if 
$2\alpha + 2\beta<3d$, the tail should be polynomial. In the critical case $2\alpha + 2\beta = 3d$, the tail should be stretched exponential, but with an exponent that is unclear. }
\end{remark}

\section{Proof of Theorem~\ref{thm.IIC}} \label{sec.IIC}
Let us start with the proof of~\eqref{IIC.1}. For $p\in [0,1]$, let
$\mathbb P_p$ denotes the law of Bernoulli bond percolation on $\mathbb Z^d$, and let 
$\tau_p(x,y) = \mathbb P_p(x\longleftrightarrow y)$. Let also $p_c$ be the critical value for percolation. It is proved in~\cite{HvdHH}, that for any local functional $F$, 
$$
\lim_{\|z\|\to +\infty} \frac 1{\tau_{p_c}(0,z)}\, \mathbb E_{p_c}[F\cdot \mathbf 1_{\{0\longleftrightarrow z\}}] = \mathbb E_{\textrm{IIC}}[F],
$$
where $\mathbb E_{\textrm{IIC}}$ denotes expectation with respect to the IIC. 
Let now $\varphi:\mathbb Z^d\to [0,\infty)$, be such that $\|g_{d-4}*\varphi\|_\infty\le 1$. Also for $R>0$, let  $x\stackrel{B(0,R)}{\longleftrightarrow} 0$,
be the event that $x$ and $0$ are connected in $B(0,R)$ (in particular this requires $x\in B(0,R)$).
Then $\sum_{x\in \mathbb Z^d} \mathbf 1\{x\stackrel{B(0,R)}{\longleftrightarrow} 0\}\cdot  \varphi(x)$ is a local functional, and thus for any fixed $R>0$, and $\kappa>0$,
\begin{equation*}
\lim_{\|z\|\to +\infty} \frac 1{\tau_{p_c}(0,z)}\mathbb E_{p_c}\left[\exp\Big(\kappa \sum_{x\stackrel{B(0,R)}{\longleftrightarrow} 0} \varphi(x)\Big)\cdot \mathbf 1\{0\longleftrightarrow z\} \right] = \mathbb E_{\textrm{IIC}}\Big[\exp\Big(\kappa \sum_{x\stackrel{B(0,R)}{\longleftrightarrow} 0} \varphi_R(x)\Big)\Big]. 
\end{equation*}
We will show that there exist positive constants $C$ and $\kappa$, such that for any $R>0$, 
\begin{equation}\label{IIC.goal}
\limsup_{\|z\|\to +\infty} \frac 1{\tau_{p_c}(0,z)}\mathbb E_{p_c}\left[\exp\Big(\kappa \sum_{x\stackrel{B(0,R)}{\longleftrightarrow} 0} \varphi(x)\Big)\cdot \mathbf 1\{0\longleftrightarrow z\} \right] \le C\cdot \|\varphi\|_1,
\end{equation}
which will conclude the proof of~\eqref{IIC.1}, since by monotone convergence one has 
$$\lim_{R\to \infty} \mathbb E_{\textrm{IIC}}\Big[\exp\Big(\kappa \sum_{x\stackrel{B(0,R)}{\longleftrightarrow} 0} \varphi(x)\Big)\Big] = \mathbb E\Big[\exp\Big(\kappa \sum_{x\in \mathcal C_\infty} \varphi(x)\Big)\Big].$$
To prove~\eqref{IIC.goal}, notice that by~\eqref{induction}, one has for any $n\ge 1$, and any $z\in \mathbb Z^d$, with $ \varphi_R(x) = \varphi(x)\cdot  \mathbf 1_{\{\|x\|\le R\}}$, 
\begin{align*}\mathbb E_{p_c}\left[\Big(\sum_{x\stackrel{B(0,R)}{\longleftrightarrow} 0} \varphi(x) \Big)^n \mathbf 1\{0\longleftrightarrow z\}\right] 
& \le  \mathbb E_{p_c}\left[\Big(\sum_{x\in \mathcal C_0} \varphi_R(x) \Big)^n \mathbf 1\{0\longleftrightarrow z\} \right]\\
& \le \mathbb E_{p_c}\left[\Big(\sum_{x\in \mathcal C(z)} \varphi_R(x) \Big)^n  \right]\\
& \le C^{n-1} \cdot (1\vee (n-2)!)\cdot g_{d-2}*\varphi_R(z). 
\end{align*}
Note now that for any $z$ with $\|z\|\ge 2 R$, one has for some constant $C_0>0$,  
$$g_{d-2} * \varphi_R(z) \le 2^{d-2}\,  g_{d-2}(0,z) \cdot \|\varphi_R\|_1 \le C_0\cdot  \tau_{p_c}(0,z)\cdot \|\varphi\|_1,$$
using the results of~\cite{FvdH17} on the two-point function for the last inequality. 
Then \eqref{IIC.goal} follows, and this concludes the proof of~\eqref{IIC.1}.

Now with~\eqref{IIC.1} at hand, one can follow the same argument as for the proof of Theorem~\ref{LD.intersection.finite}, to deduce~\eqref{IIC.2}. Indeed, the only place where~\eqref{IIC.1} is used is in the proof of Lemma~\ref{lem.unionballs}, and there one can use that for $\varphi$ realizing the maximum in the definition of $\widetilde{\rm Cap}_{d-4}(B(A,r))$, one has by definition $\|\varphi \|_1 = \widetilde {\rm Cap}_{d-4}(B(A,r))$, which can be absorbed in the exponential bound, at the cost of an additional factor $1/\rho$ in front. As a consequence, one can deduce an analogue of Corollary~\ref{cor.COR} with a factor $1/\rho$ in front of the exponential in the upper bound. More precisely we obtain, with hopefully obvious notation, that for any $r,R\ge 1$ and $\rho>0$ satisfying~\eqref{cond.rrho}, and any $L\ge 1$, 
$$\mathbb P(|\mathcal C_\infty^{R,*}(r,\rho)|>L) \le \frac{C}{\rho} \cdot \exp(-\kappa \rho^{4/d} \cdot L^{1- 4/d}),$$
for some positive constants $C$, $C_0$ and $\kappa$. Then the rest of the proof can be followed word by word, simply taking now $R_t = \exp(\varepsilon t^{1-4/d})$, with $\varepsilon$ a small enough constant, so that the number of sets $\Lambda_i$ to be considered is not too large. This concludes the proof of~\eqref{IIC.2}. \hfill $\square$


\section{Proof of Theorem~\ref{theo.generalcluster}}\label{sec.manyclusters} 
We prove here Theorem~\ref{theo.generalcluster} concerning the extension of our main result to the
intersection of more than two clusters. For this we prove first some general upper bound on the $n$-point function, see Proposition~\ref{prop.npoint} below, which is a key  result of the paper. As  mentioned in the introduction, a similar result could be deduced from the tree-graph inequality of Aizenman and Newman~\cite{AN84}, after summing the space-variables associated to the so-called {\it internal vertices}.  However, we take here a different route, more suited to our applications, and to more general settings such as branching random walks. 
To state the result we need additional definition.

\bigskip 
We consider here finite plane trees, and put a mark on vertices which have either zero or one child. For $n\ge 1$, we denote by $\mathbb U_n$ the set of finite plane trees,  which have $n$ marked vertices. Then denote by $ \mathbb T_n$ the set of trees in $\mathbb U_n$, whose marked vertices are further endowed with a label in $\{1,\dots,n\}$, i.e. a one-to-one map $\varphi$ from $\{1,\dots,n\}$ to the set of marked vertices of the tree. Note that once such $\varphi$ is given, all other labelling are obtained by composing it with an element of $\mathfrak S_n$ the group of permutations of $\{1,\dots,n\}$. In particular $\mathbb T_n$ is in bijection with $\mathbb U_n\times \mathfrak S_n$. 

\bigskip 
We denote by $\mathbb T_n^\bullet$ the set of trees in~$\mathbb T_n$, whose root is a marked vertex, and let $\mathbb T_n^\circ = \mathbb T_n\setminus \mathbb T^\bullet_n $. 
Note that the root of any $\mathfrak t\in \mathbb T_n^\circ$ has $L\ge 2$ children, which are the roots of subtrees $\mathfrak t_1,\dots,\mathfrak t_L$, all belonging to some $\mathbb T_k$, (after some trivial relabelling) for some $k\le n-1$: indeed all marked vertices cannot be in the same subtree, as on one hand $\mathfrak t$ is finite and on the other hand all its leaves are marked. 
Moreover, given $\mathfrak t\in \mathbb T_n$, we define $x_{\mathfrak t}$ as being equal to $x_i$, if $i$ is the label of the first marked vertex of $\mathfrak t$ 
in the lexicographical order. 

\vspace{0.2cm}
We now consider the set $\mathbb T_n^*$ of unordered rooted trees with $n$ vertices with distinct labels in $\{1,\dots,n\}$. We then define  inductively a map $\pi: \mathbb T_n \to \mathbb T_n^*$, as follows. 
\begin{itemize}
\item If $\mathfrak t\in \mathbb T_1$, then necessarily $\mathfrak t$ has only one vertex with label $1$. In this case $\pi(\mathfrak t) = \mathfrak t$. Assume now that $\pi(\mathfrak t)$ has been defined for all $\mathfrak t\in \mathbb T_k$, with $k\le n-1$, and 
consider $\mathfrak t\in \mathbb T_n$, with $n\ge 2$. 
\item If $\mathfrak t \in \mathbb T_n^\bullet$, i.e. if the root of $\mathfrak t$ is a marked vertex, then by definition it has only one child, which is the root of another tree $\mathfrak t'$. Then define $\pi(\mathfrak t)$ as being the tree whose root has the same label as the root of $\mathfrak t$, and also only one child, say $v$, to which we attach  $\pi(\mathfrak t')$ (identifying the root of $\pi(\mathfrak t')$ with $v$). Since $\mathfrak t' \in \mathbb T_{n-1}$, this is well defined by the induction hypothesis. 
\item 
If $\mathfrak t\in \mathbb T_n^\circ$, 
$\pi(\mathfrak t)$ is obtained by considering the $L$ children of the root $v_1,\dots,v_L$, together with the trees $\mathfrak t_1,\dots,\mathfrak t_L$ emanating from these vertices. We join $v_i$ and $v_{i+1}$ by an edge for 
all $i\le L-1$, and for $i\le L$, further 
attach to $v_i$ the tree $\pi(\mathfrak t_i)$ (identifying the root of $\pi(\mathfrak t_i)$ with $v_i$). Finally we declare 
$v_1$ to be the root of $\pi(\mathfrak t)$.  
\end{itemize}
Note that in our construction, for any $\mathfrak t\in \mathbb T_n$, the root of $\pi(\mathfrak t)$ has the same label as the first marked vertex of $\mathfrak t$. 
We are now in position to define precisely the function $G_\alpha$, which has been  already briefly introduced in~\eqref{npoint}. First, given $\mathfrak t\in \mathbb T_n^*$, we let $\mathcal E(\mathfrak t)$ denote its set of edges, and we identify the vertices with their label. In particular $\{i,j\} \in \mathcal E(\mathfrak t)$ if the vertices with label $i$ and $j$ are joined by an edge. Then given $\underline x=(x_1,\dots,x_n)$, and $\mathfrak t \in \mathbb T_n$, we let 
\begin{equation}\label{def.Galpha}
G_\alpha(\mathfrak t,\underline x) = \prod_{\{i,j\}\in \mathcal E(\pi(\mathfrak t))} g_{2\alpha- d} (x_i,x_j),
\end{equation}
with the convention that if $\mathfrak t$ has only one vertex, then $G_\alpha(\mathfrak t,x_1) = 1$. Note that the only important aspect of $\pi(\mathfrak t)$ in  the definition above is its tree structure, which is why we deal with unordered trees. 
We can now state our main bound on the $n$-point function.

\begin{proposition}\label{prop.npoint}
Assume that $\mathcal G$ satisfies the BK inequality and \eqref{Halpha}, for some $\alpha \in (d/2,d)$. There exists a constant $C>0$, such that for any $x_1,\dots,x_n,z\in V$, one has with $\underline x=(x_1,\dots,x_n)$, 
$$\mathbb P(x_1,\dots,x_n \in \mathcal C(z)) \le C^n \sum_{\mathfrak t\in \mathbb T_n} G_\alpha (\mathfrak t,\underline x) \cdot g_\alpha (x_{\mathfrak t},z). $$ 
\end{proposition}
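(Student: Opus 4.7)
The plan is to establish the bound by induction on $n$. The base case $n=1$ is immediate: $\mathbb T_1$ reduces to the single tree with one marked vertex labelled $1$, so $G_\alpha \equiv 1$ and $x_{\mathfrak t}=x_1$, and $\mathbb P(x_1 \in \mathcal C(z)) = \tau(z,x_1) \le C g_\alpha(x_1,z)$ by \eqref{Halpha}.

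For the induction step, I would decompose the event $\{x_1, \ldots, x_n \in \mathcal C(z)\}$ following the same strategy as in the proof of Theorem~\ref{thm.expmoment}: look at the ``pivotal'' vertex $M$ closest to $z$ that disconnects it from the remaining $x_j$'s. Either $M$ coincides with some $x_i$, which leads via BK to a self-avoiding path $z \longleftrightarrow x_i$ occurring disjointly from the connections $x_i \longleftrightarrow x_j$ for $j\neq i$; or $M$ is a new vertex $y$, and then the points $x_j$ split into $L \ge 2$ nonempty groups $I_1,\dots,I_L$ that are connected to $y$ disjointly, and further disjointly from a path $z \longleftrightarrow y$. The BK inequality produces in both cases a product bound.

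In the first scenario, a union bound over $i$ combined with the induction hypothesis applied with starting point $x_i$ and remaining vertices $(x_j)_{j\neq i}$ produces precisely the contribution of trees $\mathfrak t \in \mathbb T_n^\bullet$ whose marked root carries label $i$: the trunk factor $\tau(z,x_i)\le C g_\alpha(x_i,z)$ matches $g_\alpha(x_{\mathfrak t},z)$, while the induction delivers a child subtree $\mathfrak t' \in \mathbb T_{n-1}$ together with a factor $g_\alpha(x_{\mathfrak t'}, x_i) \le g_{2\alpha - d}(x_{\mathfrak t'}, x_i)$ (since $2\alpha - d \le \alpha$), which supplies the missing edge of $\pi(\mathfrak t)$ between label $i$ and the first marked label of $\mathfrak t'$.

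The second scenario is the heart of the argument. Applying BK and the induction hypothesis to each group $I_\ell$ yields subtrees $\mathfrak t_\ell \in \mathbb T_{|I_\ell|}$ (with labels in $I_\ell$), and reduces the problem to estimating
\begin{equation*}
\sum_{y \in V} \tau(z,y) \prod_{\ell=1}^{L} g_\alpha(x_{\mathfrak t_\ell}, y).
\end{equation*}
After bounding $\tau$ via \eqref{Halpha}, this is precisely the input of Lemma~\ref{lem.npoint}, which bounds the sum by $C^L \sum_{\sigma \in \mathfrak S_L} g_\alpha(x_{\mathfrak t_{\sigma(1)}}, z) \prod_{\ell=1}^{L-1} g_{2\alpha - d}(x_{\mathfrak t_{\sigma(\ell)}}, x_{\mathfrak t_{\sigma(\ell+1)}})$. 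For each permutation $\sigma$, this is exactly the contribution of the tree $\mathfrak t \in \mathbb T_n^\circ$ whose root has $L$ ordered children $v_1, \dots, v_L$ carrying (after grafting) the subtrees $\mathfrak t_{\sigma(1)}, \dots, \mathfrak t_{\sigma(L)}$: indeed, by definition the map $\pi$ joins these $v_\ell$'s into a chain producing exactly the $g_{2\alpha - d}$ factors, and declares $v_1$ to be the root of $\pi(\mathfrak t)$, so that its label is $x_{\mathfrak t_{\sigma(1)}}=x_{\mathfrak t}$.

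The main obstacle will be the combinatorial bookkeeping. One must organize the BK step so that partitions are taken unordered (or equivalently, divide by $L!$ to cancel the canonical overcount), and then verify that the pairing ``unordered partition plus $\sigma$'' is a bijection with $\mathbb T_n^\circ$: for each unordered choice of subtrees, the $L!$ permutations produced by Lemma~\ref{lem.npoint} correspond exactly to the $L!$ plane orderings of the children of the root of a tree in $\mathbb T_n^\circ$. Constant tracking is straightforward: each induction step on a group of size $|I_\ell|$ contributes $C^{|I_\ell|}$, and $\prod_\ell C^{|I_\ell|} \cdot C^L \le C^{2n}$ since $L \le n$, so the bound closes after enlarging $C$. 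Finally, the pivotal nature of $M$ guarantees that the two scenarios together exhaust the event, so that summing the two contributions yields exactly a sum over $\mathbb T_n = \mathbb T_n^\bullet \sqcup \mathbb T_n^\circ$.
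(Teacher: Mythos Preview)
Your proposal is correct and follows essentially the same approach as the paper's proof: induction on $n$, splitting via the disconnecting vertex $M(z;x_1,\dots,x_n)$ into the case $M\in\{x_1,\dots,x_n\}$ (producing the sum over $\mathbb T_n^\bullet$) and the case $M\notin\{x_1,\dots,x_n\}$ (producing the sum over $\mathbb T_n^\circ$ after applying Lemma~\ref{lem.npoint} to integrate out the new vertex $y$). The combinatorial bookkeeping you anticipate is handled in the paper by asserting that $(L,I_1,\dots,I_L,\mathfrak t_1,\dots,\mathfrak t_L,\sigma)\mapsto\mathfrak t\in\mathbb T_n^\circ$ is a bijection, which is precisely the pairing ``unordered partition plus $\sigma$'' you describe.
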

\begin{proof}
The proof goes by induction on $n$. If $n=1$, the result directly follows from the hypothesis~\eqref{Halpha}, since $\mathbb T_1$ is reduced to the unique tree $\mathfrak t$ with only one vertex labelled $1$, and in this case $G_\alpha(\mathfrak t,x_1) = 1$ by definition. Assume now that the result holds true for any $k\le n-1$, and consider $x_1,\dots,x_n,z \in V$. Following the same argument (and notation) as in the proof of Theorem~\ref{thm.expmoment}, one has 
$$\mathbb P(x_1,\dots,x_n \in \mathcal C(z), M(z;x_1,\dots,x_n) \in \{x_1,\dots,x_n\}) \le C\sum_{i=1}^n g_\alpha(x_i,z) \cdot \mathbb P(x_j \in \mathcal C(x_i), \ \forall j\neq i). $$ 
Then the induction hypothesis yields 
\begin{align*}
\mathbb P(x_1,\dots,x_n \in \mathcal C(z), M(z;x_1,\dots,x_n) & \in \{x_1,\dots,x_n\}) \\
& \le C^n\sum_{i=1}^n g_\alpha(x_i,z) \sum_{\mathfrak t\in \mathbb T_{n-1}}
G_\alpha(\mathfrak t,(x_j)_{j\neq i})\cdot g_\alpha(x_{\mathfrak t},x_i). 
\end{align*}
Now as we already saw, given $\mathfrak t\in \mathbb T_{n-1}$, and $i\in \{1,\dots,n\}$, one can define a new tree $\mathfrak t' \in \mathbb T_n^\bullet$, by considering a root with label $i$ and a unique child to which we attach the tree $\mathfrak t$ (with labels larger than or equal to $i$ increased by one unit). Note that this induces a bijection between $\{1,\dots,n\}\times \mathbb T_{n-1}$ and $\mathbb T_n^\bullet$. 
Bounding also the term $g_\alpha(x_{\mathfrak t},x_i)$ by $g_{2\alpha-d}(x_\mathfrak t,x_i)$, and remembering the inductive definition of $G_\alpha$, we readily get
\begin{equation}\label{eq.propnpoint1}
\mathbb P(x_1,\dots,x_n \in \mathcal C(z), M(z;x_1,\dots,x_n) \in \{x_1,\dots,x_n\}) \le
C^n \sum_{\mathfrak t \in \mathbb T_n^\bullet} G_\alpha(\mathfrak t, \underline x) \cdot g_\alpha(x_{\mathfrak t},z). 
\end{equation}
On the other hand, using again the induction hypothesis, 
we get 
\begin{align*}
 \mathbb P(x_1,\dots,x_n \in \mathcal C(z), & M(z;x_1,\dots,x_n)  \notin \{x_1,\dots,x_n\}) \\
 & \le C \sum_{L=2}^n \sum_{y\in V} \sum_{I_1,\dots,I_L} g_\alpha(z,y) \prod_{\ell = 1}^L \mathbb P\big((x_i)_{i\in I_\ell} \in \mathcal C(y)\big) \\
& \le C^n \sum_{L=2}^n \sum_{y\in V} \sum_{I_1,\dots,I_L} \sum_{\mathfrak t_1,\dots,\mathfrak t_\ell} g_\alpha(z,y) \prod_{\ell = 1}^L G_\alpha(\mathfrak t_\ell, (x_i)_{i\in I_\ell}) \cdot g_\alpha(x_{\mathfrak t_\ell},y), 
\end{align*}
where the sum over $I_1,\dots,I_L$, is over partitions of $\{1,\dots,n\}$, and the last sum is over trees $\mathfrak t_\ell \in \mathbb T_{|I_\ell|}$, for $1 \le \ell \le L$. Then applying Lemma~\ref{lem.npoint} gives 
\begin{align*}
&  \mathbb P(x_1,\dots,x_n \in \mathcal C(z), M(z;x_1,\dots,x_n)  \notin \{x_1,\dots,x_n\}) \\
&  \le C^n \sum_{L=2}^n  \sum_{I_1,\dots,I_L} \sum_{\mathfrak t_1,\dots,\mathfrak t_\ell}\sum_{\sigma \in \mathfrak S_L} g_\alpha(z,x_{\mathfrak t_{\sigma(1)}})\cdot \Big(\prod_{\ell = 1}^{L-1} g_{2\alpha-d}(x_{\mathfrak t_{\sigma(\ell)}},x_{\mathfrak t_{\sigma(\ell+1)}})\Big)\cdot  \prod_{\ell = 1}^L G_\alpha(\mathfrak t_\ell, (x_i)_{i\in I_\ell}). 
\end{align*}
Now observe that given any $L\ge 2$, any partition $I_1,\dots,I_L$ of $\{1,\dots,n\}$, any trees $\mathfrak t_1\in \mathbb T_{|I_1|}, \dots,
\mathfrak t_L \in \mathbb T_{|I_L|}$, and any $\sigma\in \mathfrak S_L$, one can construct a tree $\mathfrak t\in \mathbb T_n^\circ$, by considering an unmarked root with $L$ children, and then for each $1\le \ell\le L$,  attach to the $\ell$-th child the tree $\mathfrak t_{\sigma(\ell)}$ (where for each $k\le |I_\ell|$, we replace the label $k$ by the $k$-th smallest integer from $I_\ell$). Note that this construction induces a bijection between $\mathbb T_n^\circ$ and the set of elements  $(L,I_1,\dots,I_L,\mathfrak t_1,\dots,\mathfrak t_L,\sigma)$ as before. Remembering also the inductive definition of $G_\alpha$, this readily proves that 
\begin{equation}\label{eq.propnpoint2}
\mathbb P(x_1,\dots,x_n \in \mathcal C(z), M(z;x_1,\dots,x_n)  \notin \{x_1,\dots,x_n\})\le C^n \sum_{\mathfrak t\in \mathbb T_n^\circ} G_\alpha(\mathfrak t,\underline x)\cdot g_\alpha(x_{\mathfrak t},z). \end{equation}
Combining~\eqref{eq.propnpoint1} and~\eqref{eq.propnpoint2} concludes the proof of the induction step, hence of the proposition. 
\end{proof}

We shall also need the following lemma. 

\begin{lemma}\label{lem.cardinalTn}
There exists a constant $C>0$, such that for any $n\ge 1$, $|\mathbb U_n|\le C^n$, and $|\mathbb T_n|\le C^n \, n!$.  
\end{lemma}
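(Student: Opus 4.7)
The plan is to prove the bound on $|\mathbb{U}_n|$ first and then derive the bound on $|\mathbb{T}_n|$ from the bijection between $\mathbb{T}_n$ and $\mathbb{U}_n \times \mathfrak{S}_n$ mentioned right after the definitions.

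The key observation is that a tree in $\mathbb{U}_n$ cannot be too large. Given $\mathfrak{t} \in \mathbb{U}_n$, write $n_0$ and $n_1$ for the number of marked vertices with respectively $0$ and $1$ children (so $n_0 + n_1 = n$), and let $m_k$ denote the number of unmarked vertices, which by definition have $k \ge 2$ children. Setting $m = \sum_{k \ge 2} m_k$, counting edges in two ways (using that the total number of edges equals $n + m - 1$ and also equals $\sum_v \mathrm{children}(v)$) gives
\[
n_0 + n_1 + m - 1 = n_1 + \sum_{k \ge 2} k\, m_k,
\]
so that $n_0 - 1 = \sum_{k \ge 2} (k-1) m_k \ge m$. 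Consequently, the total number of vertices of $\mathfrak{t}$ is at most $n + m \le 2n - 1$.

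Since the marking in a tree of $\mathbb{U}_n$ is determined by its plane-tree structure (a vertex is marked if and only if it has at most one child), $|\mathbb{U}_n|$ is bounded by the total number of finite plane trees with at most $2n - 1$ vertices. The number of plane trees with exactly $N$ vertices is the Catalan number $C_{N-1} \le 4^{N-1}$, hence
\[
|\mathbb{U}_n| \le \sum_{N=1}^{2n-1} C_{N-1} \le \sum_{N=1}^{2n-1} 4^{N-1} \le \tfrac{1}{3}\cdot 16^n.
\]
This gives $|\mathbb{U}_n| \le C^n$ for some absolute constant $C$.

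Finally, a labelling of the marked vertices of a tree in $\mathbb{U}_n$ is just an element of $\mathfrak{S}_n$, so $\mathbb{T}_n$ is in bijection with $\mathbb{U}_n \times \mathfrak{S}_n$, yielding $|\mathbb{T}_n| = |\mathbb{U}_n|\cdot n! \le C^n \cdot n!$. No step is really an obstacle here; the only nontrivial ingredient is the double edge count bounding the number of unmarked vertices by the number of marked leaves minus one, which is what controls the size of the tree.
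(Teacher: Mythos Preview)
Your argument is correct. The edge-count identity yields $m \le n_0 - 1 \le n-1$, so every tree in $\mathbb U_n$ has at most $2n-1$ vertices; since the marks are determined by the underlying plane tree, the map from $\mathbb U_n$ to plane trees is injective, and the Catalan bound $C_{N-1}\le 4^{N-1}$ then gives $|\mathbb U_n|\le 16^n$. The passage to $\mathbb T_n$ via the bijection with $\mathbb U_n\times\mathfrak S_n$ is exactly as in the paper.

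The paper, however, proceeds differently: it proves the stronger estimate $|\mathbb U_n|\le C^{n-1}/n^2$ by induction on $n$, decomposing a tree according to whether its root is marked (in which case $|\mathbb U_{n+1}^\bullet|=|\mathbb U_n|$) or not (in which case one sums over the subtrees attached to the root and uses the combinatorial inequality $\sum_{n_1+\dots+n_L=n+1}\prod_i n_i^{-2}\le c^{L-1}/n^2$). Your approach is more direct and elementary, and the global size bound $|\mathfrak t|\le 2n-1$ is a nice structural observation in its own right. The paper's inductive route, on the other hand, mirrors the recursive definitions of $\pi$ and $G_\alpha$ used throughout Section~\ref{sec.manyclusters}, and produces the extra factor $1/n^2$; that refinement is not needed for the lemma as stated, but the same summation identity \eqref{claim1} is already in play in the proof of Theorem~\ref{thm.expmoment}, so the paper gets it essentially for free.
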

\begin{proof}
Recall that $\mathbb T_n$ is in bijection with $\mathbb U_n\times \mathfrak S_n$, thus we just need to prove the result for $\mathbb U_n$. Let us prove by induction on $n\ge 1$, that $|\mathbb U_n|\le \tfrac{C^{n-1}}{n^2}$, for some constant $C>0$. The result for $n=1$ is immediate, since $\mathbb U_1$ is reduced to a unique tree, with only one root. Assume now that it holds true for all $k\le n$, and let us prove it for $n+1$. Let $\mathbb U_n^\bullet$ be the set of trees in $\mathbb U_n$ where the root is a marked vertex, and $\mathbb U_n^\circ= \mathbb U_n\setminus \mathbb U_n^\bullet$. Recall that $\mathbb U_{n+1}^\bullet$ is in bijection with $\mathbb U_n$. On the other hand, one has using the induction hypothesis and~\eqref{claim1}, 
\begin{align*}
|\mathbb U_{n+1}^\circ|&  = \sum_{L\ge 2} \sum_{\substack{n_1+\dots+ n_L = n+1\\n_i\ge 1\, \forall i}} |\mathbb U_{n_1}|\times \dots\times |\mathbb U_{n_L}| \le C^{n+1-L} \sum_{L\ge 2} \sum_{\substack{n_1+\dots+ n_L = n+1\\n_i\ge 1\, \forall i}} \prod_{i=1}^L \frac 1{n_i^2} \\
& \le \frac{c\, C^{n-1}}{n^2}  \sum_{L\ge 2} (c/C)^{L-2} \le \frac{c'\, C^{n-1}}{n^2}, 
\end{align*}
for some constants $c$ and $c'$, which we can always assume to be smaller than $C/2$. Combining the last inequality with the previous bound on $|\mathbb U_n^\bullet|$ concludes the proof of the induction step, hence of the lemma. 
\end{proof}

We can now prove the following generalization of Theorem~\ref{thm.expmoment}. 

\begin{theorem}\label{theo.generalmoment}
Assume that $\mathcal G$ satisfies BK inequality and~\eqref{Halpha}, for some $\alpha \in (d/2,d)$. Let $k\ge 1$, and $(\mathcal C_0^i)_{1\le i\le k}$, be independent copies of $\mathcal C_0$.  
There exists $\kappa>0$, such that for any $\varphi:\mathbb Z^d\to [0,\infty)$, satisfying $\|g_{k(2\alpha-d)} *\varphi \|_\infty \le 1$, 
$$\mathbb E\left[\exp \Big(\kappa \sum_{x\in \mathcal C_0^1 \cap \dots \cap \mathcal C_0^k} \varphi(x)\Big)^{1/k}\right] \le 2. $$ 
\end{theorem}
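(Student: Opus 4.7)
The plan is to establish the moment bound $\mathbb{E}[S^n]\le D^n(n!)^k$ with $S:=\sum_{y\in \mathcal C_0^1\cap\dots\cap \mathcal C_0^k}\varphi(y)$, and then extract the claimed exponential moment on $S^{1/k}$ by Jensen's inequality and a geometric series. By independence of the $k$ clusters,
\[
\mathbb{E}[S^n]=\sum_{y_1,\dots,y_n\in V}\Big(\prod_{i=1}^n\varphi(y_i)\Big)\,\mathbb{P}(y_1,\dots,y_n\in\mathcal C(0))^k.
\]
Plugging in Proposition~\ref{prop.npoint} and handling the $k$-th power via the power mean inequality $(\sum_{\mathfrak t} a_{\mathfrak t})^k\le |\mathbb T_n|^{k-1}\sum_{\mathfrak t}a_{\mathfrak t}^k$ together with $|\mathbb T_n|\le C^n n!$ from Lemma~\ref{lem.cardinalTn}, one would obtain
\[
\mathbb{E}[S^n]\le C^{kn}(C^n n!)^{k-1}\sum_{\mathfrak t\in \mathbb T_n}\sum_{\underline y}\Big(\prod_i\varphi(y_i)\Big) G_\alpha(\mathfrak t,\underline y)^k\, g_\alpha(y_{\mathfrak t},0)^k.
\]

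The heart of the argument is then a uniform bound of the inner double sum by $1$. Using $g_\beta(x,y)^k\le g_{k\beta}(x,y)$, which follows from the binomial identity $(1+t^\beta)^k\ge 1+t^{k\beta}$, together with $g_{k\alpha}\le g_{k(2\alpha-d)}$ (valid since $\alpha\le d$), the factor $G_\alpha(\mathfrak t,\underline y)^k g_\alpha(y_{\mathfrak t},0)^k$ is bounded by a product of $g_{k(2\alpha-d)}$ factors along the edges of $\pi(\mathfrak t)$ augmented by one extra edge joining the root of $\pi(\mathfrak t)$ (i.e.\ the vertex labelled by the first marked vertex of $\mathfrak t$ in lexicographical order) to the origin. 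Rooting this enlarged tree at $0$, I would then peel leaves one by one: at each step a leaf $j$ contributes a factor $(g_{k(2\alpha-d)}*\varphi)(y_{p(j)})\le 1$ by hypothesis, where $p(j)$ denotes its parent, and the final integration at the root yields $(g_{k(2\alpha-d)}*\varphi)(0)\le 1$. Hence each tree contributes at most $1$ to the inner sum, and summing over $\mathfrak t$ gives $\mathbb{E}[S^n]\le D^n(n!)^k$ for some constant $D=D(k,\alpha,d)$.

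Finally, by concavity of $x\mapsto x^{1/k}$ on $[0,\infty)$, Jensen's inequality yields $\mathbb{E}[S^{n/k}]=\mathbb{E}[(S^n)^{1/k}]\le \mathbb{E}[S^n]^{1/k}\le D^{n/k}n!$, so that
\[
\mathbb{E}[\exp(\kappa S^{1/k})]=\sum_{n\ge 0}\frac{\kappa^n\mathbb{E}[S^{n/k}]}{n!}\le\sum_{n\ge 0}(\kappa D^{1/k})^n\le 2
\]
for $\kappa\le 1/(2D^{1/k})$. The delicate step is the power mean inequality: its prefactor $|\mathbb T_n|^{k-1}\asymp (n!)^{k-1}$ looks wasteful, but combined with the subsequent sum over trees it yields exactly $|\mathbb T_n|^k\asymp(n!)^k$, the matching factor needed to reach the stretched exponent $1/k$ after taking $k$-th roots. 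In particular, the sharp bound $|\mathbb T_n|\le C^n n!$ from Lemma~\ref{lem.cardinalTn}, rather than any superfactorial estimate, is what truly drives the result; any cruder control of the $k$-th power of the $n$-point function would degrade the exponent.
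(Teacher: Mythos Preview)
Your proof is correct and follows essentially the same route as the paper's: expand the $n$-th moment, apply Proposition~\ref{prop.npoint}, use the power-mean inequality together with the bound $|\mathbb T_n|\le C^n n!$ from Lemma~\ref{lem.cardinalTn}, convert $g_{2\alpha-d}^k$ into $g_{k(2\alpha-d)}$, and peel leaves against the hypothesis $\|g_{k(2\alpha-d)}*\varphi\|_\infty\le 1$. You are in fact slightly more careful than the paper in one place: you retain the factor $g_\alpha(y_{\mathfrak t},0)^k$ as an extra edge to the origin before peeling, whereas the paper drops it via $g_\alpha\le 1$. Taken literally, the paper's version would leave a dangling $\sum_y\varphi(y)=\|\varphi\|_1$ after the last peel, which is not controlled; your augmented-tree argument closes this neatly, and you also spell out the concluding Jensen step that the paper leaves implicit.
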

\begin{proof}
Using Proposition~\ref{prop.npoint}, we get for some constant $C>0$, and for any $n\ge 1$, 
\begin{align*} 
\mathbb E\Big[\Big(\sum_{x\in \mathcal C_0^1 \cap \dots \cap \mathcal C_0^k} \varphi(x)\Big)^n\Big]& = 
\sum_{x_1,\dots,x_n} \Big(\prod_{i=1}^n \varphi(x_i)\Big) \cdot \mathbb P(x_1,\dots,x_n\in \mathcal C_0)^k\\
& \le (C^k)^n  \sum_{x_1,\dots,x_n} \Big(\prod_{i=1}^n \varphi(x_i)\Big) \cdot \Big(\sum_{\mathfrak t\in \mathbb T_n} G_\alpha(\mathfrak t,\underline x)\cdot  g_\alpha(x_{\mathfrak t},0)\Big)^k\\
& \le  (C^k)^n |\mathbb T_n|^{k-1}  \sum_{x_1,\dots,x_n} \Big(\prod_{i=1}^n \varphi(x_i)\Big) \cdot \sum_{\mathfrak t\in \mathbb T_n} G_\alpha(\mathfrak t,\underline x)^k \\
& \le (C^{2k})^n (n!)^{k-1}  \sum_{x_1,\dots,x_n} \Big(\prod_{i=1}^n \varphi(x_i)\Big) \cdot \sum_{\mathfrak t\in \mathbb T_n} \prod_{\{i,j\}\in \mathcal E(\pi(\mathfrak t))} g_{k(2\alpha-d)}(x_i,x_j),
\end{align*}
using the definition~\eqref{def.Galpha} of $G_\alpha$ at the last line. Finally using the hypothesis $\|g_{k(2\alpha-d)}*\varphi\|\le 1$ repeatedly for each edge of the tree $\pi(\mathfrak t)$, we deduce that 
$$\mathbb E\Big[\Big(\sum_{x\in \mathcal C_0^1 \cap \dots \cap \mathcal C_0^k} \varphi(x)\Big)^n\Big] 
\le (C^{2k+1})^n\, (n!)^k, $$
and the desired result follows. 
\end{proof}
We then conclude the proof of our main result here. 

\begin{proof}[Proof of Theorem~\ref{theo.generalcluster}]
Assume first that $\alpha \in(\tfrac{(k+1)d}{2k},\tfrac{kd}{2(k-1)})$, for some $k\ge 2$. 
In this case, we can follow a similar argument as in the proof of Theorem~\ref{LD.intersection.finite}, and we keep the same notation. Recall in particular that $R_t=\exp(t^{\frac{2\alpha}{d}-1})$, and $\mathcal C_t(0) = \mathcal C_0 \cap B(0,R_t)$.  
In order to apply  Theorem~\ref{theo.generalmoment}, 
we fix $\mathcal C_t(0)$ and consider $\varphi (x) = \1\{x\in \mathcal C_t(0)\}/\sup_{z\in V} g_{(k-1)(2\alpha-d)}(z,\mathcal C_t(0))$. One then has for any $t>0$,
$$\mathbb P(|\mathcal C_0^1 \cap \dots\cap C_0^{k-1} \cap \mathcal C_t(0)|>t)\le 2\, \mathbb E\left[\exp\Big(- \frac{\kappa\,  t^{\frac 1{k-1}}}{\sup_{z\in V} g_{(k-1)(2\alpha-d)}(z,\mathcal C_t(0))^{\frac 1{k-1}}}\Big)\right]. $$ 
Recall now the definition of the set $\mathcal E$ from~\eqref{def.E}. It can readily be seen,  following the same steps as in the proof of Theorem~\ref{LD.intersection.finite}, 
that on the event $\mathcal E$, when $\alpha\in (\frac{(k+1)d}{2k},\tfrac{kd}{2(k-1)})$, one has 
$$\sup_{z\in V} g_{(k-1)(2\alpha-d)}(z,\mathcal C_t(0)) \le C t^{1-(k-1)(\frac{2\alpha}{d}-1)},$$
and when $\alpha = \tfrac{kd}{2(k-1)}$, for some $k\ge 3$, then 
$$\sup_{z\in V} g_{(k-1)(2\alpha-d)}(z,\mathcal C_t(0)) \le C\log t,$$ 
which altogether conclude the proof of  Theorem~\ref{theo.generalcluster}.
\end{proof}


\section{Proofs of Theorems~\ref{theo.SRW},~\ref{thm.BRW} and~\ref{theo.SRWBRW}}\label{sec.BRW}
We start with the proof of Theorem~\ref{thm.BRW}. There are two parts, one about the upper bounds, which follows similar arguments as for Theorem~\ref{theo.generalcluster} (at least concerning the critical BRWs, some additional care is needed to handle the case of walks indexed by $\mathcal T_\infty$), and one about the lower bounds, which requires some new argument, based on the notion of waves introduced in~\cite{AS24+}. We then conclude this section with a few words explaining the minor changes needed for the proofs of Theorems~\ref{theo.SRW} and~\ref{theo.SRWBRW}. 

\subsection{Proof of Theorem~\ref{thm.BRW}: upper bounds} 
Let us start with the proof in dimension $d\ge 9$, for which it suffices to treat the case of two walks indexed by $\mathcal T_\infty$. All one needs here is an analogue of Theorem~\ref{thm.expmoment} for $\mathcal R_\infty$, which is precisely given in~\cite{ASS23}. Then the rest of the proof is exactly the same as for  Theorem~\ref{LD.intersection.finite}. 

\vspace{0.2cm}
Concerning the fact that the tail distribution for the intersection of critical BRWs is bounded by the tail distribution for the intersection of walks indexed by $\mathcal T_\infty$, this follows from the fact that $\mathcal R_c$ and $\mathcal R_\infty$ can be coupled in such a way that $\mathcal R_c\subset \mathcal R_\infty$. 
This is immediate when $\mathcal T_\infty$ is the invariant infinite tree, since in this case by definition a copy of $\mathcal T_c$ is attached to the root, and in case of Kesten's tree, one can notice that all vertices on the spine produce offspring according to the size biased distribution, which stochastically dominates $\mu$, while the offspring distribution of other vertices is $\mu$, therefore $\mathcal T_\infty$ stochastically dominates $\mathcal T_c$ in this case as well.  

\vspace{0.1cm}
We now move to the lower dimensions, where one needs to consider the intersection of more than two ranges. For this one needs the following result about local times of BRWs,  which might be of independent interest. Recall~\eqref{localtime.def} and the notation introduced in Section~\ref{sec.manyclusters}. We denote by $\mathbb P_z$ the law of a BRW starting from $z$, i.e.~taking value $z$ at the root, and let $\mathbb E_z$ denote the corresponding expectation. Given $\mathfrak t\in \mathbb T_n^\circ$, $\underline x=(x_1,\dots,x_n) \in (\mathbb Z^d)^n$, and $z\in \mathbb Z^d$, we define $\widetilde G(\mathfrak t,\underline x,z)$, as follows. If the root of $\mathfrak t$ has $L\ge 2$ children, to which emanate some trees $\mathfrak t_1,\dots,\mathfrak t_L$, with labels in $I_1,\dots,I_L$, then we let 
$$\widetilde G(\mathfrak t,\underline x,z) 
= f_{L+1}(x_{\mathfrak t_1}, \dots,x_{\mathfrak t_L},z)
\cdot \prod_{\ell = 1}^L G_{d-2}(\mathfrak t_\ell,(x_i)_{i\in I_\ell}),$$ 
where $f_{L+1}$ is the function introduced before Proposition~\ref{prop.Green}.

\begin{proposition}\label{prop.localtimesBRW}
There exists $C>0$, such that for any $n\ge 1$, and any $x_1,\dots,x_n,z \in \mathbb Z^d$ (possibly with repetition), with $d\ge 5$, 
\begin{equation}\label{LT.BRWc}
\mathbb E_z\Big[\prod_{i=1}^n \ell_c(x_i)\Big] \le C^n \sum_{\mathfrak t\in \mathbb T_n} G_{d-2}(\mathfrak t,\underline x)\cdot g_{d-2}(x_{\mathfrak t},z),
\end{equation}
and 
\begin{equation}\label{LT.BRWinfinity}
\mathbb E_z\Big[\prod_{i=1}^n \ell_\infty(x_i)\Big] \le C^n \Big(\sum_{\mathfrak t\in \mathbb T_n^\circ} \widetilde G(\mathfrak t,\underline x,z) + \sum_{\mathfrak t\in \mathbb T_n} G_{d-2}(\mathfrak t,\underline x)\cdot g_{d-4}(x_{\mathfrak t},z)\Big). 
\end{equation}
\end{proposition}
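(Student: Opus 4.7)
\medskip
\noindent\emph{Proof plan.} The strategy is to establish \eqref{LT.BRWc} first, by induction on $n$ mimicking closely the proof of Proposition~\ref{prop.npoint}, and then to derive \eqref{LT.BRWinfinity} from it by decomposing the tree $\mathcal T_\infty$ into its spine and the critical BGW subtrees attached along it.

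For \eqref{LT.BRWc}, the base case $n=1$ reduces to the well-known bound $\mathbb E_z[\ell_c(x_1)] = G_{\rm SRW}(z,x_1) \le C\cdot g_{d-2}(x_1,z)$, valid for $d\ge 3$. For the induction step, I would expand
$$\mathbb E_z\Big[\prod_{i=1}^n \ell_c(x_i)\Big] \;=\; \sum_{v_1,\dots,v_n\in \mathcal T_c} \mathbb P_z\big(S_{v_i}=x_i,\ v_i\in\mathcal T_c,\ \forall i\big),$$
and split according to whether the most recent common ancestor $w$ of $v_1,\dots,v_n$ coincides with some $v_{i_0}$ or not. When $w=v_{i_0}$, the branching property at $v_{i_0}$ decouples the walk from $z$ to $v_{i_0}$ (contributing a factor $G_{\rm SRW}(z,x_{i_0})\le C g_{d-2}(x_{i_0},z)$) from an independent copy of the BRW starting at $x_{i_0}$, to which the induction hypothesis applies with the $n-1$ remaining targets; reassembling through the bijection between $\{1,\dots,n\}\times\mathbb T_{n-1}$ and $\mathbb T_n^\bullet$, and using $g_{d-2}\le g_{d-4}$ to turn the edge from $x_{i_0}$ to $x_{\mathfrak t'}$ into the right factor of $G_{d-2}$, produces the $\mathbb T_n^\bullet$ contribution. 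When $w$ is not any $v_i$, $w$ has $L\ge 2$ children whose subtrees contain the $v_i$'s, which induces a partition $I_1,\dots,I_L$ of $\{1,\dots,n\}$; the exponential moment assumption on $\mu$ gives $\mathbb E[\xi(\xi-1)\cdots(\xi-L+1)]\le C^L L!$, the subtrees are i.i.d.\ copies of $\mathcal T_c$ started one step from $S_w=y$, and the induction hypothesis controls each factor. Summing the resulting product $g_{d-2}(z,y)\cdot \prod_\ell g_{d-2}(x_{\mathfrak t_\ell},y)$ over $y$ via Lemma~\ref{lem.npoint} yields a sum over permutations $\sigma\in\mathfrak S_L$ that matches exactly the definition of $\mathbb T_n^\circ$ given in Section~\ref{sec.manyclusters}, producing the $\mathbb T_n^\circ$ contribution.

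For \eqref{LT.BRWinfinity}, I would use the standard representation of $\mathcal T_\infty$ as a spine $\rho_0,\rho_1,\dots$ (a copy of $\mathbb N$) with independent critical BGW trees attached at each $\rho_k$, so that $(S_{\rho_k})_{k\ge 0}$ is a simple random walk started at $z$. Each $v_i$ with $S_{v_i}=x_i$ belongs either to the spine or to an off-spine subtree rooted at some $\rho_{k_i}$; grouping the indices $i$ by the associated spine vertex $\rho_{k_i}$ gives a partition $\{1,\dots,n\}=J_1\sqcup\cdots\sqcup J_L$ (with $L\ge 1$), where the $J_\ell$'s are associated to distinct spine vertices $\rho_{k_1}<\cdots<\rho_{k_L}$. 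Conditionally on the spine, each off-spine contribution $\mathbb E_{S_{\rho_{k_\ell}}}[\prod_{i\in J_\ell}\ell_c(x_i)]$ is controlled by \eqref{LT.BRWc}, producing a factor of the form $\sum_{\mathfrak t_\ell\in\mathbb T_{|J_\ell|}} G_{d-2}(\mathfrak t_\ell,(x_i)_{i\in J_\ell})\cdot g_{d-2}(x_{\mathfrak t_\ell},S_{\rho_{k_\ell}})$. Taking expectation over the spine then reduces to summing over $y_1,\dots,y_L\in\mathbb Z^d$ (the positions of the $L$ used spine vertices) the quantity $g_{d-2}(z,y_1)\prod_{\ell=1}^{L-1} g_{d-2}(y_\ell,y_{\ell+1})\cdot \prod_\ell g_{d-2}(x_{\mathfrak t_\ell},y_\ell)$, which is exactly the setting of Proposition~\ref{prop.Green}; this yields the sum $\sum_{\mathfrak t\in T_{L+1}} f_{L+1}(x_{\mathfrak t_L},\dots,x_{\mathfrak t_1},z)$. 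When $L\ge 2$ this matches the definition of $\widetilde G$ and produces the first term in \eqref{LT.BRWinfinity}; when $L=1$, there is a single used spine vertex and the extra spine factor collapses, after integration, to a single $g_{d-4}(x_{\mathfrak t},z)$ arising from the convolution of the SRW Green function with itself, producing the second term.

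The main obstacle is Step~2, specifically the bookkeeping needed to identify the combinatorial output of summing over spine positions with the definition of $\widetilde G$ and the set $T_{L+1}$ of rooted trees from Section~\ref{sec.cap}. One has to carefully reconcile the linear ordering along the spine with the unrooted/unlabeled character of $T_{L+1}$, and to verify that the trivial cases (single used spine vertex, single off-spine subtree) correctly produce the $g_{d-4}$ factor in the second sum of \eqref{LT.BRWinfinity} rather than an ill-defined $f_2$ expression. Once this bookkeeping is settled, combining it with the bound already established in Step~1 for each off-spine subtree gives the stated bound.
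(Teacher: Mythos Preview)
Your proposal is correct and follows essentially the same approach as the paper: induction on $n$ for~\eqref{LT.BRWc} via the most-recent-common-ancestor split (exactly mirroring Proposition~\ref{prop.npoint}), and then the spine decomposition of $\mathcal T_\infty$ combined with~\eqref{LT.BRWc} on each attached subtree, followed by Proposition~\ref{prop.Green} to integrate out the spine positions. One small correction to your stated concern: the trees in $T_{L+1}$ are in fact rooted and labeled (see the paragraph before Proposition~\ref{prop.Green}), so the bookkeeping you flag as the main obstacle is lighter than you suggest; the $L=1$ case reduces directly to the convolution $\sum_{y} g_{d-2}(z,y)g_{d-2}(x_{\mathfrak t},y)\le C\,g_{d-4}(x_{\mathfrak t},z)$, which is the base case of Proposition~\ref{prop.Green}.
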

\begin{proof}
We first prove the result for critical BRWs. 
For this we use a similar induction argument as in the proof of Proposition~\ref{prop.npoint}, so let us just mention the minor changes needed here. 
The result for $n=1$ is immediate. Assuming it is true for $k\le n-1$, let now $x_1,\dots,x_n$ be given. Define $\text{MRCA}(v_1,\dots,v_n)$ as  the most recent common ancestor of $v_1,\dots,v_n\in \mathcal T_c$. We write 
\begin{align}\label{eq.MRCA}
\nonumber & \mathbb E_z\Big[\prod_{i=1}^n \ell_c(x_i)\Big]  =\mathbb E\Big[ 
 \sum_{v_1,\dots,v_n\in \mathcal T_c} \1\{S_{v_1}=x_1,\dots,S_{v_n}=x_n\}\Big] \\
\nonumber & = \mathbb E_z\Big[ 
 \sum_{v_1,\dots,v_n\in \mathcal T_c}
 \sum_{v_0\in \mathcal T_c} \1\{S_{v_1}=x_1,\dots,S_{v_n}=x_n, v_0=\text{MRCA}(v_1,\dots,v_n), v_0\in \{v_1,\dots,v_n\}\Big]\\
 & + \mathbb E_z\Big[ 
 \sum_{v_1,\dots,v_n\in \mathcal T_c}
 \sum_{v_0\in \mathcal T_c} \1\{S_{v_1}=x_1,\dots,S_{v_n}=x_n, v_0=\text{MRCA}(v_1,\dots,v_n), v_0\notin \{v_1,\dots,v_n\}\Big]. 
\end{align}
Using the induction hypothesis, one can bound the first term on the right-hand side of~\eqref{eq.MRCA} by 
\begin{align*}
C\sum_{i=1}^n g_{d-2}(x_i,z)\cdot \mathbb E_{x_i}\Big[ \prod_{j\neq i} \ell_c(x_j)\Big] &\le C^n\sum_{i=1}^n g(x_i,z) \sum_{\mathfrak t\in \mathbb T_{n-1}} G_{d-2}(\mathfrak t,(x_j)_{j\neq i})\cdot g_{d-2}(x_{\mathfrak t},x_i) \\
& \le C^n \sum_{\mathfrak t\in \mathbb T_n^\bullet} G_{d-2}(\mathfrak t,\underline x)\cdot g_{d-2}(x_{\mathfrak t},z). 
\end{align*}
Concerning the second term in~\eqref{eq.MRCA}, it is equal to  
\begin{align*}
\sum_{j\ge 2}\mu(j)\sum_{L=2}^{j\wedge n} \binom{j}{L}  \sum_{I_1,\dots,I_L} \sum_{y\in \mathbb Z^d }g(y-z)\prod_{\ell=1}^L \mathbb E_{\nu_y}\Big[\prod_{i\in I_\ell} \ell_c(x_i)\Big],
\end{align*}
where the third sum is over partitions $I_1,\dots,I_L$ of $\{1,\dots,n\}$, with $I_\ell$ nonempty for all $\ell$, and $\nu_y$ is the uniform measure on the neighbors of $y$. Using then that $\mu$ has a finite exponential moment by hypothesis, we get that 
$\sum_{j\ge L} \mu(j)\binom{j}{L} \le c^L$, for some $c>0$. Then exactly as in the proof of Proposition~\ref{prop.npoint} we deduce that the last sum is bounded by 
$$C^n \sum_{\mathfrak t\in \mathbb T_n^\circ} G_{d-2}(\mathfrak t,\underline x)\cdot g_{d-2}(x_{\mathfrak t},z),$$ 
which altogether proves~\eqref{LT.BRWc}.

Now following a similar argument as in Section 4.2 of~\cite{ASS23}, taking advantage of the fact that $\mathcal T_\infty$ is made of a spine to which are attached critical BGW trees (actually adjoint BGW trees, but this is a minor point), one can deduce from~\eqref{LT.BRWc} that for any $x_1,\dots,x_n,z\in \mathbb Z^d$, 
\begin{align}\label{Linfty}
\nonumber \mathbb E_z\Big[\prod_{i=1}^n \ell_\infty(x_i)\Big] & \le C^n 
\sum_{L=1}^n \sum_{(I_1,\dots,I_L)} \sum_{(\mathfrak t_1,\dots,\mathfrak t_L)} \sum_{y_1,\dots,y_L} g_{d-2}(z,y_1)\\
& \qquad \times \Big(\prod_{\ell = 1}^{L-1}g_{d-2}(y_\ell,y_{\ell +1})\Big)\cdot \Big(\prod_{\ell=1}^L g_{d-2}(x_{\mathfrak t_\ell},y_\ell)\Big) \cdot \Big(\prod_{\ell = 1}^L G_{d-2}(\mathfrak t_\ell,(x_i)_{i\in I_\ell})\Big),
\end{align} 
where the second sum is now over ordered partitions $(I_1,\dots,I_L)$ of $\{1,\dots,n\}$, with $I_\ell$ nonempty for all $\ell$, and the third sum is over ordered families of trees $(\mathfrak t_1\in \mathbb T_{|I_1|}, \dots, \mathfrak t_L\in \mathbb T_{|I_L|})$. Then the sum corresponding to $L=1$ is upper bounded by $C\sum_{\mathfrak t\in \mathbb T_n} g_{d-4}(x_{\mathfrak t},z) \cdot G_{d-2}(\mathfrak t,\underline x)$, while the sum over $L\ge 2$ is upper bounded by $C\sum_{\mathfrak t\in \mathbb T_n^\circ} \widetilde G(\mathfrak t,\underline x,z)$, thanks to  Proposition~\ref{prop.Green}, which concludes the proof of~\eqref{LT.BRWinfinity}. 
\end{proof}

Now that we have Proposition~\ref{prop.localtimesBRW} at hand, the proofs of the upper bounds in Theorem~\ref{thm.BRW} are exactly the same as for Theorem~\ref{theo.generalcluster}.

\subsection{Proof of Theorem~\ref{thm.BRW}: lower bounds} \label{subsec.lowerbound}
We now move to the lower bounds for the intersection of  independent critical BRWs.  
For this we need some new notation. Given two vertices $u,v\in \mathcal T_c$, we write $v\le u$ when $v$ is on the geodesic from $u$ to the root, denoted by $\varnothing$, and write $v<u$ when in addition $v$ is different from $u$. In this case, we set $[v,u) = \{w\in \mathcal T_c : v\le w< u\}$. Also for a set $\Lambda\subset \mathbb Z^d$, we let $\partial \Lambda$ the exterior boundary of $\Lambda$, i.e. the set of vertices not in $\Lambda$ which have at least one neighbor in $\Lambda$. The lower bounds are immediate consequences of the following proposition. 

\begin{proposition}\label{prop.lowerbound}
Let $\rho\in (0,1)$. There exists $c>0$, such that for any $r\ge 1$, and any subset $A\subset B(0,r)$, with $|A|\ge \rho |B(0,r)|$, one has 
$$\mathbb P(|\mathcal R_c\cap A|>  |A|/2) \ge \exp(-c\, r^{d-4}). $$  
\end{proposition}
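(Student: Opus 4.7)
The plan is to construct an explicit event $\mathcal G$ of probability at least $\exp(-cr^{d-4})$ on which $\mathcal R_c$ covers almost all of $B(0,r)$. Specifically, I will show that $|\mathcal R_c \cap B(0,r)| \ge (1-\rho/2)|B(0,r)|$ on $\mathcal G$, which is enough since then
\[
|A \cap \mathcal R_c| \ge |A| - |B(0,r) \setminus \mathcal R_c| \ge |A| - (\rho/2)|B(0,r)| \ge |A|/2
\]
using $|A| \ge \rho|B(0,r)|$. The event $\mathcal G$ is built using the notion of waves from \cite{AS24+}: I force $\mathcal T_c$ to contain a very long spine producing $N \asymp r^{d-4}$ excursions between $B(0,r)$ and $\partial B(0,2r)$, each contributing $\asymp r^4$ new sites to the range in $B(0,r)$.

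Concretely, I first condition $\mathcal T_c$ on having height at least $L := Nr^2 \asymp r^{d-2}$, an event of probability $\asymp 1/L = r^{-(d-2)}$; for $d\ge 5$ this polynomial factor is absorbed in $\exp(-cr^{d-4})$ since $r^{d-4} \gg \log r$. Under this conditioning and a standard size-biased (Lyons--Pemantle--Peres) description, $\mathcal T_c$ admits, up to quantitative corrections, a distinguished spine $v_0,\dots,v_L$ whose positions $X_n := S_{v_n}$ form a simple random walk on $\mathbb Z^d$, with independent critical BGW subtrees attached at each $v_n$. A wave is one excursion of $(X_n)$ from $\partial B(0,2r)$ into $B(0,r)$ and back, and a standard excursion count gives that the spine produces at least $N$ waves with probability $\ge \exp(-cN)$. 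Each wave contains $\asymp r^2$ spine vertices in $B(0,r)$, each carrying an independent critical BGW subtree whose expected range in $B(0,r)$ is $\asymp r^2$ (by $\mathbb E_x[\ell_c(y)] = G(x,y) \asymp \|x-y\|^{-(d-2)}$); a second moment computation using Proposition~\ref{prop.localtimesBRW} shows that the union of these subtree ranges in $B(0,r)$ has size $\asymp r^4$ with positive conditional probability. Aggregating $N$ waves then yields $|\mathcal R_c \cap B(0,r)| \ge c_0 N r^4 \ge (1-\rho/2)|B(0,r)|$ with positive conditional probability (for the constant $C$ in $N = Cr^{d-4}$ chosen large enough), via a further Paley--Zygmund argument applied to the total occupation field $\sum_{v} \ell_c(y)$ on $y \in B(0,r)$.

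The main obstacle will be to control overlaps: both within a single wave, where subtrees rooted at nearby spine vertices can have heavily overlapping ranges, and across different waves. These overlaps must be shown to be subdominant so that the union of ranges has size of the same order as the sum of sizes. Proposition~\ref{prop.localtimesBRW} provides the correct tool through its moment bounds on the joint local times, but the bookkeeping across the spine decomposition---together with the quantitative comparison of $\mathcal T_c$ conditioned on $\{H(\mathcal T_c) \ge L\}$ to a truncated size-biased tree---is the delicate part of the argument.
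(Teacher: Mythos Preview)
Your overall strategy---forcing $\asymp r^{d-4}$ waves and showing each contributes $\asymp r^4$ sites to $B(0,r)$---is the right picture, but it differs from the paper's argument in two substantive ways, and one of your steps has a genuine gap.

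\textbf{Comparison with the paper.} The paper does \emph{not} condition on tree height or invoke a spine decomposition. Its waves are \emph{sets of particles}: $\eta_n$ is the full set of BRW particles entering $B(0,r)$ from outside in the $n$-th wave, and half of them ($\overline\eta_n$) are used to generate $\eta_{n+1}$ while the other half ($\widetilde\eta_n$) are reserved for covering. The cost $\exp(-cr^{d-4})$ then comes from the event $\{|\widetilde\eta_n|\ge \nu r^2 \text{ for all }n\le N\}$, whose conditional probability is a positive constant at each step (by a second moment argument on the wave size, as in \cite{AS24+}). This sidesteps entirely your quantitative comparison with the size-biased tree and your issue of fitting $N$ spine excursions into a prescribed spine length $L=Nr^2$. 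The paper also works directly with $A$ rather than your stronger reduction to $|\mathcal R_c\cap B(0,r)|\ge(1-\rho/2)|B(0,r)|$, which is legitimate but unnecessary.

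\textbf{The gap.} Your aggregation step is the real problem. Applying Paley--Zygmund to the total occupation field $\sum_{y\in B(0,r)}\ell_c(y)$ lower bounds the \emph{sum of local times}, not the \emph{range} $|\mathcal R_c\cap B(0,r)|$; a large occupation sum could come from a few sites visited many times. You flag overlap as the main obstacle, but you do not actually provide a mechanism to control it. The paper resolves this cleanly by never summing overlapping contributions: it defines $\Delta_n=|\widetilde{\mathcal R}_{n-1}^c\cap A\cap \mathcal R(\widetilde\eta_n)|$, the \emph{new} sites of $A$ covered at step $n$, and applies Paley--Zygmund to each $\Delta_n$ separately, conditional on $\mathcal F_n$. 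On $\{\tau>n\}$ there remain at least $|A|/2$ uncovered sites, each hit by the $\asymp r^2$ saved particles with probability $\asymp r^{2-d}$, giving $\mathbb E[\Delta_n\mid\mathcal F_n]\gtrsim r^4$; a straightforward second moment bound on $\Delta_n$ then yields $\mathbb P(\Delta_n>\kappa r^4\mid\mathcal F_n)\ge c_4$. Since the $\Delta_n$ are disjoint increments of $|\widetilde{\mathcal R}_n\cap A|$, summing them gives the range directly. Your scheme can be repaired by adopting exactly this new-sites bookkeeping within each of your spine excursions, but as written the conclusion about range size does not follow from the stated Paley--Zygmund step.
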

\begin{proof}
Fix $\rho\in (0,1)$, and let $r\ge 1$ and $A\subset B(0,r)$, such that $|A|\ge \rho|B(0,r)|$. 
Define then inductively four sequences of vertices of $\mathcal T_c$, denoted $(\eta_n)_{n\ge 0}$, $(\overline{\eta}_n)_{n\ge 0}$, $(\widetilde \eta_n)_{n\ge 0}$ and $(\nu_n)_{n\ge 0}$ as follows. First, $\eta_0=\overline{\eta}_0=\{\varnothing\}$. 
Secondly, for any $n\ge 0$, $\overline \eta_n$ and $\widetilde \eta_n$ are subsets of $\eta_n$, such that $\widetilde \eta_n = \eta_n \setminus \overline \eta_n$. In short, $\widetilde \eta_n$ is a set of saved particles, which will be used for covering purposes, whereas $\overline \eta_n$ is used to create the next wave.  To be more concrete, let $\widetilde \eta_n$ a set of $\lfloor \min(r^2,|\eta_n|/2)\rfloor$ particles of $\eta_n$ chosen uniformly at random. 
Finally, for any $n\ge 0$,  
$$\nu_n = \{ u\in \mathcal T_c : S_u \in \partial B(0,2r), \textrm{ and }\exists v\in \overline{\eta}_n \textrm{ satisfying }v<u \textrm{ and } S_w\in B(0,2r) \ \forall w\in [v,u)\},$$
and
$$\eta_{n+1} =\{ u\in \mathcal T_c : S_u \in  B(0,r), \textrm{ and }\exists v\in \nu_n \textrm{ satisfying }v<u \textrm{ and } S_w\in \mathbb Z^d\setminus B(0,r) \ \forall w\in [v,u)\}.$$
Note that since $\mathcal T_c$ is finite almost surely, all these sets  are empty for $n$ large enough. 
For $n\ge 0$, we define $\mathcal T_n$ as the subtree of  $\mathcal T_c$, whose vertices are those for which there are no vertices in $\eta_n$ on the geodesic between them and the root, except possibly themselves. In other words, identifying $\mathcal T_n$ with its set of vertices by a slight abuse of notation, one has
$$\mathcal T_n = \{u\in \mathcal T_c : [\varnothing,u)\cap \eta_n = \emptyset\}. $$ 
We next define the filtration $(\mathcal F_n)_{n\ge 0}$, by letting for each $n\ge 0$, 
$$\mathcal F_n=\sigma\Big(\mathcal T_n, \widetilde \eta_1,\dots,\widetilde \eta_n, \{(u,S_u) : u\in \mathcal T_n\}\Big).$$
Also, for $u\in \mathcal T_c$, let $$\mathcal R(u)= \{S_v : u\le v \}.$$ 
Then for $n\ge 0$, let 
$$\mathcal R(\widetilde \eta_n) =  \bigcup_{u\in \widetilde \eta_n} \mathcal R(u),\qquad \widetilde{\mathcal R}_n = \bigcup_{k=0}^n \mathcal R(\widetilde \eta_k).$$
Define furthermore,  
$$\tau = \inf\big\{n \ge 0 : |\widetilde{\mathcal R}_n\cap A|> \frac{|A|}{2}\big\}.$$
Let now $\nu\in (0,1)$ and $\kappa>0$ be some constants to be fixed later, and consider for each $n\ge 1$, with $\Delta_n= \big|\widetilde{\mathcal R}_{n-1}^c \cap A \cap \mathcal R(\widetilde \eta_n) \big|$, 
$$A_n = \big\{|\widetilde \eta_n| \ge  \nu \, r^2\big\}, \qquad B_n = \big\{\Delta_n>\kappa\, r^4\big\}.$$ 
We note that for any $n\ge 1$, almost surely on the event $A_n\cap \{\tau>n\}$, one has with $g(r) = r^{2-d}$, and for some constant $c_1>0$, 
\begin{align*}
    \mathbb E\left[\Delta_n \mid \mathcal F_n\right] & = \sum_{x\in \widetilde{\mathcal R}_{n-1}^c \cap A} \mathbb P(x\in \mathcal R(\widetilde \eta_n) \mid \mathcal F_n) \\
    & \ge \frac{|A|}{2} \cdot \Big(1- (1-c_1g(r))^{\nu \, r^2}\Big), 
\end{align*}
using that uniformly over $x,z\in B(0,r)$, the probability that a BRW starting from $z$ hits $x$ is at least $c_1\, g(r)$, for some constant $c_1>0$ (this can be seen using an elementary second moment method, see also~\cite{Zhu} for a finer result). It follows that for some constant $c_2>0$, one has almost surely on $A_n\cap \{\tau>n\}$, 
\begin{equation}\label{lowerDelta1}
\mathbb E\left[\Delta_n \mid \mathcal F_n\right]\ge c_2 \nu\, r^4. 
\end{equation}
Therefore, taking $\kappa = \tfrac{c_2\nu}{2}$, we get that almost surely on $A_n\cap \{\tau>n\}$, 
\begin{equation}\label{lowerDelta2}
\mathbb P(B_n\mid \mathcal F_n) \ge \mathbb P\Big(\Delta_n \ge \frac 12 \mathbb E[\Delta_n\mid \mathcal F_n] \ \Big|\ \mathcal F_n\Big)\ge \frac 14 \cdot \frac{\mathbb E[\Delta_n\mid \mathcal F_n]^2}{\mathbb E[\Delta_n^2\mid \mathcal F_n] }, 
\end{equation}
using Paley-Zygmund's inequality at the end. 
Observe next that uniformly over $u\in \eta_n$, one has for some constant $C>0$, 
$$\mathbb E\big[|\mathcal R(u)\cap B(0,r)|\big]\le C \sum_{x\in B(0,2r)} g(\|x\|) \le C\, r^2,$$
and using the many-to-two formula,  
$$\mathbb E\big[|\mathcal R(u)\cap B(0,r)|^2\big]\le 
C \sum_{x\in \mathbb Z^d} \sum_{y,z\in B(0,2r)}  g(\|x\|)g(\|x-y\|)g(\|z-x\|) \le C\, r^6. $$  
It follows that for some constant $c_3>0$, almost surely 
\begin{equation}\label{upper1}
\mathbb E[\Delta_n^2 \mid \mathcal F_n] \le 
|\widetilde \eta_n|^2 \cdot \sup_{u\in \widetilde \eta_n} \mathbb E\big[|\mathcal R(u)\cap B(0,r)|\big]^2 + |\widetilde \eta_n| \cdot \sup_{u\in \widetilde \eta_n} \mathbb E\big[|\mathcal R(u)\cap B(0,r)|^2\big]\le c_3 \, r^8. 
\end{equation}
Combining~\eqref{lowerDelta1},~\eqref{lowerDelta2} and~\eqref{upper1} we get that almost surely on $A_n\cap \{\tau > n\}$, for some constant $c_4>0$ (depending on $\rho$),
$$\mathbb P(B_n \mid \mathcal F_n) \ge c_4. $$ 
On the other hand, if $\nu$ is sufficiently small, then Paley-Zygmund inequality again yields the existence of a constant $c_5>0$ (depending on $\nu$), such that for $n\ge 1$, almost surely on $A_n$, 
$$\mathbb P(A_{n+1}\mid \mathcal F_n)\ge c_5,$$
and also 
$$\mathbb P(A_1)\ge \frac{c_5}{r^2},$$
see e.g.~\cite[Lemma 10.1]{AS24+} for a more detailed proof. 
To conclude we note that for any $n\ge 1$, conditionally on $\mathcal F_n$, the events $A_{n+1}$ and $B_n$ are independent, and therefore, for any $N\ge 1$, by induction, 
$$\mathbb P\Big( \bigcap_{n=1}^{N\wedge \tau} A_n\cap B_n\Big)\ge   \frac{(c_4c_5)^N}{r^2}. $$ 
Let now $C>0$ be such that $\kappa Cr^d>|A|$. Taking then $N= \lfloor Cr^{d-4}\rfloor$,  we obtain that for some constant $c>0$, 
$$\mathbb P(|\mathcal R_c \cap A|\ge  |A|/2)\ge \exp(-c\, r^{d-4}),$$
as needed. 
\end{proof}
Applying now the result repeatedly, first with $A=B(0,r)$, and then with $A=\mathcal R_c^1\cap\dots\cap \mathcal R_c^j\cap B(0,r)$, for $j\le i-1$, we obtain that 
$$\mathbb P\big(|\mathcal R_c^1\cap \dots \cap \mathcal R_c^i\cap  B(0,r)|\ge \rho\,  r^d\big)\ge \exp(-c\, r^{d-4}), $$
for some (possibly different and depending on $i$) constants $\rho, c>0$. 
Finally taking $r= (t/\rho)^{1/d}$, concludes the proof of the lower bounds in Theorem~\ref{thm.BRW}. \hfill $\square$

\subsection{Proofs of Theorems~\ref{theo.SRW}  and~\ref{theo.SRWBRW}}\label{sec.SRW.BRW}
Concerning the lower bounds, one can use an analogue of Proposition~\ref{prop.lowerbound} for simple random walks. We claim that for any fixed $\rho\in (0,1)$, there exists $c>0$, such that for any $r\ge 1$, any set $A\subset B(0,r)$, with $|A|\ge \rho |B(0,r)|$, and any $t\in (r^2,|A|/2)$, one has 
\begin{equation}\label{lower.SRW}
\mathbb P(|R_\infty \cap A|>t)\ge \exp(-c\, t/r^2). 
\end{equation}
A proof of this statement can be done following exactly the same lines as for Proposition~\ref{prop.lowerbound}, using excursions of a SRW between $B(0,r)$ and $\partial B(0,2r)$, instead of waves. 

\vspace{0.2cm}
Applying the result repeatedly with $r=Ct^{1/d}$, with $C$ some large enough constant, we get that for any $k\ge 2$ and in any dimension $d\ge 5$, 
$$\mathbb P(|R_\infty^1\cap \dots \cap R_\infty^k \cap B(0,Ct^{1/d})|> t) \ge \exp(-c\, t^{1-\frac 2d}),$$
for some constant $c>0$, which already gives the lower bounds in Theorem~\ref{theo.SRW}. Applying additionally  Proposition~\ref{prop.lowerbound}, we get as well the lower bound in Theorem~\ref{theo.SRWBRW}~$(i)$. 

\vspace{0.2cm}
Now applying Proposition~\ref{prop.lowerbound} and~\eqref{lower.SRW} with $r=Ct^{\frac 1{d-2}}$, and $A=\mathcal R_c^1\cap \dots\cap \mathcal R_c^k\cap B(0,r)$, we obtain 
that for some constant $c>0$, and for any $t>1$, in dimension $d>5$,
\begin{equation*}
\mathbb P(|R_\infty\cap \mathcal R_c^1\cap \dots\cap \mathcal R_c^k|>t) \ge \exp(-c\, t^{1-\frac{2}{d-2}}),
\end{equation*}
proving the lower bound in Theorem~\ref{theo.SRWBRW}~$(ii)$. 

\vspace{0.2cm}
Let us consider the upper bounds now. Concerning Theorem~\ref{theo.SRW}, one can notice that by the Markov property, one has for any $d\ge 3$, and any $x_1,\dots,x_n\in \mathbb Z^d$,  
$$\mathbb P(x_1,\dots,x_n\in R_\infty) \le C^n \sum_{\sigma \in \mathfrak S_n} g_{d-2}(0,x_{\sigma(1)})\prod_{i=1}^{n-1} g_{d-2}(x_{\sigma(i)},x_{\sigma(i+1)}). $$ 
Since $|\mathfrak S_n|=n!$, we can then follow exactly the same argument as in the critical cases in the proof of Theorem~\ref{theo.generalcluster}. Next, concerning Theorem~\ref{theo.SRWBRW},  Part $(i)$ follows from the upper bound for two SRWs ranges, and for Part $(ii)$ it suffices to do it for the intersection of one SRW and one BRW ranges. For this one can use 
the same argument as for~\eqref{intersection.twodifferent}, see in particular Remark~\ref{rem.alphabeta}. We leave the details to the reader. 

\section{Intersection of  critical BRWs in low dimension: Proof of Theorem~\ref{BRW.lowdim}}\label{sec.lowdim}
In this Section, we provide upper and lower tail estimates for the distribution of the intersection of two BRW ranges in dimensions $d\le 8$. 
We start with a short heuristic discussion and then
divide this section into three parts. In the first part, we give lower bounds in dimensions $d\le 7$, where the tail distribution has polynomial decay.
In the second part, we establish the upper bounds, still in dimensions $d\le 7$, and finally 
in the third part, we discuss the critical case of dimension eight for which the decay is stretched exponential.

To understand the tail estimates, recall that the range 
$\mathcal R_c$ is a four dimensionsal random set,
so that conditioned on reaching the boundary of a ball $B(0,R)$, it typically fills a positive fraction of it 
in dimension $d\in \{1,2,3\}$,
fills a small fraction of order $1/\log(R)$ of it in dimension four, and covers a small density $\rho_d=R^{4-d}$, in higher dimension.
Thus, in dimension $d\in \{1,2,3\}$, it is enough that both BRWs reach the boundary of a ball of volume of order $t$ 
to produce the desired intersection; in dimension $d\in \{5,6,7\}$, the desired radius 
satisfies $\rho_d^2\cdot R^d=t$, which gives $R^{8-d}=t$; and in dimension four, we need $R^4$ of order $t\cdot \log^2(t)$. Finally, the probability of reaching the boundary of a ball $B(0,R)$ being of order $R^{-2}$ for each BRW, this yields the tail estimates from Theorem~\ref{BRW.lowdim}. 
Now, in dimension eight, which is critical for the intersection of two BRWs, the failure of $R^{8-d}=t$, makes the strategy for the two BRWs unclear. They might either  travel a very large distance (stretched exponential in $t$) to allow enough space for realizing the desired intersection, or produce a stretched exponential number of waves in a smaller ball, or use a mixture of these two strategies.

\subsection{Lower bounds in dimensions $d\le 7$} 
We use a conditional version of the second moment method. To be more precise, we use that for any nonnegative random variable $X$, and any event $A$, one has 
\begin{equation}\label{PZcond}
\mathbb P\Big(X\ge \frac 12 \mathbb E[X\mid A] \ \Big|\ A\Big) \ge \frac 14 \cdot \frac{\mathbb E[X\mid A]^2}{\mathbb E[X^2\mid A]}. 
\end{equation}
Assume first that $d\in \{5,6,7\}$, fix a constant $C>0$ to be specified later, let $r_t=Ct^{\frac 1{8-d}}$ and consider the event 
\begin{equation}\label{eventA}
A = \{\mathcal R_c\cap B(0,r_t)^c\neq \emptyset, \, \widetilde{\mathcal R}_c\cap B(0,r_t)^c\neq \emptyset\},
\end{equation}
that the two BRWs exit $B(0,r_t)$. Let also 
$$X=|\mathcal R_c\cap \widetilde{\mathcal R}_c \cap (B(0,2r_t)\setminus B(0,r_t))|.$$ 
It is well-known (see e.g.~\cite{AS24+}) that $\mathbb P(A) \asymp r_t^{-4}$. Hence, noting also that $X=0$ on $A^c$, one obtains that for some constant $c_1>0$, 
$$\mathbb E[X\mid A] = \frac{\mathbb E[X]}{\mathbb P(A)} \ge c_1 r_t^4 \sum_{x\in B(0,2r_t)\setminus B(0,r_t)} g_{d-2}(0,x)^2 \ge c_1\cdot r_t^{8-d}.$$ 
In particular one can fix $C$ large enough, so that $\mathbb E[X\mid A] \ge 2t$. We aim at applying~\eqref{PZcond} now, but for this it remains to bound the second moment of $X$. Given $x\in \mathbb Z^d$, we let $\ell_c(x) = \sum_{u\in \mathcal T_c} \1\{S_u = x\}$, the local time at $x$ for the critical BRW. Decomposing the event $\{S_u=x,S_v=y\}$ according to all possible positions of the walk at the youngest common ancestor of $u$ and $v$, say $z\in \mathbb Z^d$, 
we get for some constant $c_2>0$ (that might change from line to line),  
\begin{align}\label{2ndmoment}
\nonumber \mathbb E[X^2] & \le \sum_{x,y\in B(0,2r_t)\setminus B(0,r_t)} \mathbb P(x,y\in \mathcal R_c)^2 \le \sum_{x,y\in B(0,2r_t)\setminus B(0,r_t)} \mathbb E[\ell_c(x)\ell_c(y)] \\
\nonumber & \le c_2\sum_{x,y\in B(0,2r_t)\setminus B(0,r_t)}
\Big(\sum_{z\in \mathbb Z^d} g_{d-2}(0,z)g_{d-2}(z,x)g_{d-2}(z,y)\Big)^2\\
& \le c_2 r_t^{-2(d-2)} \sum_{x,y\in B(0,2r_t)} g_{d-4}(x,y)^2  \le c_2\cdot r_t^{12-2d}. 
\end{align}
Then~\eqref{PZcond} gives us that for some $c_3>0$,
$$\mathbb P(X>t) \ge \mathbb P(X>t\mid A)\cdot \mathbb P(A) \ge c_3\cdot  r_t^{-4},$$
which is the desired result.

The proof when $d\in \{1,2,3\}$ is similar. Let this time $r_t = Ct^{1/d}$, with $C>0$ to be fixed, and define $A$ and $X$ as before. We shall use now that $\mathbb P(x \in \mathcal R_c) \asymp \|x\|^{-2}$, see~\cite{LGL} for an even stronger result. Hence the same computation as before shows that 
$\mathbb E[X\mid A]\ge 2t$, if $C$ is chosen large enough. Concerning
the second moment of $X$, simply notice that $X\le |B(0,2r_t)| \cdot \1\{A\}$, whence by an application of~\eqref{PZcond}, we get for some constant $c>0$,  
$$\mathbb P(X\ge t)\ge c\cdot \mathbb P(A) \ge c\cdot t^{-4/d},$$
as wanted.

Let us finally consider the case of dimension four. Let $r_t= Ct^{1/4} \sqrt{\log t}$, with $C>0$ some constant to be fixed. 
Define also again $X$ and $A$ as before. We use that in dimension four, $\mathbb P(x\in \mathcal R_c)\asymp \frac 1{\|x\|^2\log \|x\|}$, as shown by Zhu~\cite{ZhuECP}, see also~\cite{Zhu3} for a finer result. This implies that for some constant $c>0$, 
$$\mathbb E[X\mid A] \ge \frac{c\, r_t^4}{(\log r_t)^2},$$
which is larger than $2t$ if $C$ is chosen large enough. Regarding now the second moment of $X$, one has for any $x,y\in B(0,2r_r)\setminus B(0,r_t)$, denoting by $u\wedge v$ the youngest common ancestor of vertices $u$ and $v$ in $\mathcal T_c$, and using the convention $\frac 10 = 1$, 
\begin{align*}
\mathbb P(x,y\in \mathcal R_c)& \le 
\sum_{z\in \mathbb Z^4} \mathbb P(\exists u,v\in \mathcal T_c : S_u = x, S_v =y, S_{u\wedge v}=z)\\
& \le C\sum_{z\in \mathbb Z^4} \frac{1}{\|z\|^2}\cdot \frac{1}{\|x-z\|^2\log \|x-z\|} \cdot \frac{1}{\|y-z\|^2 \log \|y-z\|}\\
&\le \frac{C}{r_t\|x-y\|\cdot (\log \|x-y\|)^2}, 
\end{align*}
with $C>0$ some constant. Taking the square, and summing over $x,y\in B(0,2r_t)\setminus B(0,r_t)$, we get 
$$\mathbb E[X^2] \le C\frac{r_t^4}{(\log r_t)^4}. $$
Altogether, this gives that for some constant $c>0$, 
$$\mathbb P(X>t) \ge c \cdot \mathbb P(A) \ge \frac{c}{r_t^4},$$
which is the desired result. 

\subsection{Upper bounds in dimensions $d\le 7$}
When $d\in \{1,2,3\}$, the upper bound follows from a simple application of Markov's inequality. Indeed, let $r_t = ct^{1/d}$, with $c$ small enough so that $|B(0,r_t)| \le t/2$. Then we just write, with the same notation as in~\eqref{eventA} for the event $A$, $$\mathbb P(|\mathcal R_c \cap \widetilde R_c|\ge t) \le \mathbb P(A) \le C r_t^{-4}, $$
for some constant $C>0$, which is the desired result.

For $d\in \{5,6,7\}$, set $r_t = t^{\frac 1{8-d}}$. One has for some constant $C>0$, using Markov's inequality, 
\begin{equation}\label{upper.lowdim1}
\mathbb P(|\mathcal R_c\cap \widetilde{\mathcal R}_c\cap B(0,r_t)^c|\ge t/2) \le \frac{2}{t} \sum_{x\in B(0,r_t)^c} \mathbb P(x\in \mathcal R_c)^2 \le \frac{C}{t} r_t^{4-d} \le Ct^{-\frac{4}{8-d}}. 
\end{equation}
On the other hand, if $d=5$, then the second moment computation from~\eqref{2ndmoment}, and Markov's inequality, give with $\mathcal S_i = B(0,2^{-i}r_t)\setminus B(0,2^{-i-1}r_t)$, 
\begin{align*}
\mathbb P(|\mathcal R_c\cap \widetilde{\mathcal R}_c\cap B(0,r_t)|\ge t/2)& \le
\sum_{i\ge 0}
\mathbb P(|\mathcal R_c\cap \widetilde{\mathcal R}_c\cap \mathcal S_i |\ge \sqrt{2}^{-(i+8)}t) \\
& \le \frac{C}{t^2}\sum_{i\ge 0} 2^i \cdot \mathbb E\big[|\mathcal R_c\cap \widetilde{\mathcal R}_c\cap \mathcal S_i |^2\big]\\
& \le \frac{C}{t^2} \sum_{i\ge 0} 2^{-i}r_t^2 \le \frac{C}{t^2} \cdot r_t^2 \le Ct^{-4/3},
\end{align*}
recalling that in dimension five $r_t = t^{1/3}$, for the last inequality.
Together with~\eqref{upper.lowdim1}, this concludes the proof of the upper bound in dimension five. 
If $d=6$, one needs a third moment bound (as the second moment of $X$ is not a growing function of $r_t$). For this one can use~\eqref{LT.BRWc}, and the facts that if $d=6$, 
$$\sup_{x\in \mathbb Z^6} \sum_{z\in B(0,2r)} g_{2(d-4)}(x,z)\le Cr^2, \quad \textrm{and}\quad  \sum_{z\in B(0,2r)\setminus B(0,r)} g_{2(d-2)}(0,z)\le Cr^{-2},$$ 
which altogether show that 
for some constant $C>0$, for any $r\ge 1$,
$$\mathbb E\big[|\mathcal R_c \cap \widetilde{\mathcal R}_c\cap (B(0,2r)\setminus B(0,r))|^3 \big]\le C r^2. $$
Hence, as before, since now $r_t = \sqrt{t}$, (using the same notation for $\mathcal S_i$ as above), 
\begin{align*}
\mathbb P(|\mathcal R_c\cap \widetilde{\mathcal R}_c\cap B(0,r_t)|\ge t/2)& \le
\sum_{i\ge 0}
\mathbb P(|\mathcal R_c\cap \widetilde{\mathcal R}_c\cap \mathcal S_i |\ge \sqrt{2}^{-(i+8)}t) \\
& \le 
\frac{C}{t^3} \cdot r_t^2 \le Ct^{-2}. 
\end{align*}
The argument in dimension seven is similar, except that one needs now to use a fifth moment (as lower moments of $X$ are not growing functions of $r_t$). Namely, we first note that when $d=7$, 
$$\sup_{x\in \mathbb Z^7} \sum_{z\in B(0,2r)} g_{2(d-4)}(x,z)\le Cr, \quad \textrm{and}\quad  \sum_{z\in B(0,2r)\setminus B(0,r)} g_{2(d-2)}(0,z)\le Cr^{-3}.$$ 
Then a similar argument as above gives for $r\ge 1$, 
$$\mathbb E\big[|\mathcal R_c \cap \widetilde{\mathcal R}_c\cap (B(0,2r)\setminus B(0,r))|^5 \big]\le C r, $$
whence we deduce, with $r_t= t$, 
$$\mathbb P(|\mathcal R_c\cap \widetilde{\mathcal R}_c\cap B(0,r_t)|\ge t/2) \le \frac{Cr_t}{t^5} \le Ct^{-4}. $$ 
Finally, if $d=4$, we write with $r_t = t^{1/4}\sqrt{\log t}$,  
$$\mathbb P(|\mathcal R_c\cap \widetilde{\mathcal R}_c\cap B(0,r_t)^c|\ge 1) \le \mathbb P(A) \le Cr_t^{-4},$$
and by the computation from the previous section, 
$$\mathbb P(|\mathcal R_c\cap \widetilde{\mathcal R}_c\cap B(0,r_t)|\ge t)\le \sum_{i\ge 0}
\mathbb P(|\mathcal R_c\cap \widetilde{\mathcal R}_c\cap \mathcal S_i |\ge 2^{-i-1}t) \le C\cdot \frac{r_t^4}{(\log r_t)^4}, 
$$
which altogether proves the desired upper bound.

\subsection{The dimension eight}\label{sec.dim8}
For the lower bound we apply Proposition~\ref{prop.lowerbound} twice with $r=Ct^{1/8}$, and $C$ some constant to be fixed. Indeed, applying it first with $A=B(0,r)$ for the first range $\mathcal R_c$, and then with  $A=B(0,r)\cap \mathcal R_c$, we deduce the desired lower bound provided $C$ is taken large enough. 

\vspace{0.2cm}
As for the upper bound, we present two proofs. The first one is based on our new bounds on the moments of local times. More precisely, 
let $R=\exp(t^{1/3})$. Using the Markov's inequality, we get
$$\mathbb P(| \mathcal R_c\cap \widetilde{\mathcal R}_c\cap B(0,R)^c|\ge 1) \le \mathbb E[| \mathcal R_c\cap \widetilde{\mathcal R}_c\cap B(0,R)^c|]= \sum_{x\in B(0,R)^c} \mathbb P(x\in \mathcal R_c)^2 \le CR^{-4}. $$  
Hence, it just remains to bound the probability that the intersection of the two clusters inside the ball $B(0,R)$ exceeds $t$.
Let $n\ge 1$, and note that Proposition~\ref{prop.localtimesBRW} and Cauchy--Schwarz inequality yield for some constant $C>0$, 
\begin{align*}
\mathbb E\Big[| \mathcal R_c\cap \widetilde{\mathcal R}_c\cap B(0,R)|^n \Big]
& =\sum_{x_1\dots,x_n\in B(0,R)}  \mathbb P(x_1,\dots,x_n\in \mathcal R_c)^2\\
&\le C^n n! \sum_{\mathfrak t\in \mathbb T_n} 
\sum_{x_1\dots,x_n\in B(0,R)} G_{2(d-2)}(\mathfrak t,\underline x),
\end{align*}
with the notation from Proposition~\ref{prop.localtimesBRW}. 
Moreover, for some (possibly larger) constant $C>0$,
$$\sup_{z\in \mathbb Z^8} \sum_{x\in B(0,R)}
 g_{2(d-4)}(z,x)\le C\log R. $$ 
As a consequence, one gets for any $n\ge 1$, and some constant $C>0$, 
$$ \mathbb E\Big[| \mathcal R_c\cap \widetilde{\mathcal R}_c\cap B(0,R)|^n \Big] \le C^n (n!)^2 (\log R)^n,$$
and thus for some positive constant $\kappa$, 
$$\mathbb E\Big[\exp\Big(\kappa  \sqrt{\frac{| \mathcal R_c\cap \widetilde{\mathcal R}_c\cap B(0,R)|}{\log R}}\Big) \Big] <\infty,$$ 
which concludes the proof, using Chebyshev's exponential inequality and the fact that $\sqrt{t/\log R}$ is of order $t^{1/3}$.

Our second proof follows the argument given in Section~\ref{subsec.proofmain}.  
For $i\ge 0$, let $\rho_i = 2^{-i}$, and let $r_i$ be such that 
$\rho_i r_i^4 = C_0 t^{1/3}$, with $C_0$ as in~\eqref{cond.rrho}. Then define the sets $(\Lambda_i)_{i\ge 0}$, as in~\eqref{setsLambdai}, and let $\mathcal E = \{|\Lambda_i|\le \rho_i^{-1}t^{2/3}, \textrm{ for all }i\ge 0\}$. 
Note that the number of relevant indices $i$ is of order $\log R$. Hence following the proof from Section~\ref{subsec.proofmain}, we deduce that on one hand $\mathbb P(\mathcal E^c) \le \exp(-\kappa t^{1/3})$, for some $\kappa>0$, and that on $\mathcal E$, for some $C>0$, 
$$\sup_{x\in \mathbb Z^8} g_{d-4}(x,\mathcal R_c\cap B(0,R))\le C (t^{2/3} + t^{1/3} \log R)\le Ct^{2/3},$$
implying the desired upper bound.

\section{Proof of Theorem~\ref{theo.scenario}}
\label{sec.scenario}
The proof mostly relies on the following result which provides a control on the size of the sets 
$\mathcal R_t(r,\rho)$, defined in~\eqref{Rtrrho}. It can be proved exactly as Corollary~\ref{cor.COR} or~\cite[Theorem 1.6]{AS21}, using the results  of~\cite{ASS23}, in particular Theorem 1.5 and its Corollary 1.6 there, so we omit the proof. 
\begin{theorem}
There exist positive constants $c$ and $C$, such that for any $r$, $\rho$ and $t$, satisfying
$$\rho r^{d-4} \ge C\cdot t^{\frac{d-4}{d-2}},$$
one has for any $L\ge 1$
$$\mathbb P(|\mathcal R_t(r,\rho)|\ge L)\le C\exp(-c\cdot \rho^{4/d}\cdot L^{1-\frac 4d}). $$
\end{theorem}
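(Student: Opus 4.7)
The plan is to mirror the proof of Corollary~\ref{cor.COR} (and its auxiliary Propositions~\ref{lem.unionballs} and~\ref{lem.COR1}), with the exponential moment bound for BRW occupation fields from~\cite{ASS23} playing the role that Theorem~\ref{thm.expmoment} plays for percolation clusters. Throughout, the exponents match up because in the BRW setting the two-point function behaves as $\|x\|^{-(d-2)}$, so the relevant index is $\alpha=d-2$; then $2\alpha-d=d-4$, $\tfrac{2\alpha}{d}-1=1-\tfrac{4}{d}$, and $2(1-\tfrac{\alpha}{d})=\tfrac{4}{d}$, which is exactly what appears in the statement.

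First, using $\mathbb E\bigl[\exp(\kappa \sum_{x\in \mathcal R_\infty}\varphi(x))\bigr]\le 2$ for test functions with $\|g_{d-4}*\varphi\|_\infty\le 1$, and the same smoothing-by-averaging argument as in Proposition~\ref{lem.unionballs}, I would establish: for any $r\ge 1$, any $\rho>0$, and any $2r$-well separated finite $A\subset \mathbb Z^d$,
\begin{equation*}
\mathbb P\bigl(|\mathcal R_\infty \cap B(x,r)|\ge \rho r^d,\ \forall x\in A\bigr)\le \exp\bigl(-\kappa \rho\cdot \mathrm{Cap}_{d-4}(B(A,r))\bigr).
\end{equation*}
Next, following the proof of Proposition~\ref{lem.COR1}, I would prove a \emph{single-slice} estimate for the set $\mathcal R_t^{\flat}(r,\rho):=\{z\in \mathcal R_\infty\cap B(0,R_t) : \rho r^d\le |B(z,r)\cap \mathcal R_\infty|\le 2\rho r^d\}$ with $R_t=\exp(t^{(d-4)/(d-2)})$. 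From $\{|\mathcal R_t^\flat(r,\rho)|>L\}$ one extracts (via the volume bound of Lemma~\ref{lem.densite.1}) a $2r$-separated subfamily of cardinality $n_0\asymp L/(\rho r^d)$; Lemma~\ref{lem.cap.extract} then delivers $U$ with $\mathrm{Cap}_{d-4}(B(U,r))\gtrsim |U|\, r^{d-4}\gtrsim \rho^{4/d-1}L^{1-4/d}$. A union bound over the at most $|B(0,R_t)|^{|U|}\le \exp(c|U|\log R_t)$ choices of $U$, combined with the display above, yields
\begin{equation*}
\mathbb P\bigl(|\mathcal R_t^\flat(r,\rho)|>L\bigr)\le \exp\bigl(-\kappa \rho^{4/d} L^{1-4/d}\bigr),
\end{equation*}
provided the entropy factor $|U|\log R_t$ is absorbed by the capacity-driven exponential decay. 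This is exactly where the standing hypothesis is used: $\rho r^{d-4}\ge C t^{(d-4)/(d-2)}=C\log R_t$ is precisely the analogue of~\eqref{cond.rrho} and makes the absorption work for a large enough constant $C$.

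Finally, I would conclude by dyadic decomposition in the density, as in the proof of Corollary~\ref{cor.COR}. Writing $\mathcal R_t(r,\rho)=\bigcup_{i\ge 0}\mathcal R_t^\flat(r,2^i\rho)$ and choosing $L_i=c_\varepsilon L/2^{\varepsilon i}$ with $\varepsilon\in(0,4/d)$ small (and $c_\varepsilon$ normalising so that $\sum_i L_i\le L$), the hypothesis $\rho r^{d-4}\ge C t^{(d-4)/(d-2)}$ persists for each $2^i\rho$, so the single-slice estimate applies with density $2^i\rho$; a union bound together with geometric summation over $i$ produces the claimed $C\exp(-c\rho^{4/d}L^{1-4/d})$. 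The main (and only) real subtlety is Step~1, namely verifying that the capacity-based hitting bound survives passage to the BRW setting with the correct exponent $d-4$; this is where the exponential moment estimate from~\cite{ASS23}, rather than Theorem~\ref{thm.expmoment}, must be invoked, but once this is in hand every other step is a direct transcription of the percolation argument.
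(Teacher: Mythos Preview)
Your proposal is correct and is precisely the route the paper indicates: the paper omits the proof, stating only that it ``can be proved exactly as Corollary~\ref{cor.COR} or~\cite[Theorem 1.6]{AS21}, using the results of~\cite{ASS23}, in particular Theorem~1.5 and its Corollary~1.6 there''. Your three-step outline (exponential moment input from~\cite{ASS23} in place of Theorem~\ref{thm.expmoment}, then the analogues of Propositions~\ref{lem.unionballs} and~\ref{lem.COR1}, then the dyadic summation of Corollary~\ref{cor.COR}) is exactly this transcription, and your identification of the hypothesis $\rho r^{d-4}\ge C\,t^{(d-4)/(d-2)}$ with condition~\eqref{cond.rrho} is the right one.
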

An immediate consequence of this result, together with the lower bound in Theorem~\ref{theo.SRWBRW} (ii), is that for any given $\rho>0$, one can find $\beta$ and $b$ large enough, so that
$$\lim_{t\to \infty} \mathbb P(|\mathcal R_{\beta t}(\beta t^{\frac 1{d-2}},\rho)|> b t^{\frac{d}{d-2}} \mid |R_\infty\cap \mathcal R_\infty|>t) = 0. $$ 
The second fact we should use is an analogue of Corollary~\ref{cor.1} for the SRW, which says that for any finite set $\Lambda\subset \mathbb Z^d$, with $d\ge 3$, one has 
$$\mathbb P(|R_\infty\cap \Lambda|>t)\le \exp\big(-\frac{c\cdot t}{\sup_{z\in \mathbb Z^d} g_{d-2}(z,\Lambda)}\big),$$
for some constant $c>0$, independent of $\Lambda$, see e.g. Lemma 2.1 in~\cite{AS23}. 
Note also that a simple first moment bound entails, for $\beta$ large enough, 
$$\lim_{t\to \infty} \mathbb P\big(R_\infty\cap \mathcal R_\infty \cap B\big(0,\exp((\beta t)^{\frac{d-4}{d-2}})\big)^c \neq \varnothing\mid |R_\infty\cap \mathcal R_\infty|>t\big)=0. $$ 
Then by decomposing the range $\mathcal R_\infty$ into sets $(\Lambda_i)_{i\ge 0}$, defined as in~\eqref{setsLambdai}, one may infer the second part of Theorem~\ref{theo.scenario}, by showing that for any $\varepsilon>0$, one may find $I>0$, such that asking $R_\infty$ to intersect $(\cup_{i\le I} \Lambda_i)^c$ in more than $\varepsilon t$ points is too costly; see also~\cite{AS23} for a similar argument. Finally, note that by definition, as soon as $\mathcal R_{\beta t}(\beta t^{\frac 1{d-2}},\rho)$ is nonempty, then the volume of $\mathcal R_{2\beta t}(2\beta t^{\frac 1{d-2}},\rho/2^d)$, must be of order at least $t^{d/(d-2)}$, which induces the lower bound in the first part of the theorem.

\section*{Appendix}
In this section we provide proofs of some of the results presented in the introduction and in Section~\ref{sec.cap}  about $\beta$-capacities. 
We start with the proof of a result quoted in the introduction. 

\begin{proof}[Proof of~\eqref{equiv.cap}]
A proof of this equivalence can for instance be found in the unpublished lecture notes~\cite{Khosh}. For the reader's convenience, we reproduce the short argument here. 
Let $\nu_0$ be a probability measure realizing the infimum in the definition of ${\rm Cap}_\beta(A)$. Fix $\eta\in (0,1)$, and let 
$$A_\eta = \big\{x\in A : g_\beta *\nu_0(x) < \frac{1-\eta}{{\rm Cap}_\beta(A)} \big\}. $$
Suppose that $\nu_0(A_\eta)>0$. Then consider the probability measure on $A_\eta$:  
$$\nu_\eta(\cdot) = \frac{\nu_0(\cdot \cap A_\eta)}{\nu_0(A_\eta)}. $$ 
For $\varepsilon\in (0,1)$, define
$$\mu_\varepsilon = (1-\varepsilon)\nu_0 + \varepsilon \nu_\eta. $$ 
 Define also the bilinear map on $\mathbb R^A$: 
$$(\varphi,\psi)\mapsto \langle \varphi,\psi\rangle_\beta = \sum_{x,y\in A} g_\beta(x,y)\varphi(x) \psi(y). $$ 
Writing $\mu_\varepsilon = \nu_0 - \varepsilon(\nu_0 - \nu_\eta)$, one has by bilinearity, 
$$\langle \mu_\varepsilon,\mu_\varepsilon\rangle_\beta
= \langle \nu_0,\nu_0\rangle_\beta - 2\varepsilon  \langle \nu_0,\nu_0-\nu_\eta \rangle_\beta + \varepsilon^2 \langle \nu_0-\nu_\eta,\nu_0-\nu_\eta\rangle_\beta. $$ 
Since by definition of $\nu_0$, one has 
$\langle \mu_\varepsilon,\mu_\varepsilon\rangle_\beta \ge 
 \langle \nu_0,\nu_0\rangle_\beta$, we deduce  
 $ 2\langle \nu_0,\nu_0-\nu_\eta \rangle_\beta \le \varepsilon   \langle\nu_0-\nu_\eta,\nu_0-\nu_\eta\rangle_\beta$. Letting $\varepsilon$ go to $0$, we get that 
 $$\langle\nu_0,\nu_0\rangle_\beta\le \langle\nu_\eta,\nu_0\rangle_\beta.$$
But by definition of $A_\eta$, the right-hand side is no more than $(1-\eta)\langle\nu_0,\nu_0\rangle_\beta$, yielding a contradiction. In conclusion, $\nu_0(A_\eta) = 0$, for any $\eta>0$. It then not difficult to show that $g_\beta *\nu_0(x) = \tfrac 1{{\rm Cap}_\beta(A)}$, for all $x\in A$. Consequently, if $\varphi(x) = {\rm Cap}_\beta(A)\cdot  \nu_0(x)$, one has $\|g_\beta * \varphi\|_\infty \le C$, for some constant $C>0$, independent of $A$, from which we infer 
$$\widetilde {\rm Cap}_\beta(A) \ge \frac 1C \cdot {\rm Cap}(A).$$
The inequality in the other direction is easier. Let $\varphi$ be a function realizing the maximum in the definition of $\widetilde {\rm Cap}_\beta(A)$. Using that $\|g_\beta *\varphi\|\le 1$, we deduce that $\langle \varphi,\varphi\rangle_\beta  \le \widetilde {\rm Cap}_\beta(A)$, and thus for all finite $A$, 
$$\widetilde {\rm Cap}_\beta(A) \le  {\rm Cap}_\beta(A),$$
concluding the proof of~\eqref{equiv.cap}. 
\end{proof}

\begin{proof}[Proof of Lemma~\ref{lem.cap.extract}]
The proof is similar to the proof of Theorem 1.1 in~\cite{AS23b}. The idea is to show, according to the probabilistic method, that a certain random set $\mathcal U$ satisfies the desired properties with positive probability. 
Let $\varphi$ be a maximizing function in the definition of 
$\widetilde{\rm Cap}_\beta\big(\cup_{x\in A} B(x,r)\big)$. For $x\in A$, define $\overline \varphi_x$, by  $$\overline \varphi_x = \frac{c}{r^\beta}\sum_{z\in B(x,r)} \varphi(z),$$
with $c>0$, chosen such that $\overline \varphi_x\le 1$, for all $x\in A$. This is possible since by definition, for each $x\in A$, one has by Lemma~\ref{cap.ball} and~\eqref{equiv.cap}, 
$$\sum_{y\in B(x,r)} \varphi(y) \le \widetilde{\rm Cap}_\beta(B(x,r)) \le Cr^{\beta}.$$
Consider now a sequence $(\xi_x)_{x\in A}$ of independent Bernoulli random variables with respective parameters $(\overline \varphi_x)_{x\in A}$, and define $\mathcal U=\{x\in A: \xi_x=1\}$. 
One has 
$$\mathbb E[| B(\mathcal U,r)|] \ge c' \cdot r^{d-\beta} \cdot {\rm Cap}_\beta(B(A,r)). $$
and 
$$\mathbb Var\big(| B(\mathcal U,r)|\big)\le \mathbb E\big[| B(\mathcal U,r)|\big].$$ 
Hence by Chebyshev's inequality, one has for $r$ large enough, 
\begin{equation}\label{card.U}
\mathbb P\Big(| B(\mathcal U,r)|\ge \frac{c'}{2} \cdot r^{d-\beta} \cdot {\rm Cap}_\beta( B(A,r))\Big) \ge \frac 34.
\end{equation}
It remains to show that with sufficiently high probability 
$${\rm Cap}_{\beta}\big( B(\mathcal U,r)\big)\ge c \cdot r^{\beta-d}\cdot | B(\mathcal U,r) |.$$
To see this, consider $\nu$ the uniform probability measure on $B(\mathcal U,r)$. By definition of $\beta$-capacity, one has 
\begin{equation}\label{lower.capU}
{\rm Cap}_{\beta}\big( B(\mathcal U,r)\big)
\ge \frac{|B(\mathcal U,r)|^2}{\sum_{y,z\in \mathcal B(\mathcal U,r)} g_\beta(y,z)}. 
\end{equation}
Now, taking expectation of the denominator gives 
\begin{align*}
& \mathbb E\Big[\sum_{y,z\in B(\mathcal U,r)}g_\beta(y,z)\Big] 
= \sum_{x,x'\in A} \overline \varphi_x\overline \varphi_{x'} \sum_{y\in B(x,r)} \sum_{z\in B(x',r)} g_\beta(y,z)\\
 & \le C\cdot\sum_{x\in A} r^d \cdot \overline \varphi_x\cdot  \Big(r^{d-\beta}\cdot  g_\beta*\varphi(x) +  r^{d-\beta}\cdot \overline{\varphi}_x \Big)\le C\cdot r^{2(d-\beta)} \cdot \sum_{z\in B(A,r)} \varphi(z) \\
 & \le C\cdot r^{2(d-\beta)} \cdot {\rm Cap}_{\beta}\big( B(A,r)\big). 
\end{align*} 
Therefore, Markov's inequality gives us that 
$$\mathbb P\Big(\sum_{y,z\in B(\mathcal U,r)}g_\beta(y,z)\le 4Cr^{2(d-\beta)}\cdot {\rm Cap}_{\beta}\big( B(A,r)\big) \Big)\ge \frac{3}{4}. $$
Together with~\eqref{card.U} and~\eqref{lower.capU}, we get that 
$$\mathbb P\Big({\rm Cap}_{\beta}\big( B(\mathcal U,r)\big)\ge \frac{(c')^2}{16 C}\cdot {\rm Cap}_{\beta}( B(A,r))\Big) \ge \frac 12,$$
and combining this with~\eqref{card.U} again concludes the proof of the corollary. 
\end{proof}

\end{document}